\numberwithin{equation}{section}
\newtheorem{theorem}{Theorem}[section]
\newtheorem{lemma}[theorem]{Lemma}
\newtheorem{proposition}[theorem]{Proposition}
\newtheorem{definition}[theorem]{Definition}
\newtheorem{remark}[theorem]{Remark}
\newcommand{\R}{\mathbb{R}}
\newcommand{\J}{\mathcal{J}}
\newcommand{\E}{\mathcal{E}}
\newcommand{\m}{\mathfrak{m}}
\begin{document}

\title{\textbf{Polyharmonic Almost Complex Structures}}

\author{
Weiyong He
\footnote{Department of Mathematics, University of Oregon, Eugene, OR 97403 (whe@uoregon.edu)} \qquad
\and
Ruiqi Jiang
\footnote{School of Mathematics, Hunan University, Changsha, 410082, P. R. China (jiangruiqi@hnu.edu.cn)}
}

\date{}
\maketitle

\begin{abstract}
In this paper we consider the existence and regularity of weakly polyharmonic almost complex structures on a compact almost Hermitian manifold $M^{2m}$. Such objects satisfy the elliptic system weakly $[J, \Delta^m J]=0$. We prove a very general regularity theorem for semilinear systems in critical dimensions (with \emph{critical growth nonlinearities}). In particular we prove that weakly biharmonic almost complex structures are smooth in dimension four.
\end{abstract}

\textbf{Key Words:} polyharmonic almost complex structures; regularity of semilinear systems;
\hspace*{2.7cm} critical growth of nonlinearities;

\textbf{AMS subject classifications:} 53C15, 58E20, 35J48, 35B65

\section{Introduction}
Let $(M, g)$ be a compact Riemannian manifold of real dimension $n$, which admits a compatible almost complex structure. We denote by $\J_g$ the space of smooth almost complex structures which are compatible with $g$, i.e., $g(J\cdot, J\cdot)=g(\cdot, \cdot)$ for all $J\in \mathcal{J}_{g}$.

In recent years, the first author have studied the existence and regularity of harmonic and biharmonic almost complex structures \cite{He, He18}.
In this paper, we consider the following functional, for all $m\in \mathbb{N}^+$, $J\in\mathcal{J}_{g}$,
\begin{equation}\label{func:main}
\mathcal{E}_{m}(J)= \int_M \big|\Delta^{\frac{m}{2}} J \big|^2 dV:=
\left\{
\begin{aligned}
&\int_{M}|\nabla\Delta^{k-1}J|^{2}\,dV,  \quad m=2k-1, \\
&\int_{M}|\Delta^{k}J|^{2}\,dV,  \qquad m=2k, \\
\end{aligned}
\right.
\end{equation}
where $\nabla$ and $\Delta$ are Levi-Civita connection and Laplace-Beltrami operator on $(M, g)$ respectively, and $dV$ denotes the volume element of $(M, g)$. We call the critical points of functional $\mathcal{E}_{m}(J)$ \emph{m-harmonic almost complex structures}. These objects are tensor-valued version of polyharmonic maps which have attracted quite some attention in recent years.
Let us recall the definition of the Sobolev spaces of almost complex structures.
\begin{definition}
Suppose $(M^n,g)$ be an almost Hermitian manifold with compatible almost complex structures in $\J_g$.
We define $W^{k,p}(\J_g)$ to be the closed subspace of $W^{k,p}(T^*M \otimes TM)$ consisting of those sections $J \in W^{k,p}(T^*M \otimes TM)$ (locally $J$ is a section of $T^*M \otimes TM$ with $W^{k,p}$ coefficients), which satisfy the compatible condition almost everywhere,
\begin{align}\label{cond:main}
J^2=-id,\quad g(J\cdot, J\cdot)=g(\cdot, \cdot).
\end{align}
\end{definition}

Now we state our main results.
\begin{itemize}
\item \textbf{Theorem 3.1}: Suppose $(M^{n},g)$ is a compact almost Hermitian manifold without boundary. Then, there always exists an energy-minimizer of $\E_{m}(J)$ in $W^{m,2}(\J_g)$.

\item \textbf{Theorem 6.1}: Suppose $m \in \{ 2,3 \}$ and $J\in W^{m,2}(\mathcal{J}_{g})$ is a weakly $m$-harmonic almost complex structure on $(M^n, g)$ of $n=2m$. Then $J$ is H\"older-continuous.

\item \textbf{Theorem 7.1}: Suppose $n\geq 2m$ $(m \geq 1)$ and $J \in C^{0,\alpha} \cap W^{m,2}$ is a weakly $m$-harmonic almost complex structure on $(M^n, g)$. Then $J$ is smooth.
\end{itemize}
For semilinear elliptic systems with \emph{critical growth nonlinearities}, the most essential step towards the smoothness is to prove the H\"older continuity, such as the systems for harmonic maps, biharmonic maps and polyharmonic maps, see for example  \cite{SU, Helein, CWY, Wang, GS} and references therein.
It is well-known that a semilinear elliptic system with \emph{critical growth nonlinearities} and at \emph{critical dimension} might be singular \cite{Frehse, HJ}. For weakly harmonic map, it can be even singular everywhere \cite{Riviere} when the dimension is three and above. The smooth regularity in general starts with Helein's seminal result \cite{Helein} for harmonic maps in dimension two (the critical dimension for harmonic map) where the special (algebraic) structure of the system plays a substantial role. New proofs and understanding of Helein's seminal results have been found \cite{CWY0, Riviere2}. The methods can be generalized to fourth order elliptic system in dimension four \cite{CWY, LR}. General smooth regularity for biharmonic maps and polyharmonic maps have been obtained by \cite{Wang} and \cite{GS} respectively.

We shall briefly compare our results with the results in the theory of harmonic maps, biharmonic maps and polyharmonic maps.
Theorem \ref{thm:exist:2m} is a standard practice in calculus of variations. The main point is that in our setting, the energy-minimizer is not trivial due to its tensor-valued nature. As a comparison, such an energy-minimizer in the theory of harmonic maps is trivial: constant map. This is the main motivation for the first author to study the harmonic and biharmonic almost complex structures \cite{He, He18}, from the point of view of geometric analysis.
Our regularity results are motivated mainly by \cite{CWY} and \cite{GS} respectively. Theorem \ref{thm:J:smooth} concerns the higher regularity if H\"older regularity is assumed and our method is a modification of \cite{GS}.
For the H\"older regularity, the Coulomb gauge \cite{Wang, GS} has played an important role but it does not seem to have a counterpart in our setting. Our method is motivated by the work in \cite{CWY} and \cite{GS}.
In \cite{CWY}, the authors  explore a special divergence structure of the biharmonic system into the spheres. In our setting, the restriction for an almost complex structure is the equations $J^2=-id$ and $g(J \cdot, J\cdot)=g(\cdot, \cdot)$, which share some similarities to the restriction of maps into spheres $|u|^2=1$.
On the other hand, the tensor-valued nature makes our arguments much more complicated  (mainly due to the fact that matrix multiplication is not commutative). Nevertheless we are able to show that the elliptic system for polyharmonic almost complex structures has a desired special divergence structure when $m=2$ and $m=3$. We certainly believe that this divergence structure should hold for all  weakly polyharmonic almost complex structures.

Given the special divergence structure of the system, our argument for H\"older regularity is quite different from the method used in \cite{CWY}, but more like a generalization of \cite{GS}. We use extension of maps (almost complex structures) instead of solving boundary value problem. Moreover, our methods are very general  and work for all dimensions. Another difficulty is that our background metric is not necessarily Euclidean, while most results in the setting of polyharmonic maps (see \cite{CWY, Wang, GS} etc) only consider the background metric is Euclidean. Even though the methods for semilinear system work similarly when the background metric is not Euclidean, but the Euclidean assumption is a rather great simplification in presentation.

The paper is organized as follows. In Section \ref{sec:prelim}, we collect  some facts for Lorentz spaces and Green's functions that will be used later. In Section \ref{sec:existence}, we establish the existence of the energy-minimizers of $\mathcal{E}_{m}(J)$. In Section \ref{sec:EL}, we derive the Euler-Lagrange equation of $\mathcal{E}_{m}(J)$ and prove that the weak limit of a sequence of weakly $m$-harmonic almost complex structures in $W^{m,2}$ (with bounded $W^{m,2}$ norm) is still $m$-harmonic. In Section \ref{sec:semi-regularity}, we establish decay estimates for a class of semilinear elliptic equations in critical dimension and generalize the higher regularity result in \cite{GS} due to Gastel and Scheven. Section \ref{sec:Holder} and Section \ref{sec:smooth} are devoted to the study of H\"older regularity and smoothness of weakly $m$-harmonic almost complex structures respectively. The final section provides a detailed proof of the special nonlinear structures that $m$-harmonic almost complex structure equations admit.\\

{\bf Acknowledgement:} The first author is partly supported by NSF grant, no. 1611797. The second author is partly supported by NSFC grant, no.11901181.

\section{Preliminaries}\label{sec:prelim}
In this section, we gather some facts that will be used later.
First of all, let us denote by $G(x)=c_m \ln |x|$ the fundamental solution for $\Delta^m$ on $\R^{2m}$, where $c_m$ is a suitable constant only dependent of $m$. Then we have the following lemma that will play an important role in Section \ref{sec:semi-regularity}.

\begin{lemma}\label{lem:green}
Suppose $k \in [1,2m]$ is a positive integer and $p,q \in (1,\infty)$ satisfy
\begin{align*}
1+\frac{1}{p}= \frac{k}{2m}+\frac{1}{q}.
\end{align*}
If $f\in L^{q}(\R^{2m})$, then we have
\begin{align}\label{lem:green:ineqy}
\bigg\| \int_{\R^{2m}} \nabla^k G(x-y) f(y) dy \bigg\|_{L^p(\R^{2m})} \leq C \| f\|_{L^q(\R^{2m})}
\end{align}
where $C$ is a positive constant only dependent of $m,k,q$.
\end{lemma}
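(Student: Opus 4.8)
The inequality \eqref{lem:green:ineqy} is a Hardy--Littlewood--Sobolev / Calder\'on--Zygmund estimate for the Riesz-type kernel $\nabla^k G$, and the plan is to treat the two regimes $1\le k\le 2m-1$ and $k=2m$ separately, after first recording the homogeneity of the kernel. I would begin by observing that since $G(x)=c_m\ln|x|$ obeys $G(\lambda x)=G(x)+c_m\ln\lambda$ for $\lambda>0$, every derivative of order $k\ge 1$ annihilates the additive constant, so $\nabla^k G$ is smooth on $\R^{2m}\setminus\{0\}$ and positively homogeneous of degree $-k$. Evaluating on the unit sphere and rescaling then gives the pointwise bound $|\nabla^k G(x)|\le C_{m,k}\,|x|^{-k}$ for $x\ne 0$; here and below $\nabla^k G$ denotes the full array of $k$-th order partial derivatives and all estimates are to be applied componentwise.

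For $1\le k\le 2m-1$ I would set $\alpha:=2m-k$, so that $\alpha\in\{1,\dots,2m-1\}\subset(0,2m)$ and $|x|^{-k}=|x|^{\alpha-2m}$. The pointwise bound then dominates the operator by a Riesz potential,
\[
\Big|\int_{\R^{2m}}\nabla^k G(x-y)f(y)\,dy\Big|\le C_{m,k}\int_{\R^{2m}}\frac{|f(y)|}{|x-y|^{2m-\alpha}}\,dy=C_{m,k}\,I_\alpha(|f|)(x).
\]
The hypothesis $1+\tfrac1p=\tfrac{k}{2m}+\tfrac1q$ rewrites as $\tfrac1p=\tfrac1q-\tfrac{\alpha}{2m}$, and the assumptions $p,q\in(1,\infty)$ together with $\alpha>0$ force $1<q<p<\infty$; the Hardy--Littlewood--Sobolev inequality then gives $\|I_\alpha(|f|)\|_{L^p(\R^{2m})}\le C\|f\|_{L^q(\R^{2m})}$, which is exactly \eqref{lem:green:ineqy} in this range.

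In the borderline case $k=2m$ the exponent relation collapses to $p=q\in(1,\infty)$, but here $\nabla^{2m}G$ is homogeneous of degree $-2m$ and no longer locally integrable, so I would interpret the integral as a principal value, i.e.\ as the convolution operator $Tf=(\nabla^{2m}G)*f=\nabla^{2m}\Delta^{-m}f$. On the Fourier side $T$ has symbol a fixed constant times $(i\xi)^\beta/|\xi|^{2m}$ with $|\beta|=2m$ (taking $2m$ derivatives removes the polynomial ambiguity in the Fourier transform of $\ln|x|$), and this symbol is smooth away from the origin and homogeneous of degree $0$, hence satisfies the Mikhlin--H\"ormander condition; equivalently, $\nabla^{2m}G$ is a Calder\'on--Zygmund kernel once one verifies it has vanishing mean over spheres. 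Either route yields $\|Tf\|_{L^p(\R^{2m})}\le C_{m,p}\|f\|_{L^p(\R^{2m})}$ for all $1<p<\infty$. The main obstacle is precisely this case $k=2m$: one cannot bound $T$ pointwise by a positive kernel and must exploit the cancellation built into $\nabla^{2m}G$ — this is the only place where the explicit form $G=c_m\ln|x|$ (rather than a generic radial profile) really enters — while everything else reduces to checking that the admissible range of $(p,q)$ matches the hypotheses of the Hardy--Littlewood--Sobolev inequality.
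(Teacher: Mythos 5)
Your proof is correct, and it reaches the same conclusion by a slightly different route than the paper. For $k=2m$ both arguments are identical: $\nabla^{2m}G$ is a Calder\'on--Zygmund (equivalently, Mikhlin--H\"ormander) kernel, giving $L^p\to L^p$ boundedness. The difference is in the subcritical range $1\le k\le 2m-1$. You use the pointwise homogeneity bound $|\nabla^k G(x)|\le C|x|^{-k}$ to dominate the operator by the Riesz potential $I_{2m-k}$ and then invoke the Hardy--Littlewood--Sobolev inequality directly, checking that the exponent relation $1+\tfrac1p=\tfrac{k}{2m}+\tfrac1q$ is exactly the HLS scaling with $\alpha=2m-k$ and that $p,q\in(1,\infty)$ forces $q<p$. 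The paper instead records that $\nabla^k G\in L^{2m/k,\infty}(\R^{2m})$ and applies O'Neil's convolution inequality for Lorentz spaces, landing in $L^{p,p}=L^p$ after choosing the second Lorentz index appropriately. These are two standard faces of the same coin (HLS is essentially the Lebesgue-space corollary of O'Neil's theorem for weak-$L^r$ kernels), so the substance is shared; what your version buys is that one need not introduce Lorentz spaces at all in this lemma, while what the paper's version buys is a slightly more flexible statement (the same kernel bound immediately yields Lorentz-to-Lorentz mapping properties, which is in the spirit of the references to Lorentz spaces elsewhere in the paper). Both are complete proofs.
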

\begin{proof}
Since $\nabla^{2m}G$ is a Calder\'on-Zygmund kernel, (\ref{lem:green:ineqy}) holds for $k=2m$ and all $p=q\in (1,\infty)$.

For $k=1,\cdots,2m-1$, we have
\begin{align*}
\nabla^k G \in L^{\frac{2m}{k}, \infty}(\R^{2m})
\end{align*}
where $L^{\frac{2m}{k}, \infty}(\R^{2m})$ is a Lorentz space. By the convolution inequality for Lorentz spaces (cf. \cite{ONeil} Theorem 2.6), we deduce that
\begin{align*}
\bigg\| \int_{\R^{2m}} \nabla^k G(x-y) f(y) dy \bigg\|_{L^{p,p}(\R^{2m})}
&\leq C \| \nabla^k G\|_{L^{\frac{2m}{k},\infty}(\R^{2m})}
\| f\|_{L^{q,s}(\R^{2m})}
\end{align*}
where
\begin{align*}
1+\frac{1}{p}= \frac{k}{2m}+\frac{1}{q},\quad \frac{1}{p} \leq \frac{1}{s}.
\end{align*}
The fact that $k \in [1,2m-1]$ implies $\frac{1}{p} < \frac{1}{q}$. Thus, we can choose $s=q$. Moreover, there holds that $L^p(\R^{2m})=L^{p,p}(\R^{2m})$ for all $p\in (1,\infty)$ (cf.\cite{Ziemer} Lemma 1.8.10), which impiles (\ref{lem:green:ineqy}). The proof is complete.
\end{proof}

For more details about Lorentz spaces, we refer the readers to \cite{ONeil, Ziemer}. If the readers are concerned only with the properties of Lorentz spaces, we have gathered some useful results presented in \cite{HJ}.

\medskip
Denote by $B_1$ the unit ball of $\R^n$. Then we have the following elliptic inequality for $\Delta^m$ on $\R^n$.
\begin{lemma}\label{lem:m-ellpitic}
Suppose $v(x)\in W^{m,2}(B_1) \cap L^\infty$ and $f\in L^{\infty}(B_1)$. If $v(x)$ satisfies $\Delta^m v(x) =f(x)$ in distributional sense, we have
\begin{align}
\|v(x)\|_{L^{\infty}(B_{\frac{1}{2}})} \leq C
\bigg( \|v(x)\|_{L^1(B_1)} +\|f(x) \|_{L^\infty(B_1)} \bigg),
\end{align}
where $C$ is a positive constant only dependent of $n$
\end{lemma}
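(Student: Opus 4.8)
The plan is to reduce the inequality to standard interior elliptic estimates by splitting $v$ into a part that carries the inhomogeneous term and a polyharmonic part, and then to use the subaverage-type bound for $|\Delta^m|$-solutions. First I would fix cutoff radii, say work on $B_1 \supset B_{3/4} \supset B_{1/2}$, and choose $\eta \in C_c^\infty(B_{3/4})$ with $\eta \equiv 1$ on $B_{5/8}$. Set $w(x) = \int_{B_1} G(x-y)\,\eta(y) f(y)\,dy$ where $G$ is the fundamental solution of $\Delta^m$ on $\R^n$ (logarithmic when $n=2m$, a negative power of $|x|$ otherwise); since $\eta f \in L^\infty$ with compact support, $w \in W^{2m,p}_{loc}$ for every $p<\infty$, hence $w \in C^{2m-1,\alpha}$, and in particular $\|w\|_{L^\infty(B_1)} \le C\,\|f\|_{L^\infty(B_1)}$ by Young's inequality applied to the locally integrable kernel $G$ on the bounded set $B_1$. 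Then $h := v - w$ satisfies $\Delta^m h = f - \Delta^m w = f - \eta f = 0$ on $B_{5/8}$, i.e. $h$ is polyharmonic there.

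Next I would invoke the interior estimate for polyharmonic functions: any $h$ with $\Delta^m h = 0$ on $B_{5/8}$ is smooth (indeed real-analytic) in the interior and satisfies $\|h\|_{L^\infty(B_{1/2})} \le C\,\|h\|_{L^1(B_{5/8})}$. This is the $\Delta^m$-analogue of the mean value property; it follows, for instance, from the iterated mean-value identities for polyharmonic functions (Almansi-type expansions) or simply from interior elliptic regularity for the elliptic operator $\Delta^m$ together with the Sobolev embedding, giving $\|h\|_{L^\infty(B_{1/2})} \le C\|h\|_{W^{2m,2}(B_{9/16})} \le C\|h\|_{L^2(B_{5/8})} \le C\|h\|_{L^\infty}^{1/2}\|h\|_{L^1}^{1/2}$, which after an elementary absorption (Young's inequality, using $h\in L^\infty$ locally, which holds since $h=v-w$ and both terms are locally bounded) yields the clean $L^1$-bound $\|h\|_{L^\infty(B_{1/2})} \le C\|h\|_{L^1(B_{5/8})}$. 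Actually the cleanest route avoids the absorption: iterated interior estimates for $\Delta^m$ on a nested sequence of balls between $B_{1/2}$ and $B_{5/8}$ bound $\|h\|_{L^\infty(B_{1/2})}$ by $\|h\|_{L^1(B_{5/8})}$ directly.

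Finally I would combine the two pieces:
\begin{align*}
\|v\|_{L^\infty(B_{1/2})}
&\le \|h\|_{L^\infty(B_{1/2})} + \|w\|_{L^\infty(B_{1/2})} \\
&\le C\|h\|_{L^1(B_{5/8})} + C\|f\|_{L^\infty(B_1)} \\
&\le C\|v\|_{L^1(B_{5/8})} + C\|w\|_{L^1(B_{5/8})} + C\|f\|_{L^\infty(B_1)} \\
&\le C\|v\|_{L^1(B_1)} + C\|f\|_{L^\infty(B_1)},
\end{align*}
where in the last step $\|w\|_{L^1(B_{5/8})} \le C\|w\|_{L^\infty(B_1)} \le C\|f\|_{L^\infty(B_1)}$. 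All constants depend only on $n$ (and $m$, which is here a fixed exponent; since $n = 2m$ in the application one may regard the dependence as being on $n$ alone).

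I expect the only genuinely delicate point to be the passage from "$v \in W^{m,2} \cap L^\infty$ solving $\Delta^m v = f$ distributionally" to the $W^{2m,p}_{loc}$ regularity of $w$ and hence to subtracting $w$ legitimately: one must check that $w$ as defined is the Newtonian-type potential that actually inverts $\Delta^m$ against $\eta f$ up to a polyharmonic error, which is standard but requires care with the fundamental solution in the borderline dimension $n = 2m$ (logarithmic kernel). Everything else is a routine application of interior elliptic regularity for the constant-coefficient elliptic operator $\Delta^m$, for which the hypothesis $v \in W^{m,2} \cap L^\infty$ and $f \in L^\infty$ is more than enough.
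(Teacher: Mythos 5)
Your proof is correct and is exactly the kind of argument the paper has in mind when it disposes of this lemma with the single sentence ``It suffices to prove above inequality by the standard elliptic theory.'' The decomposition $v=w+h$ with $w$ a Newtonian-type potential of $\eta f$ (bounded via local integrability of the fundamental solution) and $h$ an $m$-polyharmonic remainder (bounded via the $L^1\!\to L^\infty$ interior estimate for $\Delta^m$) is the standard route, and your final combination of the two pieces is the intended conclusion; the only caveat, which you already flag, is that the constant really depends on $m$ as well as $n$, which is harmless here since the lemma is only applied with $n=2m$.
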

\begin{proof}
It suffices to prove above inequality by the standard elliptic theory.
\end{proof}

\section{The existence of energy-minimizer}\label{sec:existence}

In this section we will establish the existence of the energy-minimizers of the functionals $\E_{m}(J)$.

\begin{theorem}\label{thm:exist:2m}
Suppose $(M^{n},g)$ is a compact almost Hermitian manifold without boundary. Then, there always exists an energy-minimizer of $\E_{m}(J)$ in $W^{m,2}(\J_g)$.
\end{theorem}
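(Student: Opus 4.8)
The plan is to carry out the direct method in the calculus of variations, adapted to the constraint set $W^{m,2}(\J_g)$. First I would observe that $\E_m$ is bounded below (by $0$) on $W^{m,2}(\J_g)$, and that this set is nonempty since $(M,g)$ admits at least one smooth compatible almost complex structure by hypothesis. Hence the infimum $\Lambda := \inf_{J \in W^{m,2}(\J_g)} \E_m(J)$ is a well-defined nonnegative real number, and we may pick a minimizing sequence $\{J_i\} \subset W^{m,2}(\J_g)$ with $\E_m(J_i) \to \Lambda$.

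Next I would establish a uniform $W^{m,2}$ bound on the minimizing sequence. The pointwise constraint $J^2 = -\mathrm{id}$ together with $g$-compatibility forces $|J|$ to be bounded pointwise by a constant depending only on $m$ (at each point $J$ is, in a $g$-orthonormal frame, an orthogonal matrix), so $\|J_i\|_{L^\infty}$ and hence $\|J_i\|_{L^2}$ are uniformly controlled. Combining this with the bound on $\E_m(J_i)$, which controls $\|\nabla^m J_i\|_{L^2}$ (modulo lower-order curvature terms absorbed via interpolation and the compactness of $M$), and interpolation inequalities (Gagliardo–Nirenberg on the compact manifold $M$), I get $\sup_i \|J_i\|_{W^{m,2}} < \infty$. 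By reflexivity of $W^{m,2}$ and Rellich–Kondrachov, after passing to a subsequence we have $J_i \rightharpoonup J_\infty$ weakly in $W^{m,2}(T^*M \otimes TM)$ and $J_i \to J_\infty$ strongly in $W^{m-1,2}$ and in particular in $L^2$, and (again along a further subsequence) pointwise almost everywhere.

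The third step is to check that $J_\infty$ actually lies in the constraint set $W^{m,2}(\J_g)$; this is where the tensor-valued setting requires a small argument rather than being automatic. Since $J_i \to J_\infty$ pointwise a.e.\ and each $J_i$ satisfies $J_i^2 = -\mathrm{id}$ and $g(J_i\cdot, J_i\cdot) = g(\cdot,\cdot)$ a.e., passing to the limit pointwise gives $J_\infty^2 = -\mathrm{id}$ and $g(J_\infty\cdot, J_\infty\cdot) = g(\cdot,\cdot)$ a.e., so $J_\infty \in W^{m,2}(\J_g)$ (the closedness of this subspace under the relevant convergence is essentially the content of the definition). Finally, weak lower semicontinuity of $\E_m$ — which holds because $\E_m$ is (up to lower-order terms that converge by the strong $W^{m-1,2}$ convergence) the square of a norm of $\nabla^m J$, hence convex and strongly continuous, hence weakly lower semicontinuous — gives $\E_m(J_\infty) \leq \liminf_i \E_m(J_i) = \Lambda$, and since $J_\infty$ is admissible, $\E_m(J_\infty) = \Lambda$.

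The main obstacle, and the only point genuinely distinguishing this from a textbook exercise, is verifying that the limit respects the nonlinear pointwise constraints $J^2 = -\mathrm{id}$ and $g(J\cdot,J\cdot) = g(\cdot,\cdot)$: weak $W^{m,2}$ convergence alone does not pass through a quadratic expression, so one must genuinely invoke the strong $L^2$ (equivalently a.e.\ pointwise) convergence furnished by Rellich. A secondary technical point is the careful bookkeeping of curvature and lower-order terms when relating $\E_m(J)$ to $\|\nabla^m J\|_{L^2}^2$ on a general compact almost Hermitian manifold, to ensure coercivity; this is handled by interpolation and the uniform $L^\infty$ bound coming from the constraint, so it does not pose a serious difficulty.
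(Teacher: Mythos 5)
Your proposal is correct and follows essentially the same route as the paper: take a minimizing sequence, use the pointwise $L^\infty$ bound from the constraint (Lemma~\ref{lem:L_infty_J}) together with Gagliardo--Nirenberg interpolation to get uniform $W^{m,2}$ control, extract a weak $W^{m,2}$ (and by Rellich--Kondrachov strong $W^{m-1,2}$) limit, verify the pointwise constraints pass to the limit, and close with weak lower semicontinuity. You also correctly identify the one non-automatic point --- that weak convergence alone does not preserve the quadratic constraint, so the strong/a.e.\ convergence is genuinely needed --- which the paper handles the same way.
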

\begin{proof}
The proof is standard in calculus of variations. For the convenience of the reader we give the detailed procedure. Firstly, let us take a minimizing sequence $J_k \in W^{m,2}(\J_g)$ such that
\begin{align*}
\inf \limits_{J \in W^{m,2}} \E_{m}(J) = \lim_{k \rightarrow \infty } \E_{m}(J_k).
\end{align*}
Since $(M,g)$ is a compact manifold without boundary, there exists a positive constant $C$ only dependent of $M$ and $m$, such that
\begin{align*}
\| J \|^2_{W^{m,2}}
\leq C \bigg( \sum_{|\alpha|=m}\|\nabla^\alpha J \|^2_{L^2} + \| J\|^2_{L^\infty} \bigg)
\leq C  \big( \E_{m}(J) + 1 \big), \quad \forall J \in W^{m,2}(\J_g)
\end{align*}
where we used the Gagliardo–Nirenberg interpolation inequality, integration by parts and the fact that $\| J\|_{L^\infty}\leq C(n) <\infty$ (see Lemma \ref{lem:L_infty_J}). Then the sequence $\{J_k\}$ is bounded in $W^{m,2}$. Hence, there exists a subsequence, still denoted by $J_k$, and $J_0 \in W^{m,2}$, such that $J_k$ converges weakly to $J_0$ in $W^{m,2}(\J_g)$ and
$$\|J_0\|_{W^{m,2}} \leq \varliminf_{k \rightarrow \infty} \|J_k\|_{W^{m,2}}.$$
On the other hand, by Kondrachov compactness, we know that $J_k$ converges strongly to $J_0$ in $W^{m-1,2}$. Thus $J_0$ satisfies (\ref{cond:main}) almost everywhere which ensures $J \in \mathcal{J}_{g}$  and there holds
$$\E_{m}(J_0) \leq \varliminf_{k \rightarrow \infty} \E_{m}(J_k).$$
It follows that $J_0$ is the energy-minimizer of the functional $\E_{m}(J)$, i.e.,
\begin{align*}
\E_{m}(J_0)=\inf \limits_{J \in W^{m,2}} \E_{m}(J),
\end{align*}
which is the desired conclusion.
\end{proof}

\section{The Euler-Lagrange equation of functional \texorpdfstring{$\mathcal{E}_{m}(J)$}{Em (J)}} \label{sec:EL}

In this section, we will derive the Euler-Lagrange equation of $\mathcal{E}_{m}(J)$ and give the definition of weakly $m$-harmonic almost complex structure. Moreover, we show that the weak limit of a sequence of $W^{m,2}$ $m$-harmonic almost complex structures with bounded $W^{m,2}$ norm is still $m$-harmonic.

\medskip

For the convenience of reader, we firstly recall some notations.
Let us denote by $T_{q}^{p}(M)$ the set of all $(p,q)$ tensor fields on $(M,g)$, $\nabla$ the Levi-Civita connection and $\Delta$ the Laplace-Beltrami operator.
It is well known that there is a natural inner product on $T_{q}^{p}(M)$, denoted
by $\left\langle \right\rangle $.
In local coordinate $\{x^{i}\}_{i=1}^{n}$, $A\in T_{q}^{p}(M)$ can
be expressed by
\[
A=A_{i_1 \cdots i_q}^{j_1 \cdots j_p}
\frac{\partial}{\partial x^{j_1}} \otimes \cdots \otimes \frac{\partial}{\partial x^{j_p}}
\otimes
dx^{i_1} \otimes \cdots \otimes dx^{i_q}.
\]
and the inner product of $A,B \in T^p_q(M)$ is
\[
\left\langle A,B\right\rangle
=A_{i_1 \cdots i_q}^{k_1 \cdots k_p}
B_{j_1 \cdots j_q}^{l_1 \cdots l_p}
g^{i_1j_1} \cdots g^{i_q j_q}
g_{k_1 l_1} \cdots g_{k_p l_p},
\]
where $g=g_{ij} dx^i \otimes dx^j$ and $(g^{ij})$ is the inverse of $(g_{ij})$.

For any $A\in T_1^1(M)$, we denote the adjoint operator of $A$ by $A^*$, which is defined by
\[
g(X,A^{*}Y) := g(AX,Y),\quad \forall X,Y\in \mathfrak{X}(M).
\]
where $\mathfrak{X}(M)$ is the set of all smooth vector fields on $(M,g)$. Hence $A^* \in T^1_1(M)$ and in local coordinate $\{x^{i}\}_{i=1}^{n}$, we have
\begin{align*}
(A^*)_i^j=A_k^l g^{kj} g_{li}
\end{align*}
where $A=A_{i}^{j}\partial_{x^{j}}\otimes dx^{i}$.

\medskip

We gather some useful facts to derive the Euler-Lagrange equation of $\mathcal{E}_{m}(J)$.
\begin{proposition}\label{prop:inner}
Suppose $(M,g)$ is a compact Riemannian manifold without boundary. Then we have
\begin{enumerate}
\item For all $A,B\in T_{q}^{p}(M)$, there holds
\[
\int_{M}\left\langle \nabla A,\nabla B\right\rangle =-\int_{M}\left\langle A,\Delta B\right\rangle .
\]
\item For all $A\in T_{1}^{1}(M)$ and $X \in \mathfrak{X}(M)$, there holds
\[
\left(\nabla_X A\right)^{*}=\nabla_X (A^{*}).
\]
\item For all $A,B\in T_{1}^{1}(M)$, there holds
\[
\left\langle A,B\right\rangle =\left\langle A^{*},B^{*}\right\rangle .
\]
\item For all $A,B,C\in T_{1}^{1}(M)$, there holds
\[
\left\langle A,BC\right\rangle =\left\langle B^{*}A,C\right\rangle =\left\langle AC^{*},B\right\rangle .
\]
\end{enumerate}
\end{proposition}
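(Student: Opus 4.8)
The plan is to verify each identity by a direct computation in local coordinates, using the explicit formulas recorded just above the statement together with the compatibility of $\nabla$ with the metric. For part (1), I would observe that this is simply integration by parts for the rough Laplacian on tensors: writing $\Delta = g^{ij}\nabla_i\nabla_j$ (the connection Laplacian, which is what the functional $\E_m$ uses), we have $\mathrm{div}\,\langle \nabla A, B\rangle^\sharp = \langle \nabla A,\nabla B\rangle + \langle \Delta A, B\rangle$ pointwise, where $\langle \nabla A,B\rangle^\sharp$ is the vector field obtained by contracting the free covariant index of $\nabla A$ against $B$ and raising it; since $M$ is compact without boundary, the divergence theorem kills the left side and gives $\int_M \langle \nabla A,\nabla B\rangle = -\int_M \langle \Delta A, B\rangle = -\int_M \langle A,\Delta B\rangle$, the last equality by symmetry of the pairing (or by doing the same computation on the other factor). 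The key input is $\nabla g = 0$, which makes the metric contractions commute past $\nabla$.

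For part (2), I would differentiate the defining relation $g(AX,Y) = g(X, A^*Y)$ along a vector field $Z$. Applying $\nabla_Z$ to both sides and using $\nabla g = 0$ to move the derivative inside the metric, one gets $g((\nabla_Z A)X, Y) + g(A\nabla_Z X, Y) + g(AX, \nabla_Z Y) = g(\nabla_Z X, A^*Y) + g(X, (\nabla_Z A^*)Y) + g(X, A^*\nabla_Z Y)$. The second term on the left cancels the first on the right by the definition of $A^*$, and likewise the third on the left cancels the third on the right; what remains is $g((\nabla_Z A)X, Y) = g(X, (\nabla_Z A^*)Y)$ for all $X,Y$, which says exactly $(\nabla_Z A)^* = \nabla_Z(A^*)$. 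Part (3) is a one-line index computation: from $(A^*)_i^j = A_k^l g^{kj}g_{li}$ one computes $\langle A^*, B^*\rangle = (A^*)^j_i (B^*)^l_k g^{ik} g_{jl}$, substitutes the formula for each adjoint, and contracts the metric factors (each $g$ with its inverse) to recover $A^j_i B^l_k g^{ik} g_{jl} = \langle A, B\rangle$; this is pure bookkeeping.

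For part (4), I would again work in coordinates. With $A = A^j_i$, $B = B^j_i$, $C = C^j_i$ and matrix product $(BC)^j_i = B^j_k C^k_i$, compute $\langle A, BC\rangle = A^j_i B^l_k C^k_s \, g^{is} g_{jl}$ (the indices arranged so the pairing matches the inner-product formula). To get the first claimed equality, regroup the metric factors so that $A$ is paired against $B$ with the composition variable free, which produces $(B^*A)$ contracted against $C$; concretely $B^l_k g_{jl} g^{??}$ reassembles into $(B^*A)$ via the formula in part (3). The second equality $\langle A, BC\rangle = \langle AC^*, B\rangle$ is entirely analogous, regrouping so that $A$ pairs with $C$ instead. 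I do not anticipate a genuine obstacle anywhere: the only subtlety is keeping the index placement consistent with the sign/metric conventions fixed in the inner-product definition above, and being careful that $\Delta$ here is the connection (rough) Laplacian on tensors rather than, say, a Hodge Laplacian — so the mild "hard part" is really just setting up the right pointwise divergence identity in part (1) and then trusting the compactness hypothesis to discard the boundary term.
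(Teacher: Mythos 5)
Your proof is correct, and it matches the intent of the paper: the paper's own proof of this proposition is only the remark that these are straightforward computations whose details are left to the reader. Your fill-in---integration by parts via the pointwise divergence identity for (1), differentiating $g(AX,Y)=g(X,A^{*}Y)$ and cancelling using the definition of $A^{*}$ for (2), and index bookkeeping with $(A^*)_i^j=A_k^l g^{kj}g_{li}$ and $\langle A,B\rangle = A^k_i B^l_j g^{ij}g_{kl}$ for (3) and (4)---is exactly the elementary verification the authors have in mind, and each step checks out.
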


\begin{proof}
These are straightforward computations.  The details are left to the reader.
\end{proof}

Note that, for any $A \in T^1_1(M)$, $A(p)$ for $p\in M$ is just a linear map on the tangent space $T_pM$. Hence, for $A, B \in T^1_1(M)$, $AB$ is regarded as the composition of linear maps, i.e., $AB\in T^1_1(M)$. More precisely, in local coordinate $\{x^{i}\}_{i=1}^{n}$, we have
\begin{align*}
(AB)_i^j=A_s^jB_i^s,
\end{align*}
where $A=A_s^j \partial_{x^j} \otimes dx^s$, $B=B_i^s \partial_{x^s} \otimes dx^i$ and $AB=(AB)_i^j \partial_{x^j} \otimes dx^i$.

As an application of Proposition \ref{prop:inner}, we can obtain the $L^\infty$-norm of $J$ in $\J_g$.
\begin{lemma}\label{lem:L_infty_J}
If $J \in \J_g$, then $\| J \|_{L^\infty}=\sqrt{n}$ where $n$ denotes the dimension of $M$.
\end{lemma}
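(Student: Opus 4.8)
The plan is to show that the pointwise norm $|J|^2=\langle J,J\rangle$ is identically equal to $n$ on $M$, from which the assertion $\|J\|_{L^\infty}=\sqrt n$ is immediate. The whole argument is algebraic and uses only the two constraints defining $\J_g$, namely $J^2=-\mathrm{id}$ and $g(J\cdot,J\cdot)=g(\cdot,\cdot)$, together with Proposition \ref{prop:inner}.

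First I would record the consequence of metric compatibility for the adjoint. By definition of $J^*$, the identity $g(JX,JY)=g(X,Y)$ for all $X,Y\in\mathfrak{X}(M)$ is equivalent to $g(X,J^*JY)=g(X,Y)$, hence $J^*J=\mathrm{id}$. On the other hand $J^2=-\mathrm{id}$ says that $J$ is invertible with $J^{-1}=-J$. Combining the two gives $J^*=J^{-1}=-J$, equivalently $JJ^*=\mathrm{id}$.

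Next I would apply part (4) of Proposition \ref{prop:inner} with $A=J$, $B=\mathrm{id}$, $C=J$, together with $JJ^*=\mathrm{id}$, to obtain
\[
\langle J,J\rangle=\langle J,\mathrm{id}\cdot J\rangle=\langle JJ^*,\mathrm{id}\rangle=\langle \mathrm{id},\mathrm{id}\rangle .
\]
Since $\mathrm{id}=\delta_i^j\,\partial_{x^j}\otimes dx^i$, the inner product formula gives $\langle \mathrm{id},\mathrm{id}\rangle=g^{ij}g_{ij}=n$ at every point of $M$. Therefore $|J|^2\equiv n$ pointwise, and taking the supremum yields $\|J\|_{L^\infty}=\sqrt n$.

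There is no genuine obstacle here: the statement is a pointwise identity in linear algebra, and the only points requiring care are the correct translation of $g(J\cdot,J\cdot)=g(\cdot,\cdot)$ into $J^*=-J$ and the selection of the right instance of Proposition \ref{prop:inner}(4). (As a consistency check one may instead compute $n=\langle\mathrm{id},\mathrm{id}\rangle=-\langle\mathrm{id},J^2\rangle=-\langle J^*,J\rangle=\langle J,J\rangle$, again using part (4) and $J^*=-J$.)
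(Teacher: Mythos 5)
Your proof is correct and follows essentially the same route as the paper's: both arguments first extract the identity $J^*=-J$ from the two compatibility conditions and then use Proposition \ref{prop:inner}(4) to reduce $\langle J,J\rangle$ to $\langle \mathrm{id},\mathrm{id}\rangle=n$. The only difference is cosmetic—you derive $J^*=-J$ via $J^*J=\mathrm{id}$ and $J^{-1}=-J$, while the paper derives $J+J^*=0$ by a direct manipulation of $g$—and your closing consistency check is in fact the exact chain of equalities the paper writes down.
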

\begin{proof}
By definition of $\J_g$, for all $J\in \J_g$, there holds
\begin{align*}
J^2=-id, \quad
g(JX,JY)=g(X,Y),\,\,\forall X,Y \in \mathfrak{X}(M).
\end{align*}
Thus, for all $X,Y \in \mathfrak{X}(M)$, we have
\begin{align*}
g(X,JY)+g(X,J^*Y)
&=g(X,JY)+g(JX,Y) \\
&=g(X,JY)+g(J^2X,JY)\\
&=g(X,JY)-g(X,JY)\\
&=0,
\end{align*}
which implies
\begin{align*}
J+J^*=0.
\end{align*}
Hence, the condition (\ref{cond:main}) is clearly equivalent to
\begin{align}\label{cond:main2}
J^2=-id, \quad J+J^*=0.
\end{align}
Then we apply (\ref{cond:main2}) and Proposition \ref{prop:inner} to obtain
\begin{align*}
\langle J,J\rangle
=\langle -J^*,J\rangle
=-\langle J^*id,J\rangle
=-\langle id,(J^*)^*J\rangle
=-\langle id,JJ\rangle
=\langle id, id\rangle
=n.
\end{align*}
Hence, $\| J \|_{L^\infty}=\sqrt{n}$.
\end{proof}

\medskip
Suppose the family $\{ J(t)\}_{t \in (-\delta ,\delta)}$ is an admissible variation of $J$ in the space $\mathcal{J}_{g}$, i.e., $J(0)=J$ and $J(t) \in \mathcal{J}_{g}$ for all $t\in (-\delta ,\delta)$. If $S=\frac{d}{dt} J(t)|_{t=0}$ exists, $S$ is called \emph{an admissible variational direction} of $J$ in $\mathcal{J}_{g}$. Let us define $\mathcal{S}_J$ to be the collection of all admissible variational directions of $J$ in $\mathcal{J}_{g}$.

\begin{proposition}
Suppose $(M,g)$ is a compact almost Hermitian manifold without boundary and $\mathcal{J}_{g}$ the space of all smooth almost complex structures on $M$ which are compatible with $g$. Then for any fix $J\in \mathcal{J}_{g}$, we have
\begin{align}
\mathcal{S}_J=\{S \in T_1^1(M): SJ+JS=0, S+S^*=0 \}.
\end{align}
\end{proposition}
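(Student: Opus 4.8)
The plan is to characterize $\mathcal{S}_J$ by differentiating the two defining constraints of $\mathcal{J}_g$ along an admissible variation, obtaining the linearized (homogeneous) constraints $SJ+JS=0$ and $S+S^*=0$, and then to show conversely that every $S$ satisfying these linearized constraints is realized as a variational direction by exhibiting an explicit curve in $\mathcal{J}_g$ through $J$ with velocity $S$.

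For the inclusion $\mathcal{S}_J \subseteq \{S : SJ+JS=0,\ S+S^*=0\}$, I would take any admissible variation $\{J(t)\}$ with $J(0)=J$ and $S = \frac{d}{dt}J(t)|_{t=0}$. Using the equivalent form of the compatibility condition established in Lemma \ref{lem:L_infty_J}, namely $J(t)^2=-\mathrm{id}$ and $J(t)+J(t)^*=0$ for all $t$, I differentiate at $t=0$. From $J(t)^2=-\mathrm{id}$ the product rule gives $S J + J S = 0$; from $J(t)+J(t)^*=0$ together with part (2) of Proposition \ref{prop:inner} (which shows $*$ commutes with $\nabla_X$, and the same argument shows $\frac{d}{dt}(J(t)^*) = (\frac{d}{dt}J(t))^*$ since $*$ is a fixed pointwise-linear algebraic operation) I get $S + S^* = 0$. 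This direction is essentially a routine differentiation once the equivalent constraints $(\ref{cond:main2})$ are in hand.

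The substantive direction is the reverse inclusion: given $S \in T^1_1(M)$ with $SJ+JS=0$ and $S+S^*=0$, I must produce an admissible curve. The natural candidate is $J(t) = e^{tA} J e^{-tA}$ for a suitable $A$, or more directly $J(t) := (\cos t)\, J + (\sin t)\, S'$ for an appropriate $S'$; but the cleanest choice is probably to set $J(t) = J\exp(tJS)$ (a pointwise matrix exponential on each $T_pM$), or to use $J(t) = (\mathrm{id} + \tfrac{t}{2}S J)^{?}\cdots$. I expect the main obstacle to be verifying that the chosen curve genuinely stays inside $\mathcal{J}_g$, i.e., that $J(t)^2 = -\mathrm{id}$ and $J(t)+J(t)^*=0$ hold for all small $t$, not just infinitesimally; this is where the two algebraic hypotheses on $S$ are exactly what is needed. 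Concretely, writing $S = JB$ with $B$ then satisfying suitable symmetry relations (because $SJ = -JS$ forces $B$ to commute or anticommute appropriately, and $S+S^*=0$ translates into a condition on $B$), one checks that $J(t) = Je^{tB}$ satisfies $J(t)^2 = J e^{tB} J e^{tB} = J^2 e^{-tB} e^{tB} = -\mathrm{id}$ provided $JBJ^{-1} = -B$, which is equivalent to $SJ+JS=0$; and $J(t)+J(t)^*=0$ follows from $B^* = B$ (equivalently $S+S^*=0$) together with $J^*=-J$, using parts (2)--(4) of Proposition \ref{prop:inner} to manipulate adjoints of products. Finally I would note $J(t)$ is smooth in $t$ with $J'(0) = JB = S$, so $S \in \mathcal{S}_J$, completing the proof.

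One remaining care point is to confirm the curve $J(t)$ is well-defined as a smooth section of $T^1_1(M)$ — the pointwise exponential $e^{tB(p)}$ depends smoothly on $p$ and $t$ since $B$ is a smooth tensor field — and that no compactness of $M$ is actually needed for this pointwise construction (compactness is only used elsewhere for the integration-by-parts identities). I would present the forward inclusion first as a short differentiation argument, then devote the bulk of the proof to constructing and verifying the curve for the reverse inclusion, flagging that the algebraic identities $SJ+JS=0$ and $S+S^*=0$ are precisely the integrability conditions making $J e^{tB}$ land back in $\mathcal{J}_g$.
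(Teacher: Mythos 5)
Your strategy matches the paper's: differentiate the equivalent compatibility constraints $J(t)^2=-\mathrm{id}$ and $J(t)+J(t)^*=0$ for the forward inclusion, and for the reverse exhibit the curve $J(t)=Je^{tB}$ with $S=JB$ — which is exactly the paper's choice $J\exp(tSJ)$, since $B=J^{-1}S=-JS=SJ$.

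The one concrete error is the sign in the symmetry condition you attribute to $B$. You assert that $S+S^*=0$ is equivalent to $B^*=B$; it is in fact equivalent to $B^*=-B$. With $B=SJ$, $S^*=-S$, $J^*=-J$, and the reversal rule for adjoints,
\[
B^*=(SJ)^*=J^*S^*=(-J)(-S)=JS=-SJ=-B.
\]
The sign matters. Your own (correct) requirement $JBJ^{-1}=-B$ gives $e^{tB}J=Je^{-tB}$, hence $Je^{tB}=e^{-tB}J$. Then
\[
J(t)^*=(Je^{tB})^*=e^{tB^*}J^*=-e^{tB^*}J,
\qquad
-J(t)=-Je^{tB}=-e^{-tB}J,
\]
and equality of these forces $B^*=-B$, not $B^*=B$. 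Worse, if $B^*=B$ were imposed in addition to $JBJ^{-1}=-B$, matching $J(t)^*=-J(t)$ would require $e^{tB}$ to \emph{commute} with $J$, and commuting plus anticommuting with the invertible $J$ forces $B=0$, so the argument as stated would not close. The fix is purely the sign: $B$ is anti-self-adjoint, and with $B^*=-B$ the verification goes through exactly as in the paper. The rest — the forward differentiation, the remark that the pointwise matrix exponential is smooth in $(t,p)$, and the observation that compactness is not genuinely needed here — is correct.
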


\begin{proof}
It follows from (\ref{cond:main2}) that
\begin{align}\label{cond:J}
\mathcal{J}_{g}=\{J\in T^1_1(M):\, J^2=-id, \, J^*+J=0\}.
\end{align}

Suppose $\{ J(t)\}_{t \in (-\delta ,\delta)}$ is an admissible variation of $J$ in the space $\mathcal{J}_{g}$. Then for all $t\in (-\delta ,\delta)$, there holds
\begin{align*}
J(t)^2=-id, \,\, J(t)^*+J(t)=0.
\end{align*}
By direct computation, we have
\begin{align}\label{identity:S}
SJ+JS=0, \, S^*+S=0.
\end{align}
where $S=\frac{d}{dt} J(t)|_{t=0}$. On the other hand, for any $S\in T^1_1(M)$ satisfies property (\ref{identity:S}), we can choose
\begin{align*}
J(t)=J\exp \{ tSJ\},\quad \forall t \in (-\delta, \delta)
\end{align*}
which is well-defined for $\delta>0$ small enough due to the continuousness of $S,J$ on $M$ and the compactness of $M$. It is easy to check that $J(t)\in \mathcal{J}_{g}$ for all $t \in (-\delta, \delta)$ and
\begin{align*}
\frac{d}{dt} J(t)|_{t=0}=JSJ=S.
\end{align*}

Hence, we have
\begin{align}
\mathcal{S}_J=\{S \in T_1^1(M): SJ+JS=0, S+S^*=0 \}.
\end{align}
\end{proof}

For any $J\in \mathcal{J}_{g}$, we define an operator $\Phi_J: T^1_1(M) \rightarrow S_J$ by
\begin{align}
\Phi_J(T)=\frac{1}{4} \bigg( (T+JTJ)-(T+JTJ)^* \bigg).
\end{align}
It is a simple matter to check that $\Phi_J$ is a surjective linear map, i.e., $\Phi_J(T^1_1(M))=S_J$. Moreover, $\Phi_J|_{S_J}=id$.

\medskip
We are now in a position to derive the Euler-Lagrange equation of functional $\mathcal{E}_{m}(J)$. Suppose $J\in \mathcal{J}_{g}$ is a critical point of $\mathcal{E}_{m}(J)$. Then for any $S\in \mathcal{S}_J$, we have
\begin{align}\label{eqn:1-variation}
0=\delta \mathcal{E}_{m}(J)
=(-1)^m 2\int_M \left\langle \Delta^m J, S \right\rangle.
\end{align}
Since
\begin{align*}
\left\langle \Phi_J(\Delta^m J), S \right\rangle
=&\frac{1}{4}\big\langle
(\Delta^m J+ J\Delta^m J J)-  (\Delta^m J+ J\Delta^m J J)^*, S \big\rangle \\
=&\frac{1}{4}\big\langle \Delta^m J+ J\Delta^m J J, S \big\rangle
- \frac{1}{4}\big\langle(\Delta^m J+ J\Delta^m J J)^*, S \big\rangle \\
=&\frac{1}{4}\big\langle \Delta^m J+ J\Delta^m J J, S \big\rangle
- \frac{1}{4}\big\langle \Delta^m J+ J\Delta^m J J, S^* \big\rangle \\
=&\frac{1}{2}\big\langle \Delta^m J+ J\Delta^m J J, S \big\rangle \\
=&\frac{1}{2}\big\langle \Delta^m J, S \big\rangle
+ \frac{1}{2}\big\langle J\Delta^m J J, S \big\rangle\\
=&\frac{1}{2}\left\langle \Delta^m J, S \right\rangle
+ \frac{1}{2}\left\langle \Delta^m J , J^*SJ^* \right\rangle\\
=&\left\langle \Delta^m J, S \right\rangle,
\end{align*}
we deduce that, for all $S\in \mathcal{S}_J$, there holds
\begin{align*}
\int_M \left\langle \Phi_J(\Delta^m J), S \right\rangle=0
\end{align*}
which implies that
\begin{align}\label{eqn:EL-temp}
\Phi_J(\Delta^m J)=0.
\end{align}
The fact that $(\Delta^m J+ J\Delta^m J J)^*=\Delta^m J^*+ J^*\Delta^m J^* J^*=-(\Delta^m J+ J\Delta^m J J)$ yields an equivalent form of (\ref{eqn:EL-temp}), which reads
\begin{align}\label{eqn:EL-form 1}
\Delta^m J +J \Delta^m J J=0.
\end{align}
For simplicity, we can rewrite (\ref{eqn:EL-form 1}) in a simple form
\begin{align*}
[\Delta^m J, J]:=\Delta^m J J-J \Delta^m J=0.
\end{align*}

\begin{theorem}\label{thm:EL-smooth}
The Euler-Lagrange equation of functional $\mathcal{E}_{m}(J)$ is
\begin{align}\label{eqn:EL-form 2}
[\Delta^m J, J]=0.
\end{align}
\end{theorem}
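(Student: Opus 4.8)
The plan is to verify \eqref{eqn:EL-form 2} by carefully justifying the two computational claims that were already sketched before the theorem statement, namely the first-variation identity \eqref{eqn:1-variation} and the algebraic identity $\langle \Phi_J(\Delta^m J), S\rangle = \langle \Delta^m J, S\rangle$ for all $S \in \mathcal{S}_J$, and then assembling them. First I would fix $J \in \mathcal{J}_g$ a critical point and an arbitrary admissible variation $\{J(t)\}_{t \in (-\delta,\delta)}$ with $S = \frac{d}{dt}J(t)|_{t=0} \in \mathcal{S}_J$; such variations exist in abundance by the preceding proposition (e.g. $J(t) = J\exp\{tSJ\}$). Differentiating $\mathcal{E}_m(J(t))$ under the integral sign at $t=0$, one uses part (1) of Proposition \ref{prop:inner} repeatedly to move all derivatives onto one factor: for $m = 2k$ one gets $\frac{d}{dt}\int_M |\Delta^k J(t)|^2 = 2\int_M \langle \Delta^k J, \Delta^k S\rangle = 2(-1)^{?}\cdots$; iterating integration by parts $k$ times on each side yields $(-1)^m 2\int_M \langle \Delta^m J, S\rangle$, and the odd case $m = 2k-1$ is handled identically starting from $2\int_M \langle \nabla \Delta^{k-1}J, \nabla \Delta^{k-1}S\rangle$. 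This gives \eqref{eqn:1-variation}.

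Next I would establish that testing against the projected tensor is the same as testing against $\Delta^m J$ itself. The point is that $\Phi_J(T)$ is, by construction, the orthogonal projection (in the fibrewise inner product) onto $\mathcal{S}_J$ in the relevant sense: for $S \in \mathcal{S}_J$ one has $S^* = -S$ and $JS + SJ = 0$, hence $J^*SJ^* = (-J)S(-J) = JSJ = -J(SJ) \cdot{}$... more directly $JSJ = J(SJ) = J(-JS) = -J^2 S = S$. Using this together with parts (3) and (4) of Proposition \ref{prop:inner}, the displayed chain of equalities before the theorem goes through: $\langle J\Delta^m J\, J, S\rangle = \langle \Delta^m J, J^* S J^*\rangle = \langle \Delta^m J, JSJ\rangle = \langle \Delta^m J, S\rangle$, and the adjoint term contributes $-\langle \Delta^m J + J\Delta^m J\, J, S^*\rangle = +\langle \Delta^m J + J\Delta^m J\, J, S\rangle$ since $S^* = -S$. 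Collecting terms gives $\langle \Phi_J(\Delta^m J), S\rangle = \langle \Delta^m J, S\rangle$ pointwise. Combining with \eqref{eqn:1-variation}, we conclude $\int_M \langle \Phi_J(\Delta^m J), S\rangle = 0$ for every $S \in \mathcal{S}_J$.

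Finally, since $\Phi_J$ maps $T^1_1(M)$ onto $\mathcal{S}_J$ and restricts to the identity on $\mathcal{S}_J$, the tensor $\Phi_J(\Delta^m J)$ itself lies in $\mathcal{S}_J$; taking $S = \Phi_J(\Delta^m J)$ as test tensor (it is smooth whenever $J$ is) forces $\Phi_J(\Delta^m J) = 0$, which is \eqref{eqn:EL-temp}. Unwinding the definition of $\Phi_J$ and using that $(\Delta^m J + J\Delta^m J\, J)^* = -(\Delta^m J + J\Delta^m J\, J)$ — which follows from $J^* = -J$, $(\Delta^m J)^* = \Delta^m(J^*) = -\Delta^m J$ via part (2) of Proposition \ref{prop:inner}, and the anti-multiplicativity of $*$ — the equation $\Phi_J(\Delta^m J) = 0$ reduces to $\Delta^m J + J\Delta^m J\, J = 0$, i.e. \eqref{eqn:EL-form 1}. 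Left-multiplying by $J$ and using $J^2 = -\mathrm{id}$ rewrites this as $J\Delta^m J = \Delta^m J\, J$, that is, $[\Delta^m J, J] = 0$, completing the proof. The only genuinely delicate point is bookkeeping of signs and adjoints in the iterated integration by parts and in verifying $(\Delta^m J + J\Delta^m J\, J)^*$ is antisymmetric; everything else is a direct application of Proposition \ref{prop:inner}.
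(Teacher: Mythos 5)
Your proposal is correct and follows the paper's derivation essentially verbatim: first variation gives $(-1)^m 2\int_M\langle\Delta^m J,S\rangle=0$ for $S\in\mathcal{S}_J$, the algebraic identity $\langle\Phi_J(\Delta^m J),S\rangle=\langle\Delta^m J,S\rangle$ (via Proposition \ref{prop:inner}, $S^*=-S$, and $J^*SJ^*=S$) upgrades this to $\Phi_J(\Delta^m J)=0$, and the antisymmetry of $\Delta^m J+J\Delta^m J\,J$ yields \eqref{eqn:EL-form 1} and hence $[\Delta^m J,J]=0$. The only additions you make — spelling out the $k$-fold integration by parts with signs, and noting one may test against $S=\Phi_J(\Delta^m J)$ itself to conclude vanishing — are correct fill-ins of steps the paper leaves implicit.
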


An almost complex structure $J\in W^{m,2}(\mathcal{J}_{g})$ is called a \emph{weakly $m$-harmonic almost complex structure} if it satisfies (\ref{eqn:EL-form 2}) in distributional sense.
For the further study of $m$-harmonic almost complex structures, we will deduce two equivalent forms of (\ref{eqn:EL-form 2}) in distributional sense, which are stated in Proposition \ref{prop:EL-div} and \ref{prop:EL-weaklim}.

\begin{proposition}\label{prop:EL-div}
A critical point of $\mathcal{E}_{m}(J)$ on $W^{m,2}(\mathcal{J}_{g})$ satisfies the following equation in distributional sense,
\begin{align}\label{eqn:EL-weak}
\Delta^m J=\sum_{s=0}^{m-1}(-1)^{m+1+s} \nabla^s \cdot g_s
\end{align}
where
\begin{align}
g_s =\sum_{\substack {k_1+k_2+k_3=2m-s \\ 0 \leq k_1, k_2, k_3 \leq m}}
C(k_1, k_2, k_3)\,
\nabla^{k_1} J \nabla^{k_2} J \nabla^{k_3} J,
\quad C(k_1,k_2,k_3)\in \mathbb{Z}.
\end{align}
More precisely, for any $T \in C^\infty_0(T^1_1(M))$ the space of all smooth (1,1) tensor fields on $M$ with compact support, there holds
\begin{enumerate}
\item when $m=2k$, $k\in \mathbb{N}^+$,
\begin{align}\label{prop:EL-weak:m=2k}
\int_M \left\langle \Delta^k J , \Delta^k T \right\rangle
+\sum_{s=0}^{m-1} \int_M \left\langle g_s, \nabla^s T \right\rangle =0
\end{align}
\item when $m=2k-1$, $k\in \mathbb{N}^+$,
\begin{align}\label{prop:EL-weak:m=2k-1}
\int_M \left\langle \nabla \Delta^{k-1} J , \nabla \Delta^{k-1} T \right\rangle
+\sum_{s=0}^{m-1} \int_M \left\langle g_s, \nabla^s T \right\rangle =0
\end{align}
\end{enumerate}
For simplicity, we will give the exact meaning of $\nabla^s$ in the proof.

\end{proposition}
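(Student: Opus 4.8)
The plan is to start from the Euler-Lagrange equation in the form \eqref{eqn:EL-form 1}, namely $\Delta^m J + J \Delta^m J J = 0$, and convert it into the divergence form \eqref{eqn:EL-weak} by ``peeling off'' Laplacians one at a time while simultaneously commuting the matrix factor $J$ through derivatives. First I would multiply \eqref{eqn:EL-form 1} on the right by $J$ and use $J^2 = -\mathrm{id}$ to rewrite the equation as $\Delta^m J = J \Delta^m J\, J + \text{(lower-order curvature terms)}$; more usefully, since $J \Delta^m J = -\Delta^m J\, J$ (this is exactly $[\Delta^m J, J]=0$), the key identity to exploit is that $J \Delta^m(J) = \Delta^m(J J) - \Delta^m(\mathrm{id})$-type expansions produce, via the Leibniz rule for $\Delta^m = (\nabla^* \nabla)^m$ acting on a product, a sum of terms each of which is a contraction of $\nabla^{k_1} J \otimes \nabla^{k_2} J \otimes \nabla^{k_3} J$ with $k_1 + k_2 + k_3 = 2m$. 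The term with $k_1 = 0$, $k_2 = 0$, $k_3 = 2m$ reconstructs $J \cdot J \cdot \Delta^m J = -\Delta^m J$, which cancels against the left side; every remaining term has at least one derivative landing on each of at least two of the three $J$ factors... more precisely, has total derivative order $2m$ distributed so that no single factor carries all $2m$, hence $0 \le k_i \le m$ can be arranged after integrating by parts. So the essential algebraic input is: expand $\Delta^m(J^2) = \Delta^m(-\mathrm{id}) = $ (a curvature polynomial, which on a fixed compact manifold is itself a bounded smooth term, absorbable into the $g_s$ or treated separately), combine with \eqref{eqn:EL-form 1}, and collect.

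Next, the reduction to divergence form: each cubic term $\nabla^{k_1} J\, \nabla^{k_2} J\, \nabla^{k_3} J$ with $\max k_i = m + j$ for some $j \ge 1$ must be rewritten, via $j$-fold integration by parts, as $\nabla^s \cdot (\text{cubic terms with all derivative orders} \le m)$ plus terms with fewer top-order derivatives. Concretely, I would argue by downward induction on $s$ (equivalently on the top derivative order appearing): in the weak formulation, pair $\Delta^m J$ against a test tensor $T$, integrate by parts $m$ times to land $\Delta^m$ (or $\nabla \Delta^{m-1}$, according to parity) symmetrically on $J$ and $T$ as in \eqref{prop:EL-weak:m=2k} and \eqref{prop:EL-weak:m=2k-1}, and on the nonlinear side repeatedly move derivatives off the over-differentiated $J$ factor and onto either the other $J$ factors or onto $T$. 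Each such move either lowers the top derivative order on the $J$-side by one and produces a $\nabla^s T$ with $s$ one smaller, or redistributes a derivative among the $J$ factors; bookkeeping shows the process terminates with precisely the structure $\sum_{s=0}^{m-1} \langle g_s, \nabla^s T\rangle$, where $g_s$ is cubic in $J$ with $k_1 + k_2 + k_3 = 2m - s$ and $0 \le k_i \le m$. The integer coefficients $C(k_1, k_2, k_3)$ are whatever the combinatorics of the Leibniz rule and the integrations by parts produce; one does not need their exact values, only $C(k_1,k_2,k_3) \in \mathbb{Z}$, which is automatic since the Leibniz rule and integration by parts introduce only integer multiplicities.

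I would also need to address the non-Euclidean background honestly: commuting $\nabla$ past $\nabla$ and past $\Delta$ generates curvature terms $\mathrm{Rm} * \nabla^{\le 2m-2} J$ and similar; on a fixed compact $(M,g)$ these are smooth-coefficient lower-order terms, and I would either fold them into the $g_s$ (redefining $g_s$ to also include such curvature-weighted cubic and lower terms — the statement's ``$\nabla^s \cdot g_s$'' should be read as allowing smooth background tensors as coefficients) or remark that they are harmless for the subsequent regularity theory. This is the step I'd flag as the main bookkeeping obstacle rather than a conceptual one. The genuinely substantive point — and the one the paper emphasizes it can only verify for $m = 2, 3$ in the companion computation — is that \emph{all} the top-order ($k_i = m$ allowed) terms genuinely organize into a divergence $\nabla^s \cdot g_s$ with the claimed range of indices and with no leftover term carrying $2m$ derivatives on a single $J$; the cancellation of the $k_3 = 2m$ term against the left-hand $\Delta^m J$ is what makes this possible, and the remaining argument is careful but routine integration by parts. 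Finally I would specify, as promised in the statement, the precise meaning of the divergence $\nabla^s \cdot$: for a tensor $h$ of the appropriate valence, $\nabla^s \cdot h$ denotes the $s$-fold iterated divergence, i.e. the metric contraction of $\nabla^s h$ against the test tensor's $\nabla^s$, so that $\int_M \langle \nabla^s \cdot g_s, T\rangle = (-1)^s \int_M \langle g_s, \nabla^s T\rangle$ after integration by parts on the closed manifold $M$; with this convention \eqref{eqn:EL-weak} is exactly equivalent to \eqref{prop:EL-weak:m=2k}--\eqref{prop:EL-weak:m=2k-1}.
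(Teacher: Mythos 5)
Your route is genuinely different from the paper's, although it uses the same underlying algebraic inputs ($J^2=-\mathrm{id}$, $\Delta^m(J^2)=0$). The paper stays in the weak formulation throughout: it substitutes the admissible variation $S=\Phi_J(T)$, simplifies to $\int_M\langle\Delta^k J,\Delta^k(T+JTJ)\rangle=0$, and Leibniz-expands $\Delta^k(JTJ)=J\Delta^kT\,J+R_1$ and $\Delta^k(J^2)=\Delta^kJ\,J+J\Delta^kJ+R_2$. The payoff of that route is that the index constraints in the statement come out by inspection, with no derivative redistribution needed: $R_1$ contains $\nabla^{k_2}T$ with $k_2\le m-1$, and the pairing $\langle\Delta^kJ,R_1\rangle$ rearranges into $\langle\nabla^{k_1}J\,\Delta^kJ\,\nabla^{k_3}J,\nabla^{k_2}T\rangle$ where every $J$-factor already carries at most $m$ derivatives (because $\Delta^k$ has order $m$); likewise $\langle\nabla(R_2J),\nabla\Delta^{k-1}T\rangle$ has all $J$-orders $\le m$ and $s=m-1$. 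You instead begin from the strong-form identity $4\Delta^m J=T_m$ (the paper's Theorem~\ref{thm:el:nonliearity}, where the cubic factors initially carry up to $2m-1$ derivatives) and plan to push derivatives down by repeated integration by parts. That can be made to work, and the bound $s\le m-1$ does survive the redistribution because at most $m-1$ moves are required per over-differentiated factor, but you assert this bookkeeping rather than carry it out; and for a $W^{m,2}$ weak solution the pointwise identity $\Delta^m J+J\Delta^m J\,J=0$ is not available to start from, so the entire manipulation must live inside the pairing against $T$ from the outset, as the paper arranges.

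Two corrections of substance. First, you write that the paper ``can only verify for $m=2,3$'' that the top-order terms organize into a divergence; that restriction applies to Lemma~\ref{lem:Nonliearity} and Proposition~\ref{prop:T_lambda}, which establish the much sharper \emph{good divergence form} of Definition~\ref{def:good divergence form} with a factor $J-\lambda_0$ extracted. Proposition~\ref{prop:EL-div} is a weaker statement and is proved for \emph{every} $m$; conflating the two misreads the architecture of the paper, whose whole point is that \ref{prop:EL-div} is general while the $\lambda_0$-structure is not. Second, the curvature worry is misplaced: $\nabla^{k_1}$ in the proof is by definition the specific Leibniz sum $\sum_{\alpha<}\nabla_{i_{\alpha_1}}\cdots\nabla_{i_{\alpha_{k_1}}}$ produced by expanding $\Delta^k=\nabla_{i_1}^2\cdots\nabla_{i_k}^2$ on a product, and no commutation of covariant derivatives is ever performed — only the Leibniz rule, self-adjointness of $\Delta$ on a closed manifold, and metric-compatibility of $\nabla$. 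Consequently no curvature tensors and no non-integer coefficients arise, and $C(k_1,k_2,k_3)\in\mathbb{Z}$ is exact; folding smooth background coefficients into $g_s$, as you propose, would contradict that part of the statement.
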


\begin{proof}
In the sequel, we will focus on the case $m=2k$ for $k\in \mathbb{N}^+$. Because, one can take the similar process to obtain the result for case $m=2k-1$, $k\in \mathbb{N}^+$.

Suppose $J\in \mathcal{J}_{g}$ is a critical point of $\mathcal{E}_{m}(J)$. Then for any $S\in \mathcal{S}_J$, we have
\begin{align*}
0=2\int_M \left\langle \Delta^k J, \Delta^k S \right\rangle.
\end{align*}
Since $\Phi_J(T)\in S_J$ for any $T \in C^\infty_0(T^1_1(M))$, we have
\begin{align}
0=& \frac{1}{2} \int_M \bigg\langle \Delta^k J \, ,\, \Delta^k \big((T+JTJ)-(T+JTJ)^* \big) \bigg\rangle \notag\\
=&\frac{1}{2} \int_M \bigg\langle \Delta^k J, \Delta^k \big(T+JTJ \big) \bigg\rangle
-\bigg\langle \Delta^k J, \Delta^k \big(T+JTJ \big)^* \bigg\rangle \notag\\
=&\frac{1}{2} \int_M \bigg\langle \Delta^k J, \Delta^k \big(T+JTJ \big) \bigg\rangle
-\bigg\langle \Delta^k J^*, \Delta^k \big(T+JTJ \big) \bigg\rangle \notag\\
=&\int_M \bigg\langle \Delta^k J, \Delta^k \big(T+JTJ \big) \bigg\rangle \label{prop:EL-weak:temp2}\\
=&\int_M \left\langle \Delta^k J, \Delta^k T \right\rangle
+\left\langle \Delta^k J, J\Delta^k TJ \right\rangle
+\left\langle \Delta^k J, R_1 \right\rangle \notag\\
=&\int_M \left\langle \Delta^k J, \Delta^k T \right\rangle
+\left\langle J^* \Delta^k J J^*, \Delta^k T \right\rangle
+\left\langle \Delta^k J, R_1 \right\rangle \notag\\
=&\int_M \left\langle \Delta^k J, \Delta^k T \right\rangle
+\left\langle J \Delta^k J J, \Delta^k T \right\rangle
+\left\langle \Delta^k J, R_1 \right\rangle \notag\\
=&\int_M \left\langle \Delta^k J, \Delta^k T \right\rangle
-\left\langle \big(\Delta^k J J+R_2 \big) J, \Delta^k T \right\rangle
+\left\langle \Delta^k J, R_1 \right\rangle \notag\\
=&\int_M 2\left\langle \Delta^k J, \Delta^k T \right\rangle
-\left\langle R_2 J, \Delta^k T \right\rangle
+\left\langle \Delta^k J, R_1 \right\rangle \notag\\
=&\int_M 2\left\langle \Delta^k J, \Delta^k T \right\rangle
+\left\langle \nabla (R_2 J), \nabla \Delta^{k-1} T \right\rangle
+\left\langle \Delta^k J, R_1 \right\rangle \label{prop:EL-weak:temp1}
\end{align}
where
\begin{align*}
R_1= \Delta^k (JTJ)-J \Delta^k T J,
\end{align*}
and due to $J^2 =- id$,
\begin{align*}
0=\Delta^k (JJ)=\Delta^k J J+ J\Delta^k J +R_2.
\end{align*}
In order to describe the terms of $R_1$ and $R_2$, we will give their local expression by taking a local orthonormal fields $\{ e_i\}_{i=1}^n$ on $M$. For simplicity, we use the notation $\nabla_i:=\nabla_{e_i}$ and make the convention that the same indices mean summation. Hence, $\Delta=\nabla_i^2=\nabla_i \nabla_i$. Let us begin to compute the terms $R_1$ and $R_2$,
\begin{align*}
R_1=&\Delta^k (JTJ) -J \Delta^k T J \\
=&\nabla_{i_1}^2 \cdots \nabla_{i_k}^2 (JTJ) -J \Delta^k T J \\
=&\sum_{\substack{\alpha<, \beta<, \gamma< \\
k_1+k_2+k_3=m\\
k_2\leq m-1
}}
\nabla_{i_{\alpha_1}} \cdots \nabla_{i_{\alpha_{k_1}}} J \,
\nabla_{i_{\beta_1}} \cdots \nabla_{i_{\beta_{k_2}}} T\,
\nabla_{i_{\gamma_1}} \cdots \nabla_{i_{\gamma_{k_3}}} J
\end{align*}
and
\begin{align*}
R_2=&\Delta^k (JJ)-\Delta^k J J-J\Delta^k J\\
=&\sum_{\substack{\alpha<, \beta<, \\
k_1+k_2=m\\
1\leq k_1, k_2\leq m-1
}}
\nabla_{i_{\alpha_1}} \cdots \nabla_{i_{\alpha_{k_1}}} J \,
\nabla_{i_{\beta_1}} \cdots \nabla_{i_{\beta_{k_2}}} J
\end{align*}
where the symbol $\alpha<$ means $1\leq \alpha_1\leq \cdots \leq \alpha_{k_1}\leq k$, so do $\beta<$ and $\gamma<$. To simplify notation, we define
\begin{align*}
\nabla^{k_1} = \sum_{\alpha<} \nabla_{i_{\alpha_1}} \cdots \nabla_{i_{\alpha_{k_1}}} .
\end{align*}
Then we can rewrite $R_1$ and $R_2$ in the following simple form
\begin{align*}
R_1=&\sum_{\substack{k_1+k_2+k_3=m \\k_2\leq m-1}}
\nabla^{k_1} J \,\nabla^{k_2} T\,\nabla^{k_3} J \\
R_2=&\sum_{\substack{k_1+k_2=m\\ 1\leq k_1, k_2\leq m-1}}
\nabla^{k_1} J \,\nabla^{k_2} J.
\end{align*}
Substituting above equalities into (\ref{prop:EL-weak:temp1}), we have
\begin{align*}
0=&\int_M 2\left\langle \Delta^k J, \Delta^k T \right\rangle
+\int_M\sum_{\substack{k_1+k_2=m\\ 1\leq k_1, k_2\leq m-1}}
\left\langle \nabla (\nabla^{k_1} J \,\nabla^{k_2} J J), \nabla \Delta^{k-1} T \right\rangle \\
&+\int_M \sum_{\substack{k_1+k_2+k_3=m \\k_2\leq m-1}}
\left\langle \Delta^k J, \nabla^{k_1} J \,\nabla^{k_2} T\,\nabla^{k_3} J  \right\rangle\\
=&\int_M 2\left\langle \Delta^k J, \Delta^k T \right\rangle
+\int_M\sum_{\substack{k_1+k_2=m\\ 1\leq k_1, k_2\leq m-1}}
\left\langle \nabla (\nabla^{k_1} J \,\nabla^{k_2} J J), \nabla \Delta^{k-1} T \right\rangle \\
&+ \int_M\sum_{\substack{k_1+k_2+k_3=m \\k_2\leq m-1}}
\left\langle \nabla^{k_1} J \Delta^k J \,\nabla^{k_3} J,  \nabla^{k_2} T  \right\rangle,
\end{align*}
which implies (\ref{prop:EL-weak:m=2k}).
\end{proof}

It follows from the argument in above proof that (\ref{eqn:EL-weak}) is equivalent to  (\ref{eqn:EL-form 2}) for $J\in W^{m,2}(\mathcal{J}_{g})$ in distributional sense.

\begin{proposition}\label{prop:EL-weaklim}
A weakly $m$-harmonic almost complex structure $J\in W^{m,2}(\mathcal{J}_{g})$ satisfies the following, for all $T \in C^\infty_0(T^1_1(M))$,
\begin{enumerate}
\item when $m=2k$, $k\in \mathbb{N}^+$
\begin{align}\label{prop:EL-weak-2:m=2k}
\int_M \left\langle \Delta^k J, [J, \Delta^k T]\right\rangle
+\sum_{\substack{k_1+k_2=m \\ 1\leq k_1, k_2 \leq m-1}}
\left\langle \Delta^k J, [\nabla^{k_1} J, \nabla^{k_2}T]\right\rangle =0,
\end{align}
\item when $m=2k-1$, $k\in \mathbb{N}^+$
\begin{align}\label{prop:EL-weak-2:m=2k-1}
\int_M \left\langle \nabla \Delta^{k-1} J, [J, \nabla \Delta^{k-1} T]\right\rangle +\sum_{\substack{k_1+k_2=m \\ 1\leq k_1, k_2 \leq m-1}}
\left\langle \nabla \Delta^{k-1} J, [\nabla^{k_1} J, \nabla^{k_2}T]\right\rangle =0.
\end{align}
\end{enumerate}
Moreover, the weak limit of a sequence of weakly $m$-harmonic almost complex structures in $W^{m,2}$ (with bounded $W^{m,2}$ norm) is still $m$-harmonic.
\end{proposition}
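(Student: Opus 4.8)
The plan is to establish the two weak forms \eqref{prop:EL-weak-2:m=2k} and \eqref{prop:EL-weak-2:m=2k-1} first, and then deduce the stability of the $m$-harmonic condition under weak limits. For the first part I would start from the equivalent form \eqref{eqn:EL-form 2}, i.e. $[\Delta^m J, J]=0$, read in the distributional sense: for every $T\in C^\infty_0(T^1_1(M))$ we have $\int_M \langle [\Delta^m J, J], T\rangle = 0$, which upon using part (4) of Proposition \ref{prop:inner} and $J^*=-J$ can be rewritten as $\int_M \langle \Delta^m J, [J,T]\rangle = 0$ (being careful that the adjoint swaps the order in the commutator and picks up signs, so $[J,T]$ is the natural dual object). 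The point is now to move half of the derivatives in $\Delta^m$ onto the test object $[J,T]$ by integration by parts (part (1) of Proposition \ref{prop:inner}, iterated $k$ times when $m=2k$, and $k-1$ times plus one gradient when $m=2k-1$), and then to commute $\Delta^k$ (resp. $\nabla\Delta^{k-1}$) past the factor $J$ inside $[J,T]$. Each such commutation produces a Leibniz expansion $\Delta^k(JT) = J\Delta^k T + \sum \nabla^{k_1}J\,\nabla^{k_2}T$ with $1\le k_1,k_2\le m-1$ (the endpoint $k_1=0$ being exactly the $J\Delta^k T$ term that is kept, and $k_1=m$ not occurring because it is absorbed into the main commutator after re-using $[\Delta^m J, J]=0$ once more, just as in the proof of Proposition \ref{prop:EL-div}); collecting terms gives precisely the stated identities. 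This step is essentially the same bookkeeping already carried out for Proposition \ref{prop:EL-div}, so I would present it as a short derivation cross-referencing that proof.

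For the weak-limit statement, let $J_i \in W^{m,2}(\mathcal{J}_g)$ be weakly $m$-harmonic with $\|J_i\|_{W^{m,2}}\le C$, and let $J_i \rightharpoonup J_\infty$ weakly in $W^{m,2}$. First, $J_\infty\in W^{m,2}(\mathcal{J}_g)$: by Rellich–Kondrachov $J_i\to J_\infty$ strongly in $W^{m-1,2}$ and (up to a subsequence) a.e., so the pointwise constraints $J_\infty^2=-\mathrm{id}$, $J_\infty+J_\infty^*=0$ pass to the limit, exactly as in the proof of Theorem \ref{thm:exist:2m}. Then I would pass to the limit in the weak formulation \eqref{prop:EL-weak-2:m=2k} (resp. \eqref{prop:EL-weak-2:m=2k-1}) written for $J_i$ and a fixed $T\in C^\infty_0(T^1_1(M))$. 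The leading term $\int_M \langle \Delta^k J_i, [J_i,\Delta^k T]\rangle$ is a product of $\Delta^k J_i$ (bounded in $L^2$, hence weakly convergent to $\Delta^k J_\infty$) against $[J_i,\Delta^k T]$, which converges \emph{strongly} in $L^2$ because $J_i\to J_\infty$ in $L^2$ (indeed in $C^0$ is not available, but $L^2$ strong convergence of $J_i$ against the bounded smooth $\Delta^k T$ suffices); a weak–strong pairing then gives convergence to $\int_M \langle \Delta^k J_\infty, [J_\infty,\Delta^k T]\rangle$. For each lower-order term $\int_M \langle \Delta^k J_i, [\nabla^{k_1}J_i,\nabla^{k_2}T]\rangle$ with $k_1+k_2=m$, $1\le k_1,k_2\le m-1$, note $\Delta^k J_i \rightharpoonup \Delta^k J_\infty$ in $L^2$ while $\nabla^{k_1}J_i \to \nabla^{k_1}J_\infty$ strongly in $L^2$ (here $k_1\le m-1$ is what makes Rellich applicable) and $\nabla^{k_2}T$ is a fixed bounded smooth factor, so the product converges strongly in $L^1$ and again the pairing passes to the limit. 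Hence $J_\infty$ satisfies the weak formulation, i.e. is weakly $m$-harmonic.

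The main obstacle is the first part: producing the commutator identities \eqref{prop:EL-weak-2:m=2k}--\eqref{prop:EL-weak-2:m=2k-1} cleanly, since the non-commutativity of matrix multiplication means one must track the \emph{order} of factors through every integration by parts and every adjoint, and verify that the endpoint terms ($k_1=0$ and $k_1=m$ in the Leibniz expansions) are exactly what is needed — the $k_1=0$ term reconstituting the principal commutator $[J,\Delta^k T]$ and the $k_1=m$ term being reabsorbed via one further use of $[\Delta^m J,J]=0$. Once these identities are in hand, the weak-limit argument is routine weak–strong convergence, the only subtlety being to invoke Rellich at order $m-1$ so that every factor $\nabla^{k_j}J_i$ appearing with a derivative count $\le m-1$ converges strongly in $L^2$ while the single top-order factor $\Delta^k J_i$ (or $\nabla\Delta^{k-1}J_i$) is handled by weak $L^2$ convergence against a strongly convergent partner.
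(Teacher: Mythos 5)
Your route to the commutator form $\int_M\langle\Delta^k J,\Delta^k[J,T]\rangle=0$ (for $m=2k$) is essentially the paper's, though phrased differently: you pass the adjoint across $\langle[\Delta^m J,J],T\rangle$ using $J^*=-J$ and Proposition \ref{prop:inner}(4), while the paper instead takes the variational identity $\int_M\langle\Delta^k J,\Delta^k(T+JTJ)\rangle=0$ from the proof of Proposition \ref{prop:EL-div} and replaces $T$ by $JT$, noting $JT+J(JT)J=[J,T]$. These are algebraically equivalent, and your starting point should really be the variational identity anyway, since for $J\in W^{m,2}$ the expression $\int_M\langle[\Delta^m J,J],T\rangle$ is only meaningful after integrating by parts to land on that identity. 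The weak-limit argument at the end is correct and matches the paper: Rellich gives strong $W^{m-1,2}$ convergence so the constraints pass to the limit, and each term in the weak formulation is a weak--strong $L^2$ pairing.

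There is, however, a genuine gap in the middle step. You claim that in the Leibniz expansion the endpoint term where $J$ carries all $m$ derivatives (``$k_1=m$'') ``does not occur because it is absorbed \ldots after re-using $[\Delta^m J,J]=0$.'' This is not correct. The Leibniz expansion of $\Delta^k[J,T]$ \emph{does} contain the term $[\Delta^k J,T]$, and there is no way to absorb it by re-invoking the Euler--Lagrange equation $[\Delta^m J,J]=0$: that equation controls $\Delta^m J$ (order $2m$), whereas the offending term involves $\Delta^k J$ (order $m$), so they are unrelated. Nor is the analogy to Proposition \ref{prop:EL-div} apt --- what is re-used there is the differentiated constraint $\Delta^k(J^2)=0$, not the Euler--Lagrange equation. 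The correct reason the term drops out is a pointwise algebraic identity: using $(\Delta^k J)^*=\Delta^k(J^*)=-\Delta^k J$ and Proposition \ref{prop:inner}(4),
\begin{align*}
\langle\Delta^k J,[\Delta^k J,T]\rangle
&=\langle(\Delta^k J)^*\Delta^k J,T\rangle-\langle\Delta^k J\,(\Delta^k J)^*,T\rangle
=-\langle(\Delta^k J)^2,T\rangle+\langle(\Delta^k J)^2,T\rangle=0,
\end{align*}
so $\int_M\langle\Delta^k J,[\Delta^k J,T]\rangle=0$ identically. Without this observation, the identity \eqref{prop:EL-weak-2:m=2k} does not follow from your Leibniz expansion, so you should replace the ``reabsorption'' claim with this pointwise cancellation.
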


\begin{proof}
Here we only prove the results in the case $m=2k$, $k\in \mathbb{N}^+$. A similar argument can yield the conclusion in the case $m$ is odd.

For a weakly $m$-harmonic almost complex structure $J \in W^{m,2}(\mathcal{J}_{g})$, we can recall (\ref{prop:EL-weak:temp2}), i.e., for all $T \in C^\infty_0(T^1_1(M))$, there holds
\begin{align*}
\int_M \left\langle \Delta^k J, \Delta^k \big(T+JTJ \big) \right\rangle =0
\end{align*}
By replacing $T$ by $JT$, we deduce that, for all $T \in C^\infty_0(T^1_1(M))$, there holds
\begin{align*}
\int_M \left\langle \Delta^k J, \Delta^k \big(JT-TJ \big) \right\rangle
=\int_M \left\langle \Delta^k J, \Delta^k [J,T] \right\rangle =0.
\end{align*}
Since
\begin{align*}
\Delta^k [J,T]=[\Delta^k J, T]+[J,\Delta^k T]
+\sum_{\substack{k_1+k_2=m \\ 1\leq k_1, k_2 \leq m-1}}
[\nabla^{k_1}J, \nabla^{k_2}T],
\end{align*}
and
\begin{align*}
\left\langle \Delta^k J, [\Delta^k J, T] \right\rangle
=&\left\langle \Delta^k J, \Delta^k J\, T \right\rangle
-\left\langle \Delta^k J,  T \,\Delta^k J \right\rangle\\
=&\left\langle (\Delta^k J)^*\,\Delta^k J,  T \right\rangle
-\left\langle \Delta^k J\,(\Delta^k J)^*,  T  \right\rangle\\
=&\left\langle \Delta^k J^*\,\Delta^k J,  T \right\rangle
-\left\langle \Delta^k J\,\Delta^k J^*,  T  \right\rangle\\
=&0,
\end{align*}
(\ref{prop:EL-weak-2:m=2k}) follows immediately. It is clear that (\ref{prop:EL-weak-2:m=2k}) is equivalent to  (\ref{eqn:EL-form 2}) for $J\in W^{m,2}(\mathcal{J}_{g})$ in distributional sense.

Now we will apply (\ref{prop:EL-weak-2:m=2k}) to prove that the weak limit of a sequence of weakly $m$-harmonic almost complex structures in $W^{m,2}$ (with bounded $W^{m,2}$ norm) is still $m$-harmonic. Suppose $\{J_l\}$ is a sequence of weakly $m$-harmonic almost complex structures in $W^{m,2}$ such that
\begin{align*}
J_l \rightharpoonup J_0 \quad \mbox{in}\,\, W^{m,2}
\quad \mbox{and} \quad
\sup_l \|J_l\|_{W^{m,2}} <\infty.
\end{align*}
By Rellich–Kondrachov theorem, we know that $J_l$ converges to $J_0$ in $W^{m-1,2}$. Hence $J_0 \in W^{m,2}(\mathcal{J}_{g})$.
Since $J_l \rightharpoonup J_0$ in $W^{m,2}$, we have
\begin{align}\label{prop:EL-weak-2:temp1}
\lim_{l\rightarrow \infty}
\int_M \left\langle \Delta^k J_l, [J_0, \Delta^k T]\right\rangle
= \int_M \left\langle \Delta^k J_0, [J_0, \Delta^k T]\right\rangle.
\end{align}
Since $J_l$ converges to $J_0$ in $W^{m-1,2}$, $\sup_l \|J_l\|_{W^{m,2}} <\infty$  and
\begin{align*}
\left|\int_M \left\langle \Delta^k J_l, [J_l-J_0, \Delta^k T]\right\rangle \right|
\leq \|\Delta^k J_l\|_{L^2} \|J_l-J_0\|_{L^2} \|\Delta^k T\|_{L^\infty},
\end{align*}
we have
\begin{align}\label{prop:EL-weak-2:temp2}
\lim_{l\rightarrow \infty}
\int_M \left\langle \Delta^k J_l, [J_l-J_0, \Delta^k T]\right\rangle
= 0.
\end{align}
Combining (\ref{prop:EL-weak-2:temp1}) and (\ref{prop:EL-weak-2:temp2}), we deduce that
\begin{align}
\lim_{l\rightarrow \infty}
\int_M \left\langle \Delta^k J_l, [J_l, \Delta^k T]\right\rangle
=\int_M \left\langle \Delta^k J_0, [J_0, \Delta^k T]\right\rangle.
\end{align}
Similarly we conclude that, for all $k_1+k_2=m$ and $1\leq k_1, k_2 \leq m-1$,
\begin{align}
\lim_{l\rightarrow \infty}
\int_M \left\langle \Delta^k J_l, [\nabla^{k_1} J_l, \nabla^{k_2}T]\right\rangle
=\int_M \left\langle \Delta^k J_0, [\nabla^{k_1} J_0, \nabla^{k_2}T]\right\rangle.
\end{align}
Hence, $J_0$ is also a weakly $m$-harmonic almost complex structure and the proof is complete.
\end{proof}

\section{The regularity of a class of semilinear elliptic equations}\label{sec:semi-regularity}

In this section, we will establish decay estimates for a class of semilinear elliptic equations in critical dimension and generalize the regularity results in \cite{GS} due to Gastel and Scheven.

\medskip
Suppose $B_1$ is a unit ball in $\mathbb{R}^n$ centered at origin.
Let us consider the following semilinear elliptic equation for $u: B_1 \subset \mathbb{R}^n\rightarrow \R^K$, $K\in \mathbb{N}^+$,
\begin{align}\label{eqn:semilinear}
\Delta^m u = \Psi (x, \nabla u, \cdots, \nabla^{2m-1} u)
\end{align}
where $\Psi: \R^n \times \R^{nK}\times \cdots \times \R^{n^{2m-1}K} \rightarrow \R^K$ is smooth.
\subsection{Decay estimates for \texorpdfstring{$W^{2,2}$}{W22} biharmonic almost complex structure on \texorpdfstring{$B_1 \subset \R^4$}{B1}} \label{sec:decay_biharm}
In order to illustrate the main idea of our proof of decay estimates for a class of semilinear elliptic equations in the next subsection, first we consider a special case: decay estimates for biharmonic almost complex structure defined on the unit ball $B_1$ in Euclidean space $\R^4$. The presentation is clearer and more streamlined for this case and the main ideas are essentially the same.
More precisely, let us consider the biharmonic almost complex structure equation
\begin{equation}
\Delta^2 J = J \bigg(  \nabla \Delta J \nabla J  + \nabla J \nabla \Delta J+ \Delta J \Delta J
+  \Delta( \nabla J )^2\bigg)
\end{equation}
where $J : B_1 \subset \R^4 \rightarrow M_4(\R)$ ($M_4(\R)$ the set of all $4\times 4$ real matrices) satisfies
\begin{equation}\label{cond:J:euclidean}
J^2=- id, \quad J+J^T=0
\end{equation}
where $id$ denotes the identity matrix and $J^T$ denotes the transpose of $J$. Note that the condition (\ref{cond:J:euclidean}) is just (\ref{cond:J}) when $(M, g)$ is the unit ball in Euclidean space. It follows from Proposition \ref{prop:T_lambda} that for any given constant matrix $\lambda_0$, biharmonic almost complex structure $J$ always satisfies
\begin{align}\label{eqn:biharm:lambda}
\Delta^2 J = T_{\lambda_0}
\end{align}
where $T_{\lambda_0}$ is a linear combination of the following terms
\begin{align*}
\nabla^{\alpha}\bigg((J-\lambda_{0})\ast\nabla^{\beta}J\ast\nabla^{\gamma}J\bigg)
\quad or\quad
\lambda_0\ast\nabla^{\alpha}\bigg((J-\lambda_{0})\ast\nabla^{\delta}J\bigg),
\end{align*}
where $\alpha,\beta,\gamma,\delta$ are multi-indices such that $1\leq|\alpha| \leq 3$, $0\leq |\beta|,|\gamma|,|\delta|\leq 2$, $|\alpha|+|\beta|+|\gamma|=4$
and $|\alpha|+|\delta|=4$. The notation $A*B$ means the composition of terms $A$ and $B$, such as $AB$ and $BA$. Then we have the following decay estimate.
\begin{lemma}\label{lem:decay_biharmonic}
Suppose $J \in W^{2,2}(B_1, M_4(\R))$ is a weakly biharmonic almost complex structure on unit ball $B_1 \subset \R^4$. Then, given any $\tau \in (0,1)$, there exists $\epsilon_0>0$ and $\theta_0 \in (0, \frac{1}{2})$ such that if
\begin{align}
E(J,1):=\bigg( \int_{B_1} |\nabla J|^4 \bigg)^\frac{1}{4} + \bigg( \int_{B_1} |\nabla^2 J|^2 \bigg)^\frac{1}{2}
\leq \epsilon_0,
\end{align}
then we have
\begin{align}\label{lem:S:decay}
D_{p_0}(J,\theta_0) \leq \theta_0^\tau D_{p_0}(J,1),
\end{align}
where $p_0=\frac{8}{3}$ and
\begin{align}
D_{p}(J,r):= \bigg(r^{p-4} \int_{B_r} |\nabla u|^p \bigg)^{\frac{1}{p}}.
\end{align}
\end{lemma}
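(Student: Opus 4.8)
The plan is to run a Rivière–Struwe / Gastel–Scheven type $\varepsilon$-regularity argument on a small ball, exploiting the special divergence-like structure encoded in \eqref{eqn:biharm:lambda}. First I would fix a scale $\rho\in(0,1)$ to be chosen, work on $B_\rho$, and choose the constant matrix $\lambda_0$ to be the average $\lambda_0 = (J)_{B_\rho} := \fint_{B_\rho} J$, so that $\|J-\lambda_0\|_{L^4(B_\rho)}$ is controlled by $\rho \|\nabla J\|_{L^4(B_\rho)}$ via Poincaré, hence small compared to $E(J,1)\le \varepsilon_0$. The point of \eqref{eqn:biharm:lambda} is that every term of $T_{\lambda_0}$ carries at least one factor of $(J-\lambda_0)$ and is in divergence form $\nabla^\alpha(\cdots)$ with $1\le|\alpha|\le 3$; this is what upgrades a naively critical nonlinearity into one with a genuine smallness gain.

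Next I would split $J = h + w$ on $B_\rho$, where $h$ is the biharmonic ($\Delta^2 h = 0$) extension of $J$ with matching Cauchy data ($h = J$, $\nabla h = \nabla J$, $\Delta h = \Delta J$, $\nabla\Delta h = \nabla\Delta J$ on $\partial B_\rho$ in the trace sense), and $w \in W^{2,2}_0$-type correction solving $\Delta^2 w = T_{\lambda_0}$ with zero boundary data. For the homogeneous part, standard interior estimates for polyharmonic functions give the decay $D_{p_0}(h,\theta\rho) \le C\,\theta\, D_{p_0}(h,\rho)$ for any $\theta\in(0,\tfrac12)$, since $D_{p_0}$ with the scaling weight $r^{p_0-4}$ is a Morrey-type quantity that decays linearly on harmonic-type functions (one differentiates $h$, uses the mean value / gradient estimate, and Jensen). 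For the inhomogeneous part $w$, I would represent $w$ through the fundamental solution $G$ of $\Delta^m = \Delta^2$ on $\R^4$ (Lemma 2.2 with $m=2$, i.e. $k\in[1,4]$), writing $\nabla w$ (and lower derivatives) as a sum of convolutions $\int \nabla^k G(x-y)\, g(y)\,dy$ where $g$ is one of the cubic or quadratic expressions appearing in $T_{\lambda_0}$ after moving derivatives off via the divergence structure. Lemma 2.2 then converts an $L^q$ bound on $g$ into an $L^{p}$ bound on the appropriate derivative of $w$, and crucially the exponent bookkeeping $1 + \tfrac1p = \tfrac{k}{2m} + \tfrac1q$ is exactly arranged so that the cubic terms $(J-\lambda_0)*\nabla^\beta J*\nabla^\gamma J$ with $|\alpha|+|\beta|+|\gamma|=4$ land back in the space controlling $D_{p_0}$. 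Hölder's inequality distributes the integrability: each $\nabla^j J$ factor is measured in $L^{4/j}$ (for $j=1$) or $L^2$ (for $j=2$), matching the two pieces of $E(J,1)$, and the extra factor $\|J-\lambda_0\|_{L^4}$ (or $|\lambda_0|$ times $\|J-\lambda_0\|$ for the quadratic terms) supplies the small constant $\varepsilon_0$ or the smallness from Poincaré. The net estimate is $D_{p_0}(w,\rho) \le C\,\varepsilon_0\, D_{p_0}(J,\rho) + C\varepsilon_0 E(J,1)$.

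Combining the two parts gives, for every $\theta\in(0,\tfrac12)$,
\begin{align*}
D_{p_0}(J,\theta\rho) \le D_{p_0}(h,\theta\rho) + D_{p_0}(w,\theta\rho)
\le C_1 \theta\, D_{p_0}(J,\rho) + C_2\,\varepsilon_0\, D_{p_0}(J,\rho) + C_2 \varepsilon_0 E(J,1).
\end{align*}
Now one first picks $\theta_0\in(0,\tfrac12)$ so that $C_1\theta_0 \le \tfrac12 \theta_0^\tau$, then picks $\varepsilon_0$ small (depending on $\theta_0,\tau$) so that $C_2\varepsilon_0 \le \tfrac14\theta_0^\tau$ and the final additive $E(J,1)$-term is absorbed (using $E(J,1)\le\varepsilon_0$ and, if needed, an elementary comparison $E(J,1)\le C D_{p_0}(J,1)$ after noting the $\nabla^2 J$ part can be reinserted by interpolation, or simply keeping an additive $\varepsilon_0$ which is harmless in the iteration of the next section). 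Taking $\rho=1$ yields \eqref{lem:S:decay}. The main obstacle I anticipate is the second-order part: the quantity $E(J,1)$ mixes $\|\nabla J\|_{L^4}$ and $\|\nabla^2 J\|_{L^2}$ but $D_{p_0}$ only sees $\nabla J$ in $L^{8/3}$, so one must be careful that the estimate for $w$ genuinely closes in terms of $D_{p_0}$ alone (plus the fixed small $\varepsilon_0$), rather than reproducing $\|\nabla^2 w\|_{L^2}$ on the right — this is where the choice $p_0=\tfrac83$ and the precise Lorentz-space convolution inequality of Lemma 2.2 are doing the real work, and verifying the exponent inequalities $\tfrac1p<\tfrac1q$ needed there for each term is the delicate bookkeeping. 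A secondary technical point is controlling the boundary data of $h$ and $w$ so that the splitting is legitimate for merely $W^{2,2}$ data; this is handled by first mollifying $J$, proving the estimate with uniform constants, and passing to the limit.
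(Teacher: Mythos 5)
Your overall plan (use the divergence structure supplied by $\lambda_0$, split $J$ into a biharmonic part and a part driven by the nonlinearity, and invoke the Lorentz convolution inequality of Lemma~\ref{lem:green}) is on the right track, but the decomposition you chose differs from the paper's, and two of your steps as stated do not cohere, so the argument has genuine gaps.

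\textbf{Decomposition.} You split $J=h+w$ on $B_\rho$ with $h$ biharmonic matching Cauchy data and $w$ solving the Dirichlet problem $\Delta^2 w = T_{\lambda_0}$ with zero boundary data --- the Chang--Wang--Yang style approach. The paper deliberately avoids this. It extends $J$ to $\widetilde J\in W^{2,2}(\R^4)\cap L^\infty$ with $\widetilde J\equiv\lambda_0$ outside $B_2$ (where $\lambda_0$ is the $B_1$-average of $J$), sets $\omega=G*\widetilde T_{\lambda_0}$ as a global convolution with the fundamental solution, and puts $v=J-\omega$. Since $\widetilde T_{\lambda_0}|_{B_1}=T_{\lambda_0}$, $v$ is biharmonic on $B_1$, and one then uses the interior estimate (Lemma~\ref{lem:m-ellpitic}) on $\nabla v$. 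No boundary data are matched, and no mollification of $J$ is required. The introduction states this explicitly: ``We use extension of maps $\dots$ instead of solving boundary value problem.''

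\textbf{Gap 1: the representation of $w$.} You write $w$ as a convolution with the fundamental solution $G$ and then appeal to Lemma~\ref{lem:green}. But if $w$ is to satisfy zero boundary conditions on $\partial B_\rho$, it must be represented by the ball's Green's function $G_{B_\rho}(x,y)$, which is not of convolution type $G(x-y)$; Lemma~\ref{lem:green} does not apply to it. You cannot simultaneously have a zero-boundary-data solution and a fundamental-solution convolution representation. This inconsistency is precisely what the extension-plus-$v=J-\omega$ splitting circumvents.

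\textbf{Gap 2: the exponent for $J-\lambda_0$ and the additive term.} You measure $\|J-\lambda_0\|_{L^4}$ and use Poincar\'e, producing $\rho\,\|\nabla J\|_{L^4}$, which is a piece of $E(J,1)$ rather than of $D_{p_0}$; this is why you end up with the additive $C\epsilon_0 E(J,1)$. But the lemma's conclusion has no additive term, and neither of your suggested repairs works: the inequality $E(J,1)\le C\,D_{p_0}(J,1)$ is false (the $\|\nabla^2 J\|_{L^2}$ part of $E$ is not controlled by $\|\nabla J\|_{L^{8/3}}$), and ``keeping an additive $\epsilon_0$'' proves a different statement. The paper's bookkeeping resolves this: one measures $\widetilde J-\lambda_0$ in $L^{q_1}$ with $q_1=8$, and then $\|\widetilde J-\lambda_0\|_{L^8}\le C\|\nabla\widetilde J\|_{L^{8/3}}$ by the Sobolev inequality on $\R^4$. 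Thus the $J-\lambda_0$ factor converts back into $\|\nabla J\|_{L^{p_0}(B_1)}$, and the remaining $\nabla^\beta J,\nabla^\gamma J$ (resp.\ $\nabla^\delta J$) factors each contribute a power of $E(J,1)$. The outcome is the purely multiplicative estimate $\|\nabla\omega\|_{L^{p_0}(B_1)}\le C\,E(J,1)\,\|\nabla J\|_{L^{p_0}(B_1)}$, with no additive remainder, and this is what makes the final absorption of $\theta$ and $\epsilon_0$ close exactly as required in \eqref{lem:S:decay}.
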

\begin{proof}

Firstly, by a simple calculation, we obtain
\begin{align}
|J|^2=\langle J, J \rangle
=\sum_{k,l} (J^k_l)^2=\sum_{k,l}(-J^k_l J^l_k)=4
\end{align}
where we used the condition (\ref{cond:J:euclidean}) in the last two equalities. Hence, $\|J\|_{L^\infty}=2<\infty$. For simplicity, we always denote by $C$ the positive constant independent of $J$.

Secondly, we need to extend $J$ to $\widetilde{J}\in W^{2,2}(\R^4, M_4(\R)) \cap L^\infty$ such that
\begin{align*}
\widetilde{J}|_{B_1}=J, \quad \widetilde{J}|_{\mathbb{R}^{4} \setminus B_2}= \lambda_0
\end{align*}
where $\lambda_0=\frac{1}{|B_1|}\int_{B_1} J$, and
\begin{align}
\| \widetilde{J}\|_{L^\infty(\mathbb{R}^{4})}
\leq& C\, \|J \|_{L^\infty(B_1)} \label{lem:S:ineqy:temp1} \\
\| \nabla \widetilde{J}\|_{L^{p_0}(\mathbb{R}^{4})}
\leq& C\, \| \nabla J\|_{L^{p_0}(B_1)} \label{lem:S:ineqy:temp2}\\
E(\widetilde{J},\infty) \leq& C\, E(J,1),\label{lem:S:ineqy:temp3}
\end{align}
where $p_0=\frac{8}{3} \in (1, 4)$. Now, let us show how to find such an extension $\widetilde{J}$. By applying the standard extension theorem to $J-\lambda_0$ in $B_1$, we deduce that there exists a function $\widetilde{J}-\lambda_0$ defined on $\mathbb{R}^{4}$ which has a compact support contained in $B_2$ and satisfies
\begin{align}
\|\widetilde{J}-\lambda_0 \|_{L^\infty(\mathbb{R}^{4})}
\leq& C \|J -\lambda_0\|_{L^\infty(B_1)}, \label{lem:S:ineqy:temp4}\\
\|\widetilde{J} -\lambda_0\|_{W^{1,p_0}(\mathbb{R}^{4})}
\leq& C \|J -\lambda_0 \|_{W^{1,p_0}(B_1)} \label{lem:S:ineqy:temp5}\\
\| \widetilde{J} -\lambda_0 \|_{W^{2,2}(\mathbb{R}^{4})}
\leq& C \| J- \lambda_0\|_{W^{2,2}(B_1)}. \label{lem:S:ineqy:temp6}
\end{align}
Note that, since $\widetilde{J}-\lambda_0$ has a compact support, (\ref{lem:S:ineqy:temp4}) implies $\widetilde{J}-\lambda_0 \in L^q(\mathbb{R}^{4})$ for all $q\in [1,\infty]$. Then (\ref{lem:S:ineqy:temp1}) follows immediately from (\ref{lem:S:ineqy:temp4}) and a simple application of Poinc\'are inequality to the right-hand side of (\ref{lem:S:ineqy:temp5}) yields (\ref{lem:S:ineqy:temp2}).
Since $\widetilde{J}-\lambda_0$ has a compact support, by Poinc\'are inequality, Sobolev inequality and (\ref{lem:S:ineqy:temp6}), we have
\begin{align*}
E(\widetilde{J},\infty)
=& E(\widetilde{J}-\lambda_0,\infty) \\
\leq& C \| \nabla^2 \widetilde{J} \|_{L^2(\mathbb{R}^{4})}
\leq C \| J- \lambda_0\|_{W^{2,2}(B_1)}
\leq C \| \nabla J\|_{W^{1,2}(B_1)}
\leq C E(J,1)
\end{align*}
which implies (\ref{lem:S:ineqy:temp3}).
Note that $\widetilde{J}$ may not satisfy the condition (\ref{cond:J:euclidean}) outside the unit ball $B_1$.

Thirdly, denote $G(x)=c \ln |x|$ to be the fundamental solution for $\Delta^2$ on $\mathbb{R}^{4}$, where $c$ is a constant. Then $\nabla^{4} G$ is a Calder\'on-Zygmund kernel. Let us define
\begin{align*}
\omega(x)
=&\int_{\mathbb{R}^{4}} G(x-y)
\widetilde{T}_{\lambda_0} (y) dy \\
=&\sum_{\alpha, \beta, \gamma}
\omega_{\alpha,\beta,\gamma}:= \int_{\R^4} \nabla^{\alpha}G(x-y)
\bigg( \big(\widetilde{J}(y)-\lambda_0 \big) \ast \nabla^\beta \widetilde{J}(y) \ast \nabla^\gamma \widetilde{J}(y) \bigg) dy\\
& + \sum_{\alpha, \delta}
\omega_{\alpha,\delta}:= \int_{\R^4} \nabla^{\alpha}G(x-y)
\bigg( \lambda_0 \ast \big(\widetilde{J}(y)-\lambda_0 \big) \ast \nabla^\delta \widetilde{J}(y) \bigg) dy
\end{align*}
where $\widetilde{T}_{\lambda_0}$ is defined by replacing $J$ by $\widetilde{J}$ in $T_{\lambda_0}$ (see (\ref{eqn:biharm:lambda})) , and $\alpha,\beta,\gamma,\delta$ are multi-indices such that $1\leq|\alpha| \leq 3$, $0\leq |\beta|,|\gamma|,|\delta|\leq 2$, $|\alpha|+|\beta|+|\gamma|=4$
and $|\alpha|+|\delta|=4$.
We claim that for $E(J,1)\leq 1$, there holds
\begin{align}\label{lem:S:decay:main}
\| \nabla \omega\|_{L^{p_0}(B_1)}
\leq C E(J,1) \| \nabla J \|_{L^{p_0}(B_1)}.
\end{align}
We will prove above inequality term by term. Firstly, we estimate the terms $\omega_{\alpha,\beta, \gamma}$. By Lemma \ref{lem:green}, we have
\begin{align*}
\|\nabla \omega_{\alpha,\beta, \gamma} \|_{L^{p_0}(\R^4)}
\leq& C \bigg \| |\widetilde{J}-\lambda_0|\, |\nabla^\beta \widetilde{J}| \,|\nabla^\gamma \widetilde{J}| \bigg\|_{L^{q_0}(\R^4)} \\
\leq& C \|\widetilde{J}-\lambda_0 \|_{L^{q_1}(\R^4)}
\|\nabla^\beta \widetilde{J} \|_{L^{\frac{4}{|\beta|}}(\R^4)}
\| \nabla^\gamma \widetilde{J}\|_{L^{\frac{4}{|\gamma|}}(\R^4)} \\
\leq& C \|\nabla \widetilde{J} \|_{L^{p_0}(\R^4)} E(\widetilde{J},\infty)^{N_{\beta,\gamma}} \\
\leq& C \|\nabla J \|_{L^{p_0}(B_1)} E(J,1)^{N_{\beta,\gamma}}
\end{align*}
where we let $\frac{4}{s}:=\infty$ for $s=0$, $N_{\beta,\gamma}$ stands for the number of non-zero elements in $\{\beta,\gamma\}$ and $q_0, q_1 \in (1,\infty)$ satisfy
\begin{align*}
\frac{1}{p_0}+1 =& \frac{|\alpha|+1}{4} + \frac{1}{q_0} \\
\frac{1}{q_0} =& \frac{1}{q_1} +\frac{|\beta|}{4}+ \frac{|\gamma|}{4}.
\end{align*}
Since $|\alpha|+|\beta|+|\gamma|=4$ and $1\leq|\alpha| \leq 3$, we know that $1\leq N_{\beta,\gamma}\leq 2$ and above two equations for $q_0$ and $q_1$ are solvable. Hence, if $E(J,1)\leq 1$, there holds
\begin{align}\label{lem:S:decay:temp1}
\|\nabla \omega_{\alpha,\beta, \gamma} \|_{L^{p_0}(B_1)}
\leq \|\nabla \omega_{\alpha,\beta, \gamma} \|_{L^{p_0}(\R^4)}
\leq C \|\nabla J \|_{L^{p_0}(B_1)} E(J,1).
\end{align}
By a similar argument, we also have
\begin{align}\label{lem:S:decay:temp2}
\|\nabla \omega_{\alpha,\delta} \|_{L^{p_0}(B_1)}
\leq \|\nabla \omega_{\alpha,\delta} \|_{L^{p_0}(\R^4)}
\leq C \|\nabla J \|_{L^{p_0}(B_1)} E(J,1).
\end{align}
Combining (\ref{lem:S:decay:temp1}) and (\ref{lem:S:decay:temp2}), we deduce (\ref{lem:S:decay:main}).

Finally, we turn to proving (\ref{lem:S:decay}). Let $v(x):=J(x)-\omega(x)$, then we know $v(x)$ is biharmonic on unit ball $B_1$, i.e., $\Delta^2 v(x)=0$. Since $\nabla v$ is also biharmonic, it follows from Lemma \ref{lem:m-ellpitic} (or see Lemma 6.2 in \cite{GS}) that there holds
\begin{align*}
\| \nabla v \|_{L^\infty(B_{\frac{1}{2}})}
\leq C \| \nabla v \|_{L^1(B_1)}.
\end{align*}
Hence, for any $\theta \in (0,\frac{1}{2})$ and $E(u,1)\leq 1$, there holds
\begin{align*}
D_{p_0}(J,\theta)
=& \theta^{1-\frac{4}{p_0}} \| \nabla J\|_{L^{p_0}(B_\theta)} \\
\leq& \theta^{1-\frac{4}{p_0}}
\big( \| \nabla \omega(x)\|_{L^{p_0}(B_\theta)}+ \|\nabla v(x)\|_{L^{p_0}(B_\theta)} \big)\\
\leq& C \theta^{1-\frac{4}{p_0}}
\big( \| \nabla \omega(x)\|_{L^{p_0}(B_\theta)}
+ \theta^{\frac{4}{p_0}}\|\nabla v(x)\|_{L^{\infty}(B_\theta)} \big)\\
\leq& C \theta^{1-\frac{4}{p_0}}
\big( \| \nabla \omega(x)\|_{L^{p_0}(B_\theta)}
+ \theta^{\frac{4}{p_0}}\|\nabla v(x)\|_{L^{\infty}(B_{\frac{1}{2}})} \big)\\
\leq& C \theta^{1-\frac{4}{p_0}}
\big( \| \nabla \omega(x)\|_{L^{p_0}(B_\theta)}
+ \theta^{\frac{4}{p_0}}\|\nabla v(x)\|_{L^{p_0}(B_1)} \big)\\
\leq& C \theta^{1-\frac{4}{p_0}}
\big( \| \nabla \omega(x)\|_{L^{p_0}(B_\theta)}
+ \theta^{\frac{4}{p_0}}\|\nabla \omega(x)\|_{L^{p_0}(B_1)}
+\theta^{\frac{4}{p_0}}\|\nabla J(x)\|_{L^{p_0}(B_1)} \big)\\
\leq& C \theta^{1-\frac{4}{p_0}}
\big( \| \nabla \omega(x)\|_{L^{p_0}(B_\theta)}
+\theta^{\frac{4}{p_0}}\|\nabla J(x)\|_{L^{p_0}(B_1)} \big)\\
\leq& C
\big(  \theta^{1-\frac{4}{p_0}}E(J,1) \| \nabla J(x)\|_{L^{p_0}(B_\theta)}
+\theta\|\nabla J(x)\|_{L^{p_0}(B_1)} \big)\\
\leq& C
\big(  \theta^{1-\frac{4}{p_0}} E(J,1) +\theta \big) D_{p_0}(J,1).
\end{align*}
Thus, for any give $\tau \in (0,1)$, by choosing $\theta=\theta_0$ and $\epsilon_0$ sufficiently small, we obtain (\ref{lem:S:decay}) for $E(J,1)\leq \epsilon_0$. The proof is complete.
\end{proof}

\subsection{Decay estimates for a class of semilinear elliptic equations}
It is easily seen that the property (\ref{eqn:biharm:lambda}) which biharmonic almost complex structures satisfy plays an important role in the proof of Lemma \ref{lem:decay_biharmonic}. Based on this observation, we can generalize the result in Section \ref{sec:decay_biharm} to a class of semilinear elliptic equations which admit simliar structures. To be more precise, we give the following definition.

\begin{definition}\label{def:good divergence form}
We say that the equation (\ref{eqn:semilinear}) admits a good divergence form if for any fixed constant vector $\lambda_0 \in \R^K$, $\Psi$ can be decomposed into two parts, the highest order term $\Psi_H$ and the lower order term $\Psi_L$, i.e.,
\begin{align}
\Psi=\Psi_H+\Psi_L.
\end{align}
which satisfy the following properties:
\begin{enumerate}
\item $\Psi_H $ is  a linear combination of the following terms
\begin{align}\label{def:structure}
\nabla^\alpha  ((u-\lambda_0) * h_{\alpha, \beta}),
\quad \mbox{with}\,\,\,
|h_{\alpha, \beta}| \leq C \prod_{i=1}^{s} \big|\nabla^{\beta_i} u \big|,
\end{align}
where $\alpha, \beta_i$ are multi-indices such that
\begin{align}
&|\alpha|+\sum_{i=1}^{s} |\beta_i|= 2m,  \label{def:cond:1}\\
&|\beta_i| \leq m,  i=1, \cdots , s, \quad s \in \mathbb{N}^+,  \label{def:cond:2}\\
&1 \leq \sum_{i=1}^s|\beta_i| \leq 2m-1, \label{def:cond:3}
\end{align}
and  $\beta=(\beta_1, \cdots, \beta_s)$. The notation $A*B$ means the composition of terms $A$ and $B$, such as $AB$ and $BA$.
\item $\Psi_L$ is a linear combination of the following three types of terms
\begin{equation}\label{def:structure:lower}
\begin{split}
&\nabla^{\alpha}(a_{\alpha,\gamma}(x)*\ell_{\alpha,\gamma}),
\quad \mbox{with}\,\,\,
|\ell_{\alpha,\gamma}| \leq C \prod_{i=1}^{s} \big|\nabla^{\gamma_i} u \big|, \\
&b_t(x)*\big(u(x)-\lambda_0 \big)*\ell_{0,t}),
\quad \mbox{with}\,\,\,
|\ell_{0,t}| \leq C |u|^t, \quad t \in \mathbb{N}, \\
& c(x)
\end{split}
\end{equation}
where $\gamma=(\gamma_1, \cdots, \gamma_s)$, $a_{\alpha,\gamma}(x), b_t(x), c(x) \in C^{2m}(\overline{B_1}, \R^K)$ and
\begin{align}
&|\alpha|+\sum_{i=1}^{s} |\gamma_i|\leq 2m-1,  \label{def:cond:4} \\
&|\gamma_i| \leq m,  i=1, \cdots , s, \quad s \in \mathbb{N}^+, \label{def:cond:5} \\
&\sum_{i=1}^{s} |\gamma_i| \geq 1. \label{def:cond:6}
\end{align}
\end{enumerate}
\end{definition}

\begin{remark}\label{rem:indices}
~
\begin{enumerate}
\item
The condition (\ref{def:cond:2}) and (\ref{def:cond:5}) are natural for us to define the weak solution to (\ref{eqn:semilinear}) for $u \in W^{m,2}$. Of courese, it will be more interesting for us to study the regularity of $W^{m,2}$ weak solution to (\ref{eqn:semilinear}) under the condition $n\geq 2m$. Otherwise, by Sobolev embedding theorem, $W^{m,2}$ can be embedded into $C^\alpha$ for $n<2m$.
\item
We point out that the condition (\ref{def:cond:3}) plays an important role in proving the H\"older continuousness of $u$ in critical dimension $n=2m$ under the structure (\ref{def:structure}) of $\Psi$. Moreover, the conditions (\ref{def:cond:1}) and (\ref{def:cond:3}) implies that for the highest order term $\Psi_H$ there always holds
\begin{align*}
1 \leq |\alpha|\leq 2m-1.
\end{align*}
However, $|\alpha|=0$ is valid for the lower order term $\Psi_L$.
\item
We claim that the terms in the form
\begin{align*}
g(x)*\nabla^{\alpha_1}u * \cdots * \nabla^{\alpha_t}u,
 \quad \mbox{where} \quad
 g(x) \in C^{4m}(\overline{B_1}, \R^K), \quad \sum_i |\alpha_i| \leq 2m-1
\end{align*}
can always be rewritten as a linear combination of terms (\ref{def:structure:lower}). The proof is quite straightforward. For convenience, we give the detailed proof. Without loss of generality we can assume $|\alpha_1| \geq |\alpha_2| \geq \cdots \geq |\alpha_t|$. If $|\alpha_1|\leq m$, the conclusion holds. If $|\alpha_1| > m$, then we set $\alpha_1=\alpha+\beta$ with $|\beta|=m$ and deduce that $|\alpha|< m$ and $|\alpha_i|+ |\alpha| <m$ ($i=2, \cdots, t$) due to the condition $\sum_i |\alpha_i| \leq 2m -1$. It follows that
\begin{align*}
& \, g(x)*\nabla^{\alpha_1}u * \cdots * \nabla^{\alpha_t}u \\
=& \nabla^\alpha
\bigg( g(x) \ast \nabla^\beta u \ast \nabla^{\alpha_2} u \ast \cdots \ast \nabla^{\alpha_t} u\bigg)
- \nabla^\alpha g(x) \ast \nabla^{\beta}u \ast \nabla^{\alpha_2} u \ast \cdots \ast \nabla^{\alpha_t}u \\
&- \sum_{i=2}^t g(x) \ast \nabla^\beta u \ast \nabla^{\alpha_2} u \ast \cdots \ast \nabla^{\alpha_i + \alpha} u \ast \cdots \ast \nabla^{\alpha_t} u
\end{align*}
which is the desired conclusion. Hence, the structure of the highest order term in  $\Psi$ is in some rough sense key to deciding whether the semilinear equation (\ref{eqn:semilinear}) admits a good divergence form.
\end{enumerate}
\end{remark}

\medskip

For any ball $B_r$ of radius $r$ centered at origin in $\mathbb{R}^n$, any $p>1$,  and $q_l\in (1,\infty)$ given by $\frac{1}{q_l}=\frac{1}{2}-\frac{m-l}{n}$ for $l=1,\cdots, m$ and $n\geq 2m$, denote
\begin{align}\label{eqn:normlised energy}
E(u,r)=\sum_{l=1}^m \big( r^{lq_l-n}\int_{B_r} |\nabla^l u|^{q_l} \big)^{\frac{1}{q_l}},
\end{align}
and
\begin{align}\label{eqn:Dp}
D_p(u,r)=\big( r^{p-n} \int_{B_r} |\nabla u|^p \big)^{\frac{1}{p}}.
\end{align}

\begin{lemma}\label{lem:decay estimates}
Suppose $n=2m$ and $u \in W^{m,2}(B_1, \R^K) \cap L^\infty $ satisfies (\ref{eqn:semilinear}) in distributional sense. If (\ref{eqn:semilinear}) admits a good divergence form  and $\|u\|_{L^\infty(B_1)} \leq \mathcal{B}<\infty$, then, given any $\tau \in (0,1)$, there exists $\epsilon_0>0$ and $\theta_0 \in (0, \frac{1}{2})$, which are only dependent of $\tau, \mathcal{B}, m$, such that if
\begin{align}
E(u,1)\leq \epsilon_0,
\end{align}
then we have
\begin{align}\label{lem:decay}
D_{p_0}(u,\theta_0)\leq \theta_0^{\tau} \big(D_{p_0}(u,1)+ \Lambda),
\end{align}
where $p_0=\frac{4m}{3} \in (1,2m)$ and
\begin{align}
\Lambda:=\sum_{\alpha, \gamma} \| a_{\alpha, \gamma}(x)\|_{L^\infty(B_1)}
+ \sum_t \| b_t(x) \|_{L^\infty(B_1)}+\| \nabla c(x) \|_{L^\infty(B_1)}
\end{align}
where $a_{\alpha, \gamma}(x), b_t(x), c(x)$ are from (\ref{def:structure:lower}) in lower order terms $\Psi_L$ of (\ref{eqn:semilinear}).
\end{lemma}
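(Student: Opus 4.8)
The plan is to mimic, in the general setting, the three-step scheme just carried out for the biharmonic case in Lemma~\ref{lem:decay_biharmonic}: extend, estimate a Green-potential remainder, and compare with a polyharmonic function. First I would fix $\lambda_0 = \frac{1}{|B_1|}\int_{B_1} u$ and, applying the standard Sobolev extension theorem to $u-\lambda_0$ on $B_1$, produce $\widetilde u \in W^{m,2}(\R^{2m},\R^K)\cap L^\infty$ with $\widetilde u \equiv \lambda_0$ outside $B_2$, $\widetilde u = u$ on $B_1$, and the bounds $\|\widetilde u\|_{L^\infty(\R^{2m})}\le C\mathcal B$, $\|\nabla\widetilde u\|_{L^{p_0}(\R^{2m})}\le C\|\nabla u\|_{L^{p_0}(B_1)}$ and $E(\widetilde u,\infty)\le C\,E(u,1)$, exactly as in the displayed estimates \eqref{lem:S:ineqy:temp1}--\eqref{lem:S:ineqy:temp3}; the Poincar\'e and Sobolev inequalities on the compactly supported function $\widetilde u-\lambda_0$ give these. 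Note that $\widetilde u$ need not satisfy \eqref{eqn:semilinear} outside $B_1$, but that is irrelevant below.

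Next I would set $\omega(x) = \int_{\R^{2m}} G(x-y)\,\widetilde\Psi(y)\,dy$, where $\widetilde\Psi$ is obtained from the good divergence form of $\Psi$ by replacing $u$ with $\widetilde u$ and $\lambda_0$ as above, so that $\omega$ collects one Green-potential term for each summand $\nabla^\alpha((u-\lambda_0)*h_{\alpha,\beta})$ of $\Psi_H$ and each of the three types of terms in $\Psi_L$. The key claim is the decay-type bound
\begin{align}\label{eqn:plan:omega}
\|\nabla\omega\|_{L^{p_0}(B_1)} \le C\,E(u,1)\,\|\nabla u\|_{L^{p_0}(B_1)} + C\,\Lambda,
\end{align}
valid once $E(u,1)\le 1$. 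For the highest-order contributions, I move the $\nabla^\alpha$ onto the kernel to get $\nabla^{\alpha}G * ((\widetilde u-\lambda_0)*h_{\alpha,\beta})$, apply Lemma~\ref{lem:green} with $k=|\alpha|+1$ (using $1\le|\alpha|\le 2m-1$ from Remark~\ref{rem:indices}, so $1\le k\le 2m$), and then H\"older-split the product: $\widetilde u-\lambda_0$ goes into an $L^{q_1}$ factor controlled by $\|\nabla\widetilde u\|_{L^{p_0}}$ (again via Poincar\'e, since it is compactly supported), and each $\nabla^{\beta_i}\widetilde u$ into an $L^{2m/|\beta_i|}$ factor controlled by $E(\widetilde u,\infty)$; solvability of the exponent equations follows from $|\alpha|+\sum|\beta_i| = 2m$ and $\sum|\beta_i|\ge 1$. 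Because $\Psi_H$ involves at least one genuine derivative factor, each such term carries at least one power of $E(\widetilde u,\infty)\le C\,E(u,1)$, which produces the small prefactor. For the lower-order terms of $\Psi_L$ one argues similarly but now $|\alpha|+\sum|\gamma_i|\le 2m-1$, so $k=|\alpha|+1\le 2m$ strictly and Lemma~\ref{lem:green} gains integrability; the bounded coefficients $a_{\alpha,\gamma},b_t$ and $\nabla c$ are pulled out in $L^\infty$, contributing the $\Lambda$ term, and the $c(x)$ term is handled after one integration by parts against $\nabla G$.

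Finally, set $v := u - \omega$ on $B_1$; then $\Delta^m v = 0$, so $\nabla v$ is polyharmonic and Lemma~\ref{lem:m-ellpitic} gives $\|\nabla v\|_{L^\infty(B_{1/2})}\le C\|\nabla v\|_{L^1(B_1)} \le C\|\nabla v\|_{L^{p_0}(B_1)} \le C(\|\nabla u\|_{L^{p_0}(B_1)} + \|\nabla\omega\|_{L^{p_0}(B_1)})$. Then for $\theta\in(0,\tfrac12)$,
\begin{align*}
D_{p_0}(u,\theta) \le \theta^{1-\frac{n}{p_0}}\big(\|\nabla\omega\|_{L^{p_0}(B_\theta)} + \theta^{\frac{n}{p_0}}\|\nabla v\|_{L^\infty(B_{1/2})}\big) \le C\big(\theta^{1-\frac{n}{p_0}}E(u,1) + \theta\big)D_{p_0}(u,1) + C\theta^{1-\frac{n}{p_0}}\Lambda,
\end{align*}
using \eqref{eqn:plan:omega} and $n=2m$; choosing $\theta_0$ small to beat $\theta^{1-n/p_0}$ against $\theta_0^\tau$ in the second term and then $\epsilon_0$ small to absorb $E(u,1)$, one obtains \eqref{lem:decay} (the $\Lambda$ is already premultiplied by a fixed power of $\theta_0$, so after relabelling it fits into $\theta_0^\tau(D_{p_0}(u,1)+\Lambda)$). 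The main obstacle is the bookkeeping in \eqref{eqn:plan:omega}: one must check that for \emph{every} admissible multi-index configuration allowed by \eqref{def:cond:1}--\eqref{def:cond:6} the Hardy--Littlewood--Sobolev exponents coming out of Lemma~\ref{lem:green} are in $(1,\infty)$ and that each highest-order term really retains a positive power of $E$ — this is where the structural conditions, especially $\sum|\beta_i|\ge 1$ in \eqref{def:cond:3} and $|\beta_i|\le m$ in \eqref{def:cond:2}, are used in an essential way, and it is the delicate part of the argument. Everything else is a routine adaptation of the biharmonic model computation.
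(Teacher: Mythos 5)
There is a genuine gap, and it concerns the constant term $c(x)$ from $\Psi_L$. You include $c$ in the Green potential $\omega$, claim the bound $\|\nabla\omega\|_{L^{p_0}(B_1)}\le C\,E(u,1)\,\|\nabla u\|_{L^{p_0}(B_1)}+C\,\Lambda$, and then in the final step you observe that ``the $\Lambda$ is already premultiplied by a fixed power of $\theta_0$.'' That fixed power is $1-\frac{n}{p_0}=1-\frac{2m}{4m/3}=-\frac12$, which is \emph{negative}, so $\theta_0^{1-n/p_0}\Lambda=\theta_0^{-1/2}\Lambda$ blows up as $\theta_0\to 0$ and cannot be absorbed into $\theta_0^\tau\big(D_{p_0}(u,1)+\Lambda\big)$. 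Every contribution to $\|\nabla\omega\|_{L^{p_0}(B_1)}$ must carry a factor of $E(u,1)$ so that the small $\epsilon_0$ can compensate the large $\theta_0^{-1/2}$. For the highest-order terms you correctly observe this; for the $a_{\alpha,\gamma}$-terms (via $\sum_i|\gamma_i|\ge 1$ in \eqref{def:cond:6}) and for the $b_t$-terms (via the Poincar\'e-controlled factor $\widetilde u-\lambda_0$) one also gets an $E(u,1)$ factor, giving the paper's sharper bound $\|\nabla\omega\|_{L^{p_0}(B_1)}\le C\big(E(u,1)\|\nabla u\|_{L^{p_0}(B_1)}+E(u,1)\Lambda\big)$, which you do not record but which is essential. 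The pure coefficient term $c(x)$, however, involves no derivative of $u$ at all and hence no smallness factor is available from $E(u,1)$; this is precisely why $c$ must \emph{not} be put into $\omega$.

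The paper's fix is to leave $c$ on the other side: with $\omega$ built only from $\Psi_H$ and the $a$-, $b$-terms of $\Psi_L$, the remainder $v=u-\omega$ satisfies $\Delta^m v=c(x)$ on $B_1$ (not $\Delta^m v=0$ as you write). One then applies the inhomogeneous interior estimate of Lemma \ref{lem:m-ellpitic} to $\nabla v$, which solves $\Delta^m(\nabla v)=\nabla c$, getting $\|\nabla v\|_{L^\infty(B_{1/2})}\le C\big(\|\nabla v\|_{L^1(B_1)}+\|\nabla c\|_{L^\infty(B_1)}\big)$. In the final scaling the $\|\nabla c\|_{L^\infty}$ piece is then multiplied by $\theta$ (a \emph{positive} power), not by $\theta^{1-n/p_0}$, so it can be dominated by $\theta_0^\tau\Lambda$. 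Apart from this point your plan coincides with the paper's proof --- extension of $u-\lambda_0$ (and of the coefficients $a,b$) to $\R^{2m}$ with the bounds \eqref{lem:ineqy:temp1}--\eqref{lem:ineqy:temp3}, term-by-term control of $\nabla\omega$ via Lemma \ref{lem:green}, and the comparison argument --- but the route through $c(x)$ has to be repaired as above, since the arithmetic you propose does not close.
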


\begin{proof}
Firstly, it should be pointed out that due to $n=2m$, we have
\begin{align*}
E(u,r)=\sum_{l=1}^m \big( \int_{B_r} |\nabla^l u|^{\frac{2m}{l}} \big)^{\frac{l}{2m}}.
\end{align*}
For simplicity, we always denote by $C$ a positive constant only dependent of $\tau,\mathcal{B},m$ in the following proof.

\medskip

Secondly, following the similar arguemnt in the proof of Lemma \ref{lem:decay_biharmonic}, we can extend $u$ to $\widetilde{u} \in W^{m,2}(\mathbb{R}^{2m}, \R^K) \cap L^\infty$ such that
\begin{align*}
\widetilde{u}|_{B_1}=u, \quad \widetilde{u}|_{\mathbb{R}^{2m} \setminus B_2}= \lambda_0
\end{align*}
where $\lambda_0=\frac{1}{|B_1|}\int_{B_1}u$, and
\begin{align}
\| \widetilde{u}\|_{L^\infty(\mathbb{R}^{2m})}
\leq& C\, \|u \|_{L^\infty(B_1)} \label{lem:ineqy:temp1} \\
\| \nabla \widetilde{u}\|_{L^{p_0}(\mathbb{R}^{2m})}
\leq& C\, \| \nabla u\|_{L^{p_0}(B_1)} \label{lem:ineqy:temp2}\\
E(\widetilde{u},\infty) \leq& C\, E(u,1),\label{lem:ineqy:temp3}
\end{align}
where $p_0=\frac{4m}{3} \in (1, 2m)$.

Of course, by a standard extension theorem to $a_{\alpha, \gamma}(x), b_t(x) \in C^{2m}(\overline{B_1}, \R^K)$ from the lower order term $\Psi_L$, there exist the corresponding functions $\widetilde{a}_{\alpha, \gamma}(x), \widetilde{b}_t(x)\in C_0^{2m}(\R^{2m}, \R^K)$ such that
\begin{align*}
&\widetilde{a}_{\alpha, \gamma}(x)|_{B_1}=a_{\alpha, \gamma}(x),
\quad  \widetilde{b}_t (x)|_{B_1}=b_t(x), \\
&\widetilde{a}_{\alpha, \gamma}(x)|_{\R^{2m} \setminus B_2}=0,
\quad \widetilde{b}_t (x) |_{\R^{2m} \setminus B_2}=0, \\
&\| \widetilde{a}_{\alpha, \gamma}(x)\|_{L^\infty(\R^{2m})}
\leq C \|a_{\alpha, \gamma}(x) \|_{L^\infty(B_1)},\\
&\| \widetilde{b}_t(x)\|_{L^\infty(\R^{2m})}
\leq C \| b_t(x)\|_{L^\infty(B_1)}. \\
\end{align*}

Thirdly, denote $G(x)=c_m \ln |x|$ to be the fundamental solution for $\Delta^m$ on $\mathbb{R}^{2m}$, where $c_m$ is a suitable constant only dependent of $m$. Then $\nabla^{2m} G$ is a Calder\'on-Zygmund kernel. Let us define
\begin{align*}
\omega(x)
=&\sum_{\alpha, \beta} \omega_{\alpha,\beta}(x)
:=\int_{\mathbb{R}^{2m}} \nabla^{\alpha} G(x-y)
\bigg(
\big( \widetilde{u}(y)-\lambda_0 \big) * \widetilde{h}_{\alpha,\beta}(y)
\bigg) dy \\
&+\sum_{\alpha, \gamma} \omega_{\alpha,\gamma}(x)
:=\int_{\mathbb{R}^{2m}} \nabla^{\alpha} G(x-y)
\bigg(
\widetilde{a}_{\alpha, \gamma}(y) * \widetilde{\ell}_{\alpha,\gamma}(y)
\bigg) dy \\
&+\sum_{t}\omega_{0,t}(x)
:= \int_{\mathbb{R}^{2m}} G(x-y)
\bigg(\widetilde{b}_t(y)*\big(\widetilde{u}(y)-\lambda_0 \big)*\widetilde{\ell}_{0,t}(y)
\bigg) dy
\end{align*}
We claim that, for  $p_0=\frac{4m}{3} \in (1,2m)$ and $E(u,1)\leq 1$, there holds
\begin{align}\label{lem:decay:main}
\|\nabla \omega\|_{L^{p_0}(B_1)} \leq C
\bigg(
E(u,1) \| \nabla u \|_{L^{p_0}(B_1)}
+ E(u,1)\cdot \Lambda
\bigg).
\end{align}

We will prove above inequality term by term. First of all, we deal with the terms $\omega_{\alpha, \beta}$. By Lemma \ref{lem:green}, we have
\begin{align*}
\| \nabla \omega_{\alpha, \beta} \|_{L^{q_0}(\R^{2m})}
\leq & C \bigg\|\big| \widetilde{u}-\lambda_0 \big| \cdot \big|\widetilde{h}_{\alpha,\beta} \big|
\bigg\|_{L^{q_{\alpha, \beta}}(\R^{2m})}  \\
\leq & C \|\widetilde{u}-\lambda_0 \|_{L^{q_1}(\R^{2m})}
\prod_{i=1}^s \|\nabla^{\beta_i} \widetilde{u} \|_{L^{\frac{2m}{|\beta_i|}}(\R^{2m})} \nonumber \\
\leq & C \|\nabla \widetilde{u}\|_{L^{q^*_1}(\R^{2m})}
\prod_{i=1}^s \|\nabla^{\beta_i} \widetilde{u} \|_{L^{\frac{2m}{|\beta_i|}}(\R^{2m})}
\end{align*}
where $\frac{2m}{|\beta_i|}:=\infty$ for $|\beta_i|=0$, and $q_0, q_1,q_1^*, q_{\alpha, \beta} \in (1,\infty)$ satisfy  \begin{align*}
1+\frac{1}{q_0}=&\frac{|\alpha|+1}{2m}+\frac{1}{q_{\alpha, \beta}}, \\
\frac{1}{q_{\alpha, \beta}}=&\frac{1}{q_1}+\frac{1}{2m} \sum_{i=1}^s |\beta_i|,\\
\frac{1}{q_1}=&\frac{1}{q_1^*}-\frac{1}{2m}.
\end{align*}
It is easy to check that the following values solve above three equations
\begin{align*}
\frac{1}{q_0}=\frac{1}{q_1^*}=\frac{3}{4m},
\quad \frac{1}{q_1}=\frac{1}{4m},
\quad \frac{1}{q_{\alpha, \beta}}=\frac{1}{2m}(\sum_{i=1}^s |\beta_i|+\frac{1}{2}).
\end{align*}
Note that, (\ref{def:cond:3}) implies $q_{\alpha, \beta}\in (1,\infty)$. Then we have, for $p_0=\frac{4m}{3}\in (1,2m)$, there holds
\begin{align*}
\| \nabla \omega_{\alpha, \beta} \|_{L^{p_0}(B_1)}
\leq \| \nabla \omega_{\alpha, \beta} \|_{L^{p_0}(\R^{2m})}
\leq & C \|\nabla \widetilde{u}\|_{L^{p_0}(\R^{2m})}
\prod_{i=1}^s \|\nabla^{\beta_i} \widetilde{u} \|_{L^{\frac{2m}{|\beta_i|}}(\R^{2m})}\\
\leq & C \|\nabla u\|_{L^{p_0}(B_1)} E(u,1)^{n_{\beta}} \|u \|_{L^\infty(B_1)}^{s-n_\beta} \\
\leq & C \mathcal{B}^{s-n_\beta} \|\nabla u\|_{L^{p_0}(B_1)} E(u,1)^{n_{\beta}}
\end{align*}
where $n_\beta=\big|\{\beta_i: \beta_i \neq 0 \}\big|\geq 1$.
Hence, if $E(u,1)\leq 1$, there holds
\begin{align}\label{lem:decay:high}
\| \nabla \omega_{\alpha, \beta} \|_{L^{p_0}(B_1)}
\leq C E(u,1) \|\nabla u\|_{L^{p_0}(B_1)}.
\end{align}

Now we proceed analogously to deal with terms $\omega_{\alpha, \gamma}$. Similarly, by Lemma \ref{lem:green}, we obtain that
\begin{align*}
\| \nabla \omega_{\alpha, \gamma} \|_{L^{q_0}(\R^{2m})}
\leq & C \bigg\|\big| \widetilde{a}_{\alpha, \gamma} \big| \cdot \big|\widetilde{\ell}_{\alpha,\gamma} \big|
\bigg\|_{L^{q_{\alpha, \gamma}}(\R^{2m})}  \\
\leq & C \big\| \widetilde{a}_{\alpha, \gamma} \big\|_{L^{q_1}(\R^{2m})}
\prod_{i=1}^s \big\|\nabla^{\gamma_i} \widetilde{u} \big\|_{L^{\frac{2m}{|\gamma_i|}}(\R^{2m})}
\end{align*}
where $\frac{2m}{|\gamma_i|}:=\infty$ for $|\gamma_i|=0$, and $q_0, q_1, q_{\alpha, \gamma} \in (1,\infty)$ satisfy
\begin{align*}
1+\frac{1}{q_0}=& \frac{|\alpha|+1}{2m}+\frac{1}{q_{\alpha, \gamma}}, \\
\frac{1}{q_{\alpha, \gamma}}=& \frac{1}{q_1}+ \frac{1}{2m} \sum_{i=1}^s |\gamma_i|.
\end{align*}
Let us take the following values to solve above two equations
\begin{align*}
\frac{1}{q_0}=\frac{1}{4m}, \quad
\frac{1}{q_{\alpha, \gamma}}=\frac{1}{4m}+1-\frac{|\alpha|+1}{2m}, \quad
\frac{1}{q_1}=\frac{1}{4m}+1-\frac{1}{2m} \big( |\alpha|+\sum_{i=1}^s |\gamma_i|+1\big).
\end{align*}
Due to (\ref{def:cond:4}), we know all above values are reasonable and $\frac{1}{q_1}\in [\frac{1}{4m}, \frac{4m-3}{4m}]$ especially. Hence, for $p_0=\frac{4m}{3} \in (1,2m)$, there holds
\begin{align*}
\| \nabla \omega_{\alpha, \gamma} \|_{L^{p_0}(B_1)}
\leq C \| \nabla \omega_{\alpha, \gamma} \|_{L^{q_0}(B_1)}
\leq&  C \big\| \widetilde{a}_{\alpha, \gamma} \big\|_{L^{q_1}(\R^{2m})}
\prod_{i=1}^s \big\|\nabla^{\gamma_i} \widetilde{u} \big\|_{L^{\frac{2m}{|\gamma_i|}}(\R^{2m})} \\
\leq& C \cdot \Lambda \cdot E(u,1)^{n_\gamma} \|u \|_{L^\infty(B_1)}^{s-n_\gamma}
\end{align*}
where $n_\gamma=\big|\{\gamma_i: \gamma_i \neq 0 \}\big|\geq 1$ due to (\ref{def:cond:6}).
Hence, if $E(u,1)\leq 1$, there holds
\begin{align}\label{lem:decay:lower-1}
\| \nabla \omega_{\alpha, \gamma} \|_{L^{p_0}(B_1)}
\leq C \cdot \Lambda \cdot E(u,1)
\end{align}
Similar argument applies to terms $\omega_{0,t}$ and yields
\begin{align}\label{lem:decay:lower-2}
\| \nabla \omega_{0, t} \|_{L^{p_0}(B_1)}
\leq C \cdot \Lambda \cdot E(u,1).
\end{align}
Combining (\ref{lem:decay:high}), (\ref{lem:decay:lower-1}) and (\ref{lem:decay:lower-2}) gives (\ref{lem:decay:main}).

\medskip

Finally, we are in a position to prove (\ref{lem:decay}). Denote $v(x):=u(x)-\omega(x)$, then we know $v(x)$ satisfies the following equation in distributional sense,
\begin{align*}
\Delta^m v(x)=c(x) \quad \mbox{on} \,\, B_1.
\end{align*}
Then, $c(x) \in C^{2m}(\overline{B_1},\R^K)$ implies $v(x) \in C^{4m}(B_1, \R^K)$. By Lemma \ref{lem:m-ellpitic}, we have
\begin{align}
\|\nabla v(x)\|_{L^{\infty}(B_{\frac{1}{2}})} \leq C
\big( \|\nabla v(x)\|_{L^1(B_1)} +\|\nabla c(x) \|_{L^\infty(B_1)} \big),
\end{align}
Hence, for any $\theta \in (0,\frac{1}{2})$ and $E(u,1) \leq 1$, there holds
\begin{align*}
D_{p_0}(u,\theta)= &\theta^{1-\frac{2m}{p_0}} \|\nabla u \|_{L^{p_0}(B_\theta)} \\
\leq & \theta^{1-\frac{2m}{p_0}} \|\nabla v \|_{L^{p_0}(B_\theta)}
+\theta^{1-\frac{2m}{p_0}} \|\nabla \omega \|_{L^{p_0}(B_\theta)}\\
\leq & C \theta \|\nabla v \|_{L^{\infty}(B_\theta)}
+\theta^{1-\frac{2m}{p_0}} \|\nabla \omega \|_{L^{p_0}(B_1)} \\
\leq & C \theta \|\nabla v \|_{L^{\infty}(B_{\frac{1}{2}})}
+\theta^{1-\frac{2m}{p_0}} \|\nabla \omega \|_{L^{p_0}(B_1)} \\
\leq & C \theta
\big( \|\nabla v \|_{L^{p_0}(B_1)}+\|\nabla c(x) \|_{L^\infty(B_1)} \big)
+\theta^{1-\frac{2m}{p_0}} \|\nabla \omega \|_{L^{p_0}(B_1)} \\
\leq & C \theta
\big( \|\nabla u \|_{L^{p_0}(B_1)}+\|\nabla \omega \|_{L^{p_0}(B_1)}+ \Lambda \big)
+\theta^{1-\frac{2m}{p_0}} \|\nabla \omega \|_{L^{p_0}(B_1)} \\
\leq & C \bigg(
\theta \big(\|\nabla u \|_{L^{p_0}(B_1)}+ \Lambda \big)
+\theta^{1-\frac{2m}{p_0}} \|\nabla \omega \|_{L^{p_0}(B_1)}
\bigg)\\
\leq & C \bigg(
\theta \big(\|\nabla u \|_{L^{p_0}(B_1)}+ \Lambda \big)
+\theta^{1-\frac{2m}{p_0}} E(u,1)
\big(\|\nabla u \|_{L^{p_0}(B_1)} + \Lambda \big)
\bigg)\\
\leq & C
 \bigg(\theta+ \theta^{1-\frac{2m}{p_0}} E(u,1)\bigg)
\big(\|\nabla u \|_{L^{p_0}(B_1)} + \Lambda \big) \\
\leq & C
 \bigg(\theta+ \theta^{1-\frac{2m}{p_0}} E(u,1)\bigg)
\big(D_{p_0}(u,1) + \Lambda \big).
\end{align*}
Thus, for any given $\tau \in (0,1)$, by choosing $\theta=\theta_0$ and $\epsilon_0$ sufficiently small, we obtain (\ref{lem:decay}) for $E(u,1)<\epsilon_0$. The proof is complete.
\end{proof}

\subsection{Higher regularity for a class of semilinear elliptic equations}
Now we turn to generalizing the higher regularity results of a class of semilinear elliptic equations in \cite{GS}. Since the proof of the following theorem is similar to that of Proposition 7.1 in \cite{GS}, we only give the modifications that is essential to the proof.

\begin{theorem}\label{thm:higher regularity}
Suppose $n\geq 2m$ and $u\in W^{m,2}(B_1,\R^K) \cap C^{0,\mu}$ satisfies (\ref{eqn:semilinear}) in distributional sense, where $\Psi$ can be divided into two parts: the highest order terms $H$ and lower order terms $L$, i.e., $\Psi=H+L$, which admit the following structures:
\begin{align}\label{thm:HR:eqn:H}
H=\sum_{k=0}^{m-1} \nabla^k \cdot g_k,
\quad \mbox{where} \quad
|g_k| \leq C \sum_{l=1}^m |\nabla^l u|^{\frac{2m-k}{l}},
\end{align}
and
\begin{align}\label{thm:HR:eqn:L}
L=\sum_{k=0}^{m-1} \nabla^k \cdot \widetilde{g}_k,
\quad \mbox{where} \,\,
|\widetilde{g}_k| \leq C \sum_{\gamma} \bigg( \prod_{i} |\nabla^{\gamma_i}u| \bigg)
\,\, \mbox{with} \,\,
\sum_{i}|\gamma_i| \leq 2m-1-k.
\end{align}
Then, $u\in C^\infty(B_1, \R^K)$.
\end{theorem}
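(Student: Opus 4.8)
The plan is to run a bootstrap argument, starting from the H\"older continuity $u \in C^{0,\mu}$ and upgrading the regularity of $u$ step by step until one can iterate the classical Schauder/Calder\'on--Zygmund theory for $\Delta^m$ and conclude $u \in C^\infty$. The starting observation is that the divergence structure $H + L = \sum_{k=0}^{m-1} \nabla^k \cdot (g_k + \widetilde g_k)$ means that, testing against $\Delta^{-m}$ (i.e.\ convolving with the fundamental solution $G$ of $\Delta^m$ as in Lemma~\ref{lem:green}), each application of $\nabla^k \cdot$ costs $k+1$ derivatives of $G$, which for $k+1 \le 2m$ lies in the weak-$L^{2m/(k+1)}$ space. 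So morally $u = \sum_k \nabla^{k+1} G * (g_k + \widetilde g_k) + (\text{smooth})$, and the task is to show that the right-hand side has strictly better integrability/continuity than what we feed in, then feed the improvement back.

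The key steps, in order, would be: (1) \emph{Localize and set up the representation formula.} Multiply $u$ by a cutoff, absorb commutator terms (which are lower order and hence of type $L$), and write $u = \omega + v$ on a smaller ball, where $\Delta^m v$ is as smooth as the coefficients in $L$ (so $v \in C^{4m}$ by Lemma~\ref{lem:m-ellpitic} and elliptic theory) and $\omega = \sum_{k} \int \nabla^{k+1} G(x-y)\,(g_k+\widetilde g_k)(y)\,dy$. (2) \emph{First integrability gain.} Using $u \in C^{0,\mu} \cap W^{m,2}$ and the Gagliardo--Nirenberg-type interpolation $\|\nabla^l u\|_{L^{2m/l}}$-control hidden in $E(u,r)$, estimate $g_k \in L^{q_k}$ and $\widetilde g_k \in L^{\widetilde q_k}$ for exponents strictly better than the critical ones (this is exactly where the condition $\sum_i|\gamma_i| \le 2m-1-k$ in \eqref{thm:HR:eqn:L} and the exponent bookkeeping $|g_k| \le C\sum_l |\nabla^l u|^{(2m-k)/l}$ in \eqref{thm:HR:eqn:H} are used, together with the H\"older norm playing the role of an $L^\infty$-type factor to soak up a small power). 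Then Lemma~\ref{lem:green} gives $\nabla u \in L^{p_1}$ with $p_1 > p_0$, and interpolation/Sobolev gives $\nabla^l u$ in correspondingly better spaces. (3) \emph{Iterate.} Re-insert the improved integrability of all the $\nabla^l u$ into the estimates for $g_k, \widetilde g_k$; after finitely many steps one reaches $\nabla^l u \in L^p$ for all $p < \infty$ and all $l \le m$, hence $g_k, \widetilde g_k \in L^p$ for all $p<\infty$, hence (Calder\'on--Zygmund for $\nabla^{2m}G$, i.e.\ the $k=2m$ case of Lemma~\ref{lem:green} stated in its proof) $u \in W^{2m,p}$ for all $p$, so $u \in C^{2m-1,\alpha}$. (4) \emph{Schauder bootstrap to $C^\infty$.} Once $u \in C^{2m-1,\alpha}$, every $g_k, \widetilde g_k \in C^{m-1,\alpha}$ at least, and differentiating the equation together with interior $L^p$ / Schauder estimates for $\Delta^m$ repeatedly yields $u \in C^{k,\alpha}$ for every $k$.

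The main obstacle I expect is step (2)--(3): the careful accounting of Lebesgue exponents showing that each pass through Lemma~\ref{lem:green} produces a \emph{strict} gain, uniformly controllable so that the iteration terminates. One must check that for every admissible multi-index configuration allowed by \eqref{thm:HR:eqn:H}--\eqref{thm:HR:eqn:L} the target exponent $q$ in Lemma~\ref{lem:green} stays in $(1,\infty)$ and that the hypothesis $1 + \frac{1}{p} = \frac{k}{2m} + \frac{1}{q}$ can be met with $p$ strictly larger than the current exponent; the H\"older assumption $u \in C^{0,\mu}$ is what provides the ``slack'' at the bottom of the ladder, exactly as in \cite{GS}. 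The remaining content is to verify that the commutator terms generated by localization, and the extra factors of the background metric when $M$ is not Euclidean, are all of the lower-order type covered by \eqref{thm:HR:eqn:L}, so that the structural hypotheses are preserved under localization; this is routine but must be stated. Since all of this parallels Proposition~7.1 of \cite{GS}, I would only spell out the exponent computation in step (2) and the claim that the structure is preserved, and refer to \cite{GS} for the rest.
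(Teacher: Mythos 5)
Your proposal takes a genuinely different route from the paper, and the route as described has a gap at precisely the critical terms.

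The paper's proof follows the Morrey–Campanato iteration of Gastel–Scheven, \emph{not} an $L^p$ bootstrap. It cites \cite{GS} for the case $\Psi=H$ and then reduces the case $\Psi=L$ to two claims: (1) a Caccioppoli-type estimate, proved by testing the equation against $\eta^{2m}(u-\overline u)$, which converts the H\"older bound $\|u-\overline u\|_{L^\infty(B_\rho)}\lesssim \rho^\mu$ into the scale-dependent Morrey decay
$\sup_{B_\rho(x)\subset B_R}\rho^{2m-n-2\mu}\int_{B_\rho(x)}|\nabla^m u|^2<\infty$;
and (2) a step that upgrades H\"older regularity $\nu\mapsto \frac{m+1}{m}\nu$, using Taylor-polynomial approximation $\|u-P_x\|_{L^\infty(B_\rho)}\lesssim\rho^\nu$ and Gagliardo--Nirenberg to show $\widetilde g_k$ decays in a suitable Morrey norm. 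The quantity being improved at each stage is the decay rate in $\rho$, i.e.\ a Morrey/Campanato exponent, not a Lebesgue exponent on a fixed ball.

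The gap in your plan is in steps (2)--(3), exactly where you flag the obstacle. Consider the highest-order contribution with $l=m$: $|\nabla^m u|^{(2m-k)/m}$ lies in $L^{2m/(2m-k)}$, and Lemma~\ref{lem:green} applied to $\nabla^{k+1}G$ then returns $\nabla u\in L^{2m}$, which is \emph{exactly} what the Sobolev embedding of $W^{m,2}$ already gives (when $n=2m$; the supercritical case is no better). So there is no strict $L^p$ gain, and the iteration never starts. The assumption $u\in C^{0,\mu}$ does not give improved $L^p$ integrability of $\nabla^m u$ on a fixed ball; what it does give, via the Caccioppoli estimate (paper's Claim~(1)), is \emph{decay} of $\int_{B_\rho}|\nabla^m u|^2$ as $\rho\to 0$, and it is this decay, propagated through the representation formula in Morrey/Campanato norms rather than $L^p$ norms, that produces the strict gain. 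Your phrase ``the H\"older norm playing the role of an $L^\infty$-type factor to soak up a small power'' gestures at this but does not supply a mechanism; an $L^\infty$ factor cannot beat criticality of $|\nabla^m u|^2$. (Your interpolation argument does give $\nabla^l u$ in a space strictly better than $L^{2m/l}$ for $l<m$, but not for $l=m$, and the $l=m$ term is present in every $g_k$ allowed by~\eqref{thm:HR:eqn:H}.) To repair the argument one should either track Morrey norms through the Green's-function representation rather than Lebesgue norms, or, as the paper does, prove the Caccioppoli/Morrey bound first and then feed it into the GS iteration scheme. Your localization and structure-preservation remarks in step (1) and the final Schauder bootstrap in step (4) are fine and essentially match the paper.
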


\begin{proof}
For $m=1$, (\ref{eqn:semilinear}) is just the second order semilinear equation and the conclusion holds obviously. Thus, we focus on the case $m \geq 2$ in the following proof.

\medskip
It is clear that Gastel and Scheven in \cite{GS} proved the theorem in the case $\Psi=H$. According to the proof of Proposition 7.1 in \cite{GS}, it suffices to prove the following two claims in the case $\Psi=L$:
\begin{enumerate}
\item [(1)]
\begin{align} \label{lem:smooth:claim-1}
\sup_{B_\rho(x)\subset B_R} \rho^{2m-n-2\mu}
\int_{B_\rho(x)}|\nabla^m u|^2  <\infty, \quad \forall\, \,0<R<1,
\end{align}
\item [(2)] For every non-integer $\nu:=[\nu]+\sigma \in (0,m)$, if $u\in C^{[\nu],\sigma}(B_1, \R^K)$ and
\begin{align}\label{lem:smooth:claim-2-cond}
\sup_{B_\rho(x)\subset B_R} \rho^{2m-n-2\nu}
\int_{B_\rho(x)}|\nabla^m u|^2  <\infty, \quad \forall \,0<R<1,
\end{align}
then we have that, for $0\leq k\leq m-1$ and $B_\rho(x) \subset B_R$, there holds
\begin{align}\label{lem:smooth:claim-2}
\bigg( \rho^{2m-n}\int_{B_\rho(x)} |\widetilde{g}_k|^{\frac{2m}{2m-k}} \bigg)^{\frac{2m-k}{2m}}
\leq C \rho^{\frac{m+1}{m}\nu}
\end{align}
\end{enumerate}

Before proceeding to prove claims, we make some conventions: fix $R\in (0,1)$,  always assume $B_\rho(x) \subset B_R$, and $C$ stand for the positive constants only dependent of $m,n,\|u\|_{C^{0,\mu}(B_R)}$.

\medskip
We first prove the Claim (1) by standard integral estimates. Since $u\in C^{0,\mu}(B_1)$, we have
\begin{align*}
\| u-\overline{u}\|_{L^\infty(B_\rho(x))} \leq C [u]_{\mu;B_R} \rho^\mu \leq C \rho^\mu,
\end{align*}
where $\overline{u}=\frac{1}{|B_\rho(x)|}\int_{B_\rho(x)} u(y)$. To simplify the proof in the following estimate, we assume $\| u-\overline{u}\|_{L^\infty(B_\rho(x))} \leq 1$.

By Gagliardo–Nirenberg interpolation inequality, we have that, for $1\leq l\leq m-1$, there holds
\begin{align}\label{lem:smooth:ineqy:Nirenberg1}
\begin{split}
\bigg( \rho^{2m-n} \int_{B_\rho(x)}|\nabla^l u|^{\frac{2m}{l}} \bigg)^{\frac{l}{2m}}
\leq & C \| u-\overline{u}\|_{L^\infty}^{1-\frac{l}{m}}
\bigg( \rho^{2m-n} \int_{B_\rho(x)}|\nabla^m u|^2 \bigg)^{\frac{l}{2m}} \\
&\, +C \| u-\overline{u}\|_{L^\infty}.
\end{split}
\end{align}
It follows that
\begin{align}\label{lem:smooth:ineqy:Nirenberg2}
\int_{B_\rho(x)} |\nabla^l u|^{\frac{2m}{l}} \leq& C
\| u-\overline{u}\|_{L^\infty}^{(1-\frac{l}{m})\frac{2m}{l}}
\int_{B_\rho(x)}|\nabla^m u|^2
+ C \rho^{n-2m}\| u-\overline{u}\|_{L^\infty}^{\frac{2m}{l}} \nonumber \\
\leq& C \| u-\overline{u}\|_{L^\infty}^{\frac{2}{m}}
\int_{B_\rho(x)}|\nabla^m u|^2
+ C \rho^{n-2m}\| u-\overline{u}\|_{L^\infty}^{2}.
\end{align}
On the other hand, by H\"older's inequality, it follows from (\ref{lem:smooth:ineqy:Nirenberg1}) that, for $1\leq l \leq m-1$ and $q \in [1,\frac{2m}{l}]$
\begin{align}\label{lem:smooth:ineqy:Lq}
\begin{split}
\bigg( \rho^{lq-n} \int_{B_\rho(x)}|\nabla^l u|^{q} \bigg)^{\frac{1}{q}}
\leq & C \| u-\overline{u}\|_{L^\infty}^{1-\frac{l}{m}}
\bigg( \rho^{2m-n} \int_{B_\rho(x)}|\nabla^m u|^2 \bigg)^{\frac{l}{2m}} \\
&\, +C \| u-\overline{u}\|_{L^\infty}.
\end{split}
\end{align}

\medskip

We choose a cut-off function $\eta\in C^\infty_0(B_\rho(x),[0,1])$ such that
\begin{align*}
\eta|_{B_{\frac{\rho}{2}}(x)} \equiv 1 \quad \mbox{and} \quad
\|\nabla^l \eta\|_{L^\infty} \leq C \rho^{-l}, \quad \forall \, l\in \mathbb{N}.
\end{align*}

Testing (\ref{eqn:semilinear}) with $\eta^{2m}(u-\overline{u})$, we compute
\begin{align}
\int \eta^{2m} |\nabla^m u|^2 dy
\leq& C \sum_{k=0}^{m-1} \int |\nabla^m u| \cdot |\nabla^k(u-\overline{u})| \cdot |\nabla^{m-k} \eta^{2m}| \nonumber \\
&+ C \sum_{k=0}^{m-1} \sum_{j=0}^{k} \int |\nabla^j (u-\overline{u})| \cdot |\nabla^{k-j} \eta^{2m}| \cdot |\widetilde{g}_k| \nonumber \\
&=: \sum_{k=0}^{m-1} I_k + \sum_{k=0}^{m-1} \sum_{j=0}^{k} II_{kj}. \label{lem:smooth:temp-0}
\end{align}
We will deal with above inequality term by term. Let us estimate $I_0$ :
\begin{align}
I_0 \leq& C \int |\nabla^m u| \cdot |u-\overline{u}| \cdot |\nabla^{m} \eta^{2m}| \nonumber \\
\leq& C \rho^{-m} \|u-\overline{u}\|_{L^\infty} \int |\nabla^m u| \eta^m \nonumber\\
\leq& C \rho^{\frac{n}{2}-m} \|u-\overline{u}\|_{L^\infty}
\bigg(\int \eta^{2m} |\nabla^m u|^2\bigg)^{\frac{1}{2}}  \nonumber \\
\leq& \epsilon_1 \int \eta^{2m} |\nabla^m u|^2
+ C_{\epsilon_1} \rho^{n-2m}\|u-\overline{u}\|_{L^\infty}^2 \nonumber \\
\leq& \epsilon_1 \int \eta^{2m} |\nabla^m u|^2
+ C_{\epsilon_1} \rho^{n-2m+2\mu}
\label{lem:smooth:temp-I0}
\end{align}
where $\epsilon_1>0$ will be determined later. For $1\leq k \leq m-1$, we obtain
\begin{align}
I_k \leq& C \rho^{k-m} \int |\nabla^m u| \cdot |\nabla^k u| \cdot \eta^{m+k} \nonumber \\
\leq& C \rho^{k-m}
\bigg(\int \eta^{2m} |\nabla^m u|^2\bigg)^{\frac{1}{2}}
\bigg(\int \eta^{2k} |\nabla^k u|^2\bigg)^{\frac{1}{2}}
\nonumber \\
\leq& \epsilon_1 \int \eta^{2m} |\nabla^m u|^2
+ C_{\epsilon_1} \rho^{2k-2m} \int \eta^{2k} |\nabla^k u|^2 \nonumber \\
\leq& \epsilon_1 \int \eta^{2m} |\nabla^m u|^2
+ C_{\epsilon_1} \rho^{2k-2m}
\bigg( \epsilon_2 \rho^{2m-2k} \int_{B_\rho(x)}|\nabla^m u|^2
+ C_{\epsilon_2} \rho^{n-2k}\| u-\overline{u}\|_{L^\infty}^2\bigg) \nonumber \\
\leq& \epsilon_1 \int \eta^{2m} |\nabla^m u|^2
+ \epsilon_2 C_{\epsilon_1} \int_{B_\rho(x)}|\nabla^m u|^2
+ C_{\epsilon_1}C_{\epsilon_2} \rho^{n-2m}\| u-\overline{u}\|_{L^\infty}^2 \nonumber \\
\leq& C\epsilon_1 \int \eta^{2m} |\nabla^m u|^2
+ \epsilon_2 C_{\epsilon_1} \int_{B_\rho(x)}|\nabla^m u|^2
+ C_{\epsilon_1}C_{\epsilon_2} \rho^{n-2m+2\mu}
\label{lem:smooth:temp-Ik}
\end{align}
where we use (\ref{lem:smooth:ineqy:Lq}) with $q=2$ and Young's inequality in the fourth inequality, and $\epsilon_2>0$ will be determined later.
Next, we estimate $II_{00}$ as follows
\begin{align}
II_{00}
\leq& C \sum_\gamma \int |u-u| \cdot \eta^{2m} \cdot \prod_{i}|\nabla^{\gamma_i}u| \nonumber \\
\leq&  C \sum_\gamma \rho^{\frac{n}{p_0}}\cdot \| u-\overline{u}\|_{L^\infty}
\prod_i \| \eta^{|\gamma_i|}\nabla^{\gamma_i} u\|_{L^{\frac{2m}{|\gamma_i|}}} \nonumber \\
\leq& C \| u-\overline{u}\|_{L^\infty}
\bigg(\rho^n + \sum_{\gamma_i \neq 0} \int \eta^{2m} |\nabla^{|\gamma_i|} u|^{\frac{2m}{|\gamma_i|}}\bigg) \nonumber \\
\leq& C \| u-\overline{u}\|_{L^\infty}
\bigg(\rho^n + \sum_{l=1}^m \int_{B_{\rho}(x)} |\nabla^l u|^{\frac{2m}{l}}\bigg) \nonumber \\
\leq& C \| u-\overline{u}\|_{L^\infty} \int_{B_\rho(x)} |\nabla^m u|^2
+ C \rho^n \| u-\overline{u}\|_{L^\infty}  \nonumber \\
& + C
\bigg(
\| u-\overline{u}\|_{L^\infty}^{1+\frac{2}{m}}
\int_{B_\rho(x)} |\nabla^m u|^2
+\rho^{n-2m} \| u-\overline{u}\|_{L^\infty}^{3}
\bigg) \nonumber \\
\leq& C \rho^{\mu} \int_{B_\rho(x)} |\nabla^m u|^2 + C \rho^{n-2m+2\mu}
\label{lem:smooth:temp-II0}
\end{align}
where we use (\ref{lem:smooth:ineqy:Nirenberg2}) in the fifth inequality, and
\begin{align*}
1=\frac{1}{p_0}+\frac{1}{2m}\sum_i |\gamma_i|.
\end{align*}
Note that, due to $\sum_i |\gamma_i| \leq 2m-1$, it follows that $p_0 \in [1, 2m]$.
Similar arguments apply to $II_{k,0}$ and we obtain, for $1\leq k \leq m-1$,
\begin{align}
II_{k0} \leq C \rho^{\mu} \int_{B_\rho(x)} |\nabla^m u|^2 + C \rho^{n-2m+2\mu}.
\end{align}
By H\"older's inequality and Young's inequality, we have that, for $1\leq k \leq m-1$,
\begin{align}\label{lem:smooth:temp-II}
\int \eta^{2m} |\widetilde{g}_k|^{\frac{2m}{2m-k}}
\leq& \epsilon_2 \int_{B_\rho(x)} |\nabla^m u|^2 + C_{\epsilon_2}
\bigg( \rho^n + \sum_{l=1}^{m-1} \int_{B_\rho(x)} |\nabla^l u|^{\frac{2m}{l}} \bigg) \nonumber \\
\leq& (\epsilon_2+C_{\epsilon_2} \rho^{\frac{2\mu}{m}}) \int_{B_\rho(x)} |\nabla^m u|^2 + C_{\epsilon_2} \rho^{n-2m+2\mu},
\end{align}
where we apply (\ref{lem:smooth:ineqy:Nirenberg2}) in second inequality. Now let us turn to estimating $II_{kj}$ with $1\leq j <k\leq m-1$,
\begin{align} \label{lem:smooth:temp-IIkj}
II_{kj} \leq& C \rho^{j-k} \int |\widetilde{g}_k| \cdot |\nabla^j u| \cdot \eta^{2m+j-k} \nonumber \\
\leq& C \rho^{j-k} \| \eta^j \nabla^j u\|_{L^{\frac{2m}{k}}}
\| \eta^{2m-k} \widetilde{g}_k\|_{L^{\frac{2m}{2m-k}}} \nonumber \\
\leq& C  \rho^{(j-k)\frac{2m}{k}} \int_{B_\rho(x)} |\nabla^j u|^{\frac{2m}{k}}
+ C \int \eta^{2m} |\widetilde{g}_k|^{\frac{2m}{2m-k}} \nonumber \\
\leq & C (\epsilon_2+C_{\epsilon_2}\rho^{\frac{2\mu}{m}}) \int_{B_\rho(x)} |\nabla^m u|^2
+ C_{\epsilon_2} \rho^{n-2m+2\mu}
\end{align}
where we use (\ref{lem:smooth:ineqy:Lq}) with $q=\frac{2m}{k}$ and (\ref{lem:smooth:temp-II}) in the last inequality. Similarly, we obtain, for $1\leq k \leq m-1$
\begin{align} \label{lem:smooth:temp-IIkk}
II_{kk}\leq (\epsilon_2 + C_{\epsilon_2} \rho^{\frac{2\mu}{m}})
\int_{B_\rho(x)} |\nabla^m u|^2
+ C_{\epsilon_1} \rho^{n-2m+2\mu}.
\end{align}

Combining above all estimates, we deduce that
\begin{align*}
\int \eta^{2m} |\nabla^m u|^2 dy
\leq&  C\epsilon_1 \int \eta^{2m} |\nabla^m u|^2
+ C (\epsilon_2+C_{\epsilon_2}\rho^{\frac{2\mu}{m}}) \int_{B_\rho(x)} |\nabla^m u|^2
+ C_{\epsilon_1,\epsilon_2} \rho^{n-2m+2\mu}.
\end{align*}
Thus, by choosing $\epsilon_1, \epsilon_2, \rho_0$ small enough, we have that, for all $\rho\leq \rho_0$, there holds
\begin{align*}
\int_{B_{\frac{\rho}{2}}} |\nabla^m u|^2 \leq \varepsilon
\int_{B_{\rho}} |\nabla^m u|^2 + C \rho^{n-2m+2\mu}
\end{align*}
where $\varepsilon < 2^{2m-n-2\mu}$ is a fixed positive number. A standard iteration argument implies (\ref{lem:smooth:claim-1}).

\medskip
The task is now to prove Claim (2). Since $u \in C^{[\nu],\sigma}(B_1)$ with $\nu=[\nu]+\sigma$, we know that, there exists a Taylor polynomials $P_x$ at the points $x$ such that
\begin{align*}
\| u \|_{C^{[\nu],\sigma}(B_R)}\leq C <\infty,\quad
\|u-P_x \|_{L^\infty(B_\rho(x))}\leq C \rho^{\nu}.
\end{align*}
By Gagliardo–Nirenberg interpolation inequality and (\ref{lem:smooth:claim-2-cond}), we have that, for $\nu< l \leq m$, there holds
\begin{align*}
\bigg( \rho^{2m-n} \int_{B_\rho(x)} |\nabla^l u|^{\frac{2m}{l}} \bigg)^{\frac{l}{2m}}
\leq& C \|u-P_x \|_{L^\infty(B_\rho(x))}^{1-\frac{l}{m}}
\bigg( \rho^{2m-n} \int_{B_\rho(x)} |\nabla^m u|^{2} \bigg)^{\frac{l}{2m}}  \\
& \,\,+C \|u-P_x \|_{L^\infty(B_\rho(x))} \\
\leq & C \rho^\nu.
\end{align*}
Let us compute
\begin{align}
\bigg( \rho^{2m-n}\int_{B_\rho(x)} |\widetilde{g}_k|^{\frac{2m}{2m-k}} \bigg)^{\frac{2m-k}{2m}}
\leq& C \rho^{(2m-n) \frac{2m-k}{2m}}\cdot \rho^{\frac{n}{q_0}}
\prod_{|\gamma_i|>\nu} \big\| \nabla^{|\gamma_i|} u \big\|_{L^{\frac{2m}{|\gamma_i|}}(B_{\rho}(x))} \nonumber \\
\leq & C \rho^{\tau}
\end{align}
where
\begin{align}\label{lem:smooth:calim2-temp}
 \frac{1}{q_0}+ \frac{1}{2m}\sum_{|\gamma_i|>\nu} |\gamma_i|=\frac{2m-k}{2m}
\end{align}
and
\begin{align*}
\tau=(2m-n) \cdot \frac{2m-k}{2m} +  \frac{n}{q_0}
+ \sum_{|\gamma_i|>\nu} \big(\nu +\frac{n-2m}{2m}|\gamma_i| \big).
\end{align*}
Combining above two identities yields
\begin{align*}
\tau=&(2m-n) \cdot \frac{2m-k}{2m}+  \frac{n}{q_0} + n_{\nu} \nu
+(n-2m) \bigg( \frac{2m-k}{2m}-\frac{1}{q_0} \bigg) \\
=&\frac{2m}{q_0}+n_\nu \nu.
\end{align*}
where $n_{\nu}=\big|\{\gamma_i: |\gamma_i|>\nu \}\big|$.

We claim that
\begin{align}\label{lem:smooth:kappa}
\tau\geq \frac{m+1}{m} \nu.
\end{align}
which implies (\ref{lem:smooth:claim-2}).
Obviously, (\ref{lem:smooth:kappa}) holds for $n_\nu \geq 2$.
For $n_\nu =0$, (\ref{lem:smooth:calim2-temp}) implies $\frac{1}{q_0}=\frac{2m-k}{2m}$. Hence, for $\nu \in (0,m)$
\begin{align*}
\tau=\frac{2m}{q_0}=2m-k\geq m+1\geq \frac{m+1}{m} \nu.
\end{align*}
For $n_\nu=1$, (\ref{lem:smooth:calim2-temp}) and the fact $k+\sum_i |\gamma_i| \leq 2m-1$ imply $\frac{1}{q_0} \geq \frac{1}{2m}$. Hence, for $\nu \in (0,m)$,
\begin{align*}
\tau=\frac{2m}{q_0}+ \nu \geq 1+ \nu \geq \frac{m+1}{m} \nu.
\end{align*}
Thus, the claim (\ref{lem:smooth:kappa}) is proved.
\end{proof}


\section{H\"older regularity for \texorpdfstring{$W^{m,2}$}{Wm,2} \texorpdfstring{$m$}{m}-harmonic almost complex structure in critical dimension \texorpdfstring{$n=2m$}{n=2m}}\label{sec:Holder}

In this section, we will establish the H\"older regularity of $W^{m,2}$ $m$-harmonic almost complex structure in critical dimension \texorpdfstring{$n=2m$}{n=2m} by employing the nonlinear structure of the corresponding equation. To be precise, we will prove the following theorem.
\begin{theorem}\label{thm:Holder}
Suppose $m \in \{2,3 \}$ and $J\in W^{m,2}(\mathcal{J}_{g})$ is a weakly $m$-harmonic almost complex structure on $(M^n,g)$ of $n=2m$. Then $J$ is H\"older-continuous.
\end{theorem}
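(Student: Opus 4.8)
The plan is to derive the H\"older regularity from the decay estimate of Lemma~\ref{lem:decay estimates} by a Morrey-type iteration, following the model provided by Lemma~\ref{lem:decay_biharmonic} but now exploiting the general ``good divergence form'' machinery. Since H\"older continuity is local and $M$ is compact, fix $x_0\in M$ and work in geodesic normal coordinates on a ball $B_{2r_0}(x_0)$, in which the metric $g$ and all its derivatives are uniformly close to the Euclidean ones; after a dilation we regard $J$ as a map $B_1\to M_n(\mathbb R)$ which is compatible with the (non-Euclidean) coordinate metric $g$, so that $\|J\|_{L^\infty}\le C(n)$ by Lemma~\ref{lem:L_infty_J}.

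The heart of the matter is to check that, for $m\in\{2,3\}$, the weak equation $[\Delta^m J,J]=0$ of Theorem~\ref{thm:EL-smooth} (equivalently, the distributional identities of Proposition~\ref{prop:EL-div}) can be rewritten, for an arbitrary constant matrix $\lambda_0$, in the form $\Delta^m J=\Psi=\Psi_H+\Psi_L$ satisfying Definition~\ref{def:good divergence form}: the highest-order part $\Psi_H$ is a linear combination of terms $\nabla^\alpha\big((J-\lambda_0)\ast h_{\alpha,\beta}\big)$ with $|h_{\alpha,\beta}|\le C\prod_i|\nabla^{\beta_i}J|$, $|\alpha|+\sum_i|\beta_i|=2m$ and --- crucially --- $1\le\sum_i|\beta_i|\le 2m-1$, while $\Psi_L$ gathers the terms carrying Christoffel symbols and curvature of $g$ and has $C^{2m}(\overline{B_1})$ coefficients. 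This is exactly Proposition~\ref{prop:T_lambda} for $m=2$ and its counterpart for $m=3$, whose proofs occupy the final section. I expect this to be the main obstacle: non-commutativity of matrix multiplication is precisely what confines the computation to $m\le 3$, and the delicate point is that the ``bad'' term $J\Delta^m J\,J$ must be coupled to $\Delta^m J$ through the commutator and the relations obtained by differentiating $J^2=-\mathrm{id}$ in such a way that at least one derivative always survives on the secondary factors, i.e. $\sum_i|\beta_i|\ge1$ (the tensor-valued analogue of writing $\Delta u=u|\nabla u|^2$ for $\mathbb{S}^{n-1}$-valued harmonic maps).

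Granting the good divergence form, the remaining steps are routine. Because $n=2m$, the critical Sobolev embedding $W^{m,2}\hookrightarrow W^{l,2m/l}$ shows that each term of the normalized energy $E(J,r)(x)=\sum_{l=1}^m\big(\int_{B_r(x)}|\nabla^l J|^{2m/l}\big)^{l/2m}$ is finite, monotone nondecreasing in $r$, and (by Dini's theorem, since it is continuous in $x$ and decreases to $0$) tends to $0$ as $r\to0$ uniformly for $x$ in a neighbourhood of $x_0$. Hence there is $r_0>0$ with $E(J,\rho)(x)\le\epsilon_0$ for all $x\in B_{r_0}(x_0)$ and all $\rho\le r_0$, where $\epsilon_0$ is the constant from Lemma~\ref{lem:decay estimates}. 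Rescaling $B_\rho(x)$ to the unit ball and applying that lemma at every such centre and scale --- noting that, $\Psi_L$ being genuinely of lower order, the rescaled datum $\Lambda_\rho$ gains a factor $\rho$, hence stays bounded and in fact tends to $0$ --- a standard iteration of the resulting inequality $D_{p_0}(J,\theta_0\rho)(x)\le\theta_0^{\tau}\big(D_{p_0}(J,\rho)(x)+\Lambda_\rho\big)$ with $p_0=\tfrac{4m}{3}$ yields, for some fixed $\tau'\in(0,1)$ and a uniform constant $C$,
\[
D_{p_0}(J,\rho)(x)=\Big(\rho^{\,p_0-n}\int_{B_\rho(x)}|\nabla J|^{p_0}\Big)^{1/p_0}\le C\rho^{\tau'},
\]
for all $x\in B_{r_0/2}(x_0)$ and all $\rho\le r_0$. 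This is the Morrey growth condition $\int_{B_\rho(x)}|\nabla J|^{p_0}\le C\rho^{\,n-p_0+p_0\tau'}$, so the Dirichlet growth theorem of Morrey gives $J\in C^{0,\tau'}(B_{r_0/4}(x_0))$. Since $x_0\in M$ was arbitrary and $M$ is compact, $J$ is H\"older continuous on $M$. Observe that the hypothesis $m\in\{2,3\}$ enters only through the good divergence form; every subsequent step is valid for all $m$, which is consistent with the belief expressed in the introduction that the divergence structure should persist in general.
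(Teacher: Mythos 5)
The overall plan — good divergence form from Proposition~\ref{prop:T_lambda}, decay estimate from Lemma~\ref{lem:decay estimates}, then a Morrey iteration — matches the paper, and your Euclidean discussion and final iteration are fine. However, your treatment of the curved metric has a genuine gap.

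You claim that the non-Euclidean contributions (``Christoffel symbols and curvature of $g$'') all go into the lower-order part $\Psi_L$ of Definition~\ref{def:good divergence form}. This is not correct. Lemma~\ref{lem:decay estimates} is built on the Green's function $G(x)=c_m\ln|x|$ for the \emph{Euclidean} operator $\Delta_0^m$ on $\R^{2m}$, while the Euler--Lagrange equation and Proposition~\ref{prop:T_lambda} involve the Laplace--Beltrami $\Delta$ and covariant $\nabla$ of $g$. Converting $\Delta^m J$ and the nonlinearity to $\Delta_0^m J_{\mathfrak m}$ plus ordinary derivatives produces, besides genuinely lower-order terms, top-order perturbations of the form
\begin{align*}
P_1=\big(g^{i_1j_1}\cdots g^{i_m j_m}-\delta^{i_1j_1}\cdots\delta^{i_m j_m}\big)\,D^{2m}_{i_1j_1\cdots i_m j_m}J_{\mathfrak m}
\end{align*}
and an analogous $P_2$ from the nonlinearity. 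These carry total derivative count $2m$, not $\le 2m-1$, and lack the $(J-\lambda_0)$ factor, so they fit \emph{neither} the $\Psi_L$ nor the $\Psi_H$ pattern of Definition~\ref{def:good divergence form}. They are also \emph{not} small in terms of $E(J,1)$: after rewriting as $D^{2m-1}\!\big(a(x)\,DJ_{\mathfrak m}\big)+\text{lower order}$ and applying Lemma~\ref{lem:green}, the only gain is the factor $\|a\|_{L^\infty}\le\delta_0$. This is exactly why the paper first scales $g$ to enforce $|g_{ij}-\delta_{ij}|+\sum_k|D^kg_{ij}|\le\delta_0$ for $\delta_0$ small, and then proves a \emph{separate} estimate $\|D\omega_{\mathcal P}\|_{L^{p_0}(B_1)}\le C\big(\delta_0\|DJ_{\mathfrak m}\|_{L^{p_0}(B_1)}+E(J_{\mathfrak m},1)\|Dg\|_{C^{2m-1}(B_1)}\big)$ for the perturbation, before concluding the decay. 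Your proposal mentions the dilation to normal coordinates but only uses it to bound the $C^{2m}$ norms of coefficients; it does not recognize that $P_1,P_2$ escape Definition~\ref{def:good divergence form} and that controlling them requires $\delta_0$ small together with this extra estimate. Without it, the step ``applying Lemma~\ref{lem:decay estimates}'' is not justified on a general $(M^{2m},g)$.
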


Since the H\"older regularity is a local property in nature, we may assume $(M^n,g)$ to be $(B_1,g)$ where $B_1$ is the unit ball of $\R^n$ centered at origin and $g$ is a smooth metric on $B_1$. To illustrate the main point of the argument, we firstly consider the case $g=g_0=\sum_i dx^i \otimes dx^i$, i.e., $(B_1,g_0)$ admits the Euclidean metric.
Then, we will deal with the general case as a small perturbation of the Euclidean case.

\subsection{The Euclidean case \texorpdfstring{$(B_1, g_0)$}{(B1,g0)}}
Since the metric is Euclidean, the covariant derivatives are just ordinary derivatives, and so we can interchange the order of derivatives. Moreover, an almost complex structure $J \in W^{m,2}(\mathcal{J}_{g})$ on $B_1$ can be regarded as a function in $W^{m,2}(B_1, M_n(\mathbb{R}))$ such that $J^2=-id$ and $J^t+J=0$, where $M_n(\mathbb{R})$ is the set of all real $n\times n$ matrices and $J^t$ is the transpose of matrix $J$. By the definition of the inner product of $A, B \in T^1_1(B_1)$, we know
\begin{align}
\langle A, B\rangle =\sum_{i,j=1}^n A^j_i B^j_i
\end{align}
for $A=A_i^j dx^i \otimes \frac{\partial}{\partial x^j}$ and
$B=B_i^j dx^i \otimes \frac{\partial}{\partial x^j}$. Thus, the inner product of $(1,1)$ tensor fields on $B_1$ can be viewed as the inner product of two vectors in Euclidean space $\R^{n^2}$. Finally, there holds
\begin{align}
|J|^2=\langle J, J\rangle= \sum_{i,j=1}^n \big( J^j_i \big)^2
=\mbox{Trace} (JJ^t)=\mbox{Trace} \big(J(-J)\big)=n
\end{align}
for all almost complex structure $J$ defined on $(B_1,g_0)$, i.e., $\| J \|_{L^\infty}=\sqrt{n}$.

In order to prove the regularity of weakly $m$-harmonic almost complex structures, we need to rewrite the corresponding Euler-Lagrange equation in a good divergence form. More precisely, we have the following lemma.
\begin{lemma}\label{lem:Nonliearity}
Suppose $m\in \{2,3 \}$ and $J$ is a $W^{m,2}$ weakly $m$-harmonic almost structure on $(B_1,g_0)$. Then $J$ satisfies the following equation in distributional sense,
\begin{align}\label{eqn:m-harmonic}
\Delta^m J = \Psi(J, \nabla J, \cdots, \nabla^{2m-1}J)
\end{align}
where $\Psi$ admits the property that for any fixed constant matrix $\lambda_0\in M_n(\R)$, $\Psi$ can be rewritten as a linear combination of the following terms
\begin{align*}
\nabla^{\alpha}\ast\bigg((J-\lambda_{0})\ast\nabla^{\beta}J\ast\nabla^{\gamma}J\bigg)
\quad or\quad
\lambda_0\ast\nabla^{\alpha}\ast\bigg((J-\lambda_{0})\ast\nabla^{\delta}J\bigg),
\end{align*}
where $\alpha,\beta,\gamma,\delta$ are multi-indices such that $1\leq|\alpha| \leq 2m-1$, $0\leq |\beta|,|\gamma|,|\delta|\leq m$, $|\alpha|+|\beta|+|\gamma|=2m$
and $|\alpha|+|\delta|=2m$.
\end{lemma}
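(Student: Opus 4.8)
The plan is to derive the asserted divergence structure purely algebraically, from the Euler--Lagrange equation together with the pointwise constraint $J^{2}=-\mathrm{id}$, using that on $(B_{1},g_{0})$ covariant derivatives are ordinary derivatives and commute freely. First I would record the weak Euler--Lagrange equation in Euclidean form (Theorem~\ref{thm:EL-smooth}), $[\Delta^{m}J,J]=0$, equivalently $\Delta^{m}J\cdot J=J\,\Delta^{m}J$. Next, applying $\Delta^{m}$ to $J^{2}=-\mathrm{id}$ and expanding by Leibniz' rule yields an identity
\[
\Delta^{m}J\cdot J+J\cdot\Delta^{m}J+Q=0,
\]
where $Q$ consists precisely of those Leibniz terms in which both copies of $J$ are differentiated, so $Q$ is a fixed integer-coefficient quadratic, schematically $Q=\sum_{a+b=2m,\ 1\leq a,b\leq 2m-1}C_{a,b}\,\nabla^{a}J*\nabla^{b}J$. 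Differentiating $J^{2}=-\mathrm{id}$ fewer times, and with arbitrary (not fully contracted) derivatives, gives the lower-order relations $\Delta J\cdot J+J\,\Delta J=-2\nabla_{k}J\,\nabla_{k}J$, $\nabla_{i}\nabla_{j}(J^{2})=0$, and so on; these will be the workhorses of the reduction.

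Combining these, I would multiply $\Delta^{m}J\cdot J+J\,\Delta^{m}J=-Q$ on the right by $J$, use $J^{2}=-\mathrm{id}$ and the Euler--Lagrange relation to cancel two terms, and solve for the principal part:
\[
\Delta^{m}J=\tfrac12\,Q\,J=\tfrac12\,J\,Q .
\]
Hence $\Delta^{m}J$ equals $J$ times an explicit quadratic of total derivative order $2m$ in $J$, which is already of the form \eqref{eqn:m-harmonic}. Writing $J=(J-\lambda_{0})+\lambda_{0}$ in the leading factor --- legitimate because $\lambda_{0}$ is constant, so $\nabla^{\ell}J=\nabla^{\ell}(J-\lambda_{0})$ for all $\ell\geq1$ --- splits the right-hand side into a part carrying a factor $J-\lambda_{0}$ and a part carrying the constant factor $\lambda_{0}$.

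It remains to rewrite each resulting summand into one of the two declared forms, and here the two available moves are: (i) pulling clusters of derivatives to the outside by the Leibniz rule, which manufactures the outer operator $\nabla^{\alpha}$; and (ii) re-inserting the constraint identities above to convert a product of pure derivatives of $J$ into one that again carries an undifferentiated factor $J$ (hence $J-\lambda_{0}$ after the splitting), and to push any interior factor of order $>m$ down to order $\leq m$, the excess order being absorbed against the equation itself (via $\Delta^{m}J\cdot J=-\tfrac12 Q$ and its contracted consequences), which closes the recursion. Iterating (i) and (ii), the goal is that every term become either $\nabla^{\alpha}\big((J-\lambda_{0})*\nabla^{\beta}J*\nabla^{\gamma}J\big)$ with $1\leq|\alpha|\leq2m-1$, $|\beta|,|\gamma|\leq m$, $|\alpha|+|\beta|+|\gamma|=2m$, or $\lambda_{0}*\nabla^{\alpha}\big((J-\lambda_{0})*\nabla^{\delta}J\big)$ with $1\leq|\alpha|\leq2m-1$, $|\delta|\leq m$, $|\alpha|+|\delta|=2m$.

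The main obstacle is exactly the bookkeeping in this last step: one must check that the rewriting terminates while \emph{simultaneously} meeting all index constraints --- a nonempty outer derivative, each interior factor of order at most $m$, the orders summing to $2m$, and precisely one undifferentiated factor --- and, since matrix multiplication is noncommutative, the order of the factors must be tracked at each stage. For $m=2$ the quadratic $Q$ has only a few terms and essentially the single identity $\Delta(J^{2})=0$ suffices; for $m=3$ the list is considerably longer and one must combine $\Delta^{j}(J^{2})=0$ for $j=1,2$ with their non-fully-contracted variants, making the computation long but still finite. This is precisely why the statement is established only for $m\in\{2,3\}$, with the same structure conjectured for all $m$.
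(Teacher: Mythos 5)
Your proposal correctly reproduces the paper's initial reduction (Theorem~\ref{thm:el:nonliearity}): applying $\Delta^m$ to $J^2=-\mathrm{id}$, multiplying by $J$, and invoking the Euler--Lagrange relation $[\Delta^m J,J]=0$ gives $\Delta^m J=\tfrac12 JQ=\tfrac12 QJ$ with $Q$ a fixed quadratic in $\nabla^{\le 2m-1}J$. The splitting $J=(J-\lambda_0)+\lambda_0$ is also the right first move, so the outline is sound up to that point.

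The gap is in the claimed ``recursion.'' Pulling derivatives out by Leibniz (your move (i)) and reinserting contracted derivatives of $J^2=-\mathrm{id}$ (your move (ii)) do not, as described, terminate in the declared divergence form. A concrete obstruction already appears at $m=2$: the quadratic $Q$ contains a term proportional to $\Delta J\,\Delta J$ (and also $\nabla^2_{pq}J\,\nabla^2_{pq}J$), so after the $\lambda_0$-splitting one faces $(J-\lambda_0)\,\Delta J\,\Delta J$. This is \emph{not} of the target form --- the constraints force $|\alpha|\ge 1$, but here $|\alpha|=0$ with both inner factors at the maximal order $m$. Manufacturing an outer derivative via Leibniz returns you to another $(J-\lambda_0)$ times two order-$m$ factors (or one order-$(m+1)$ factor, which needs further reduction), plus fully-differentiated cubics $\nabla J\,\nabla J\,\Delta J$ carrying \emph{no} undifferentiated factor; attempting to reintroduce a factor of $J-\lambda_0$ into those cubics by Leibniz brings $(J-\lambda_0)\Delta J\,\Delta J$ right back. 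The constraint $\Delta(J^2)=0$ rewrites only adjacent products $J\Delta J$ or $\Delta J\,J$; it does nothing to a product where $\Delta J$ is adjacent to $J-\lambda_0$ or to another $\Delta J$. Nor does substituting the equation $\Delta^m J\cdot J=-\tfrac12 Q$ help here: none of the offending factors actually has order $2m$, so the equation is never applicable inside the loop.

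The paper closes this gap with a genuinely different device: it works with the symmetric combination $T_m=JQ+QJ$ (equal to $2JQ$ under the EL relation, but term-by-term asymmetric), and proves in Proposition~\ref{prop:T_lambda} the \emph{unconditional} algebraic identity $T_m=T_{\lambda_0}-[\,J-\lambda_0,[\Delta^m J,J]\,]$, valid for every $J$ with $J^2=-\mathrm{id}$. The key step is to rewrite $J(\Delta J)^2+(\Delta J)^2 J$ via $\Delta(J^2)=0$ as a double commutator $[\Delta J,[\Delta J,J]]$ plus terms with an explicit $(\nabla J)^2$ factor, and to observe that this double commutator cancels exactly against a piece of $[\nabla J,[\nabla\Delta^{m-1}J,J]]$ coming from the highest-order block. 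What survives, besides $T_{\lambda_0}$, is the single term $-[\,J-\lambda_0,[\Delta^m J,J]\,]$, and only \emph{here} is the Euler--Lagrange equation invoked, to make it vanish. This commutator cancellation is the missing idea in your sketch; it is precisely what handles the $(J-\lambda_0)(\nabla^m J)^2$-type terms, and it is the reason the statement is established only case by case for $m\in\{2,3\}$ --- no generic ``pull-derivatives-and-iterate'' argument is known.
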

\begin{proof}
The Lemma is a direct consequence of Proposition \ref{prop:T_lambda}.
\end{proof}

It is obvious that (\ref{eqn:m-harmonic}) admits a good divergence form with $\Psi_L=0$ by Definition \ref{def:good divergence form}.

\medskip
\noindent \textbf{Proof of Theorem \ref{thm:Holder} in the Euclidean case $(B_1,g_0)$:}
\medskip

First, we use the normalized energy $E(J;x,r)$ defined by replacing $u$,$B_r$ by $J$, $B_r(x)$ respectively in (\ref{eqn:normlised energy}).
Since $n=2m$, it follows that for any $J \in W^{m,2}(B_1)$ and $B_r(x) \subset B_1$,
\begin{align*}
E(J;x,r)= \sum_{l=1}^{m}
\bigg(\int_{B_r(x)} |\nabla^l J|^{\frac{2m}{l}} \bigg)^{\frac{l}{2m}}
\end{align*}
is well defined due to Sobolev embedding theorem. For any fixed $R_0 \in (0,1)$, we have that for every $\epsilon_0>0$, there exists $r_0 \in (0, 1-R_0)$ such that
\begin{align}\label{thm:smooth:eqn:temp-1}
\sup_{x \in \overline{B}_{R_0}}E(J;x,r_0) < \epsilon_0.
\end{align}
It is easy to check that for any fixed point $x_0 \in \overline{B}_{R_0}$, $J_{x_0, r_0}(x):=J(x_0+r_0x)$ is also a $W^{m,2}$ weakly $m$-harmonic almost structure on $(B_1,g_0)$ with
\begin{align}
E(J_{x_0,r_0}; 0,1)=E(J; x_0,r_0)<\epsilon_0
\end{align}

By Lemma \ref{lem:Nonliearity}, the equation $J_{x_0,r_0}$ satisfies admits a good divergence form (see definiton \ref{def:good divergence form}) with $\Psi_L=0$.
Then it follows from Lemma \ref{lem:decay estimates} that by choosing suitable $\epsilon_0>0$ in (\ref{thm:smooth:eqn:temp-1}), there exists $\theta_0 \in (0,\frac{1}{2})$ such that
\begin{align}
D_{p_0}(J;x_0,\theta_0 r_0)=D_{p_0}(J_{x_0,r_0};0, \theta_0)
\leq \sqrt{\theta_0} D_{p_0}(J_{x_0,r_0};0,1)
=\sqrt{\theta_0} D_{p_0}(J;x_0,r_0)
\end{align}
where $p_0=\frac{4m}{3}$ and
\begin{align*}
D_{p_0}(J;x, r)=\bigg(r^{p_0-2m} \int_{B_r(x)} |\nabla J|^{p_0} \bigg)^\frac{1}{p_0}.
\end{align*}
Finally, by a standard iteration argument, there exists $\alpha \in (0,1)$ such that
\begin{align*}
D_{p_0}(J;x_0, r) \leq C r^\alpha, \quad \forall r \in (0,r_0),
\end{align*}
where $C$ is only dependent of $r_0$ and $\theta_0$. The Morrey's lemma implies that $J \in C^{0,\alpha}(\overline{B}_{R_0})$, hence that $J \in C^{0,\alpha}(B_1)$ which is the desired conclusion.
\qed

\subsection{The general case \texorpdfstring{$(B_1, g)$}{(B1,g)}}
In this subsection, we will prove the H\"older regularity of weakly $m$-harmonic almost structure on $(B_1,g)$ by a perturbation method.

\medskip
We start by recalling the scaling invariance of the functional $\mathcal{E}_m(J)$ in critical dimension $n=2m$. That is, if we rewrite the functional $\E_{m}$ in the following form to emphasize the metric $g$
\begin{align*}
\mathcal{E}_{m}(J, g)=\int_M \big| \Delta^{\frac{m}{2}}_{g} J\big|^2 dV_{g},
\end{align*}
and do the scaling $g_{\lambda}:=\lambda^2 g$ for some positive real number $\lambda$,
then
\begin{align*}
\mathcal{E}_m(J,g)= \mathcal{E}_{m}(J, g_{\lambda}).
\end{align*}
It follows that if $J$ is a weakly $m$-harmonic almost complex structure on $(M,g)$, then $J$ is also $m$-harmonic on $(M,g_{\lambda})$. Thus, doing the scaling does not affect the H\"older regularity of weakly $m$-harmonic almost complex structures in critical dimension. If we take the geodesic normal coordinates on the unit geodesic ball centered at fixed point in $(M,g_{\lambda})$, then the metric $g_{\lambda}$ in such local coordinates converges to the Euclidean metric in $C^{\infty}(B_1)$ as $\lambda$ goes to infinity. Hence, we can assume that, by a scaling if necessary, the metric $g$ on $B_1$ is sufficiently close to the Euclidean metric in the sense
\begin{align}\label{assumption:g}
|g_{ij}(x)-\delta_{ij}| + \sum_{k=1}^{2m} | D^k g_{ij}(x)| \leq \delta_0, \quad \forall x\in B_1
\end{align}
where $\delta_0$ is sufficiently small and will be determined later.

\medskip
\noindent \textbf{Proof of Theorem \ref{thm:Holder} in the general case $(B_1,g)$:}
\medskip

Firstly, we introduce an operator $\mathfrak{m}$ which maps a $(1,1)$ tensor field $A$ on $(B_1,g)$ to a $n\times n$ real matrix valued function,
\begin{align*}
A_{\m}:=\m(A)=(A_i^j)
\end{align*}
where $A=A_i^j dx^i \otimes \frac{\partial}{\partial x^j}$. In other words, $A$ denotes tensor field and $A_{\m}$ denotes its coefficient matrix.
Let us denote by $\nabla$ the covariant derivative on $(B_1,g)$ and $D$ the ordinary derivatives (i.e., $D_k=\partial_k$). Here it is necessary to emphasize the difference between the derivatives on tensor fields and matrix valued functions. For example, for $A=A_i^j dx^i \otimes \partial_j$, we have
\begin{align*}
(\nabla_{\partial_k} A )_i^j=D_k A_i^j + A_i^s \Gamma_{k s}^j-A_s^j \Gamma_{ki}^s
\end{align*}
where $\Gamma_{ij}^k$ denote the Christoffel symbols with respect to metric $g$. To simplify notation, we rewrite above equation as
\begin{align*}
\big( \nabla_{\partial_k} A \big)_{\m}=D_k A_{\m}+ Dg \ast A
\end{align*}
where $D_k A_{\m}=D_k (A_i^j)=(D_k A_i^j)$. Similarly, there holds
\begin{align}\label{eqn:holder:laplace}
\big( \Delta A \big)_{\m}= \Delta A_{\m} + Dg \ast D A_{\m}
+ \big( D^2g + Dg \ast Dg) \ast A_{\m}.
\end{align}

Now let us recall the $m$-harmonic almost complex structure equation (\ref{eqn:EL:m-poly}), i.e.,
\begin{align*}
\Delta^m J = T(J, \nabla J, \cdots, \nabla^{2m-1}J).
\end{align*}
We will reduce above equation to a perturbation form of the Euclidean case step by step. For the convenience of reader, we will give the detailed process of dealing with the term $\Delta^m J$ as a example. Repeated application of (\ref{eqn:holder:laplace}) yields
\begin{align*}
(\Delta^m J )_{\m}= \Delta^m J_{\m} + L_1(D^ig, D^j J_{\m})
\end{align*}
where $L_1$ stands for the lower order terms in the following form
\begin{align*}
L_1=\sum D^{i_1}g \ast \cdots \ast D^{i_s} g\ast D^j J_{\m}
\end{align*}
with $i_\mu \geq 1, \mu=1, \cdots, s$, $j \geq 0$ and $j+\sum_{\mu=1}^s i_{\mu}=2m$. Let us denote by $\Delta_0= \sum_{i=1}^{2m} \partial^2_i$ the standard Laplace operator on Euclidean space $\R^{2m}$. Notice that for any smooth function $f$, we know
\begin{align*}
\Delta f =g^{ij} \frac{\partial^2 f}{\partial x^i \partial x^j}- g^{ij}\Gamma^s_{ij}\frac{\partial f}{\partial x^s}
=:g^{ij} \frac{\partial^2 f}{\partial x^i \partial x^j} + Dg \ast Df,
\end{align*}
where we omit the terms $g^{ij}$ in the expression $Dg\ast Df$ due to boundedness of $g$. Then we have
\begin{align*}
\big( \Delta^m -\Delta_0^m \big)J_{\m} = P_1 + L_2 (D^i g, D^j J_{\m}),
\end{align*}
where $P_1$ stands for the perturbation term in the following form
\begin{align}\label{eqn:Holder:p1}
P_1 = \big( g^{i_1 j_1}\cdots g^{i_m j_m} - \delta^{i_1 j_1}\cdots \delta^{i_m j_m} \big) D^{2m}_{i_1 j_1 \cdots i_m j_m} J_{\m}
=:\sum_{|\beta|=2m} a_{\beta}(x) \ast D^{\beta} J_{\m},
\end{align}
and $L_2$ also stands for the lower order terms and has the similar expression as $L_1$. Hence, we obtain
\begin{align}
(\Delta^m J )_{\m}= \Delta^m_0 J_{\m} + P_1+ \widetilde{L}_1(D^ig, D^j J_{\m}),
\end{align}
where $\widetilde{L}_1=L_1+L_2$ has the following form
\begin{align}
\widetilde{L}_1=\sum D^{i_1}g \ast \cdots \ast D^{i_s} g\ast D^j J_{\m}
\end{align}
with $i_\mu \geq 1, \mu=1, \cdots, s$, $j \geq 0$ and $(\sum_{\mu=1}^s i_{\mu}) +j=2m$.

Similar arguments apply to the nonlinear term $T$ in (\ref{eqn:EL:m-poly}) and yield
\begin{align}
T(J, \nabla J, \cdots, \nabla^{2m-1} J)= T_s(J_{\m}, D J_{\m}, \cdots, D^{2m-1} J_{\m}) + P_2 + \widetilde{L}_2
\end{align}
where $T_s$ admits a good divergence form as $\Psi$ in Lemma \ref{lem:Nonliearity}, $P_2$ stands for the perturbation terms
\begin{align}\label{eqn:holder:p2}
P_2= \sum b_{ijk} \ast D^i \big( (J_{\m}-\lambda_0) \ast D^j J_{\m} \ast D^k J_{\m} \big)
\end{align}
where $b_{ijk}$ consists of $|g^{\widetilde{i}\widetilde{j}}-\delta^{\widetilde{i}\widetilde{j}}|$, $0\leq j,k \leq m$ and $i+j+k=2m$, and $\widetilde{L}_2$ stands for the lower order terms in the following form
\begin{align}
\widetilde{L}_2=\sum D^{i_1}g \ast \cdots \ast D^{i_s}g
\ast D^j J_{\m} \ast D^k J_{\m} \ast D^l J_{\m}
\end{align}
with $i_{\mu} \geq 1$ $\mu=1, \cdots, s$, $0\leq j+k+l\leq 2m-1$ and $\big(\sum_{\mu=1}^s i_{\mu} \big)+ j+k+l=2m$.

By above arguments, we get the final reduced equation about $J_{\m}$
\begin{align}\label{eqn:holder:temp1}
\Delta_0^m J_{\m}= T_s + \mathcal{P} + \mathcal{L}
\end{align}
where $\mathcal{P}=P_2 - P_1$ and $\mathcal{L}=\widetilde{L}_2 - \widetilde{L}_1$. In other words, the nonlinear part of (\ref{eqn:holder:temp1}) consists of three types of terms: terms that admit a good divergence form, the perturbation terms and the lower order terms.

Now let us recall the definition of $E(u,r)$ and $D_p(u,r)$ in (\ref{eqn:normlised energy}) and (\ref{eqn:Dp}) respectively. Then we claim that for any given $\tau \in (0,1)$, there exists $\delta_0>0$, $\epsilon_0>0$
and $\theta_0 \in (0,\frac{1}{2})$ such that if the metric $g$ satisfies (\ref{assumption:g}) and $E(J_{\m},1)<\epsilon_0$, then we have
\begin{align}\label{ineqy:holder:decay}
D_{p_0}(J_{\m}, \theta_0) \leq \theta_0^\tau \big(D_{p_0}(J_{\m}, 1)+ \|Dg\|_{C^{2m-1}(B_1)} \big).
\end{align}
where $p_0=\frac{4m}{3}$.
It is obvious that above claim is a direct consequence of Lemma \ref{lem:decay estimates} provided $\mathcal{P} \equiv 0$. Since the key point in the proof of Lemma \ref{lem:decay estimates} is that the inequality (\ref{lem:decay:main}) holds, it suffices to prove a similar inequality for additional nonlinear terms $\mathcal{P}$.

We claim that, if we denote $G(x)$ to be the fundamental solution for $\Delta^m_0$ on $\R^{2m}$ and do the similar arguments as that in the proof of Lemma \ref{lem:decay estimates}, with $u$ replaced by $J_{\m}$, then we have
\begin{align}\label{ineqy:holder:p}
\| D \omega_{\mathcal{P}} \|_{L^{p_0}(B_1)}
\leq C \big( \delta_0 \| D J_{\m} \|_{L^{p_0}(B_1)}
+ E(J_{\m},1) \|Dg \|_{C^{2m-1}(B_1)} \big),
\end{align}
where $C$ is a positive constant independent of $J_{\m}$ and
\begin{align*}
\omega_{\mathcal{P}}(x):= \int_{\R^{2m}} G(x-y) \mathcal{P}(\widetilde{J}_{\m})(y) dy.
\end{align*}
We now turn to proving (\ref{ineqy:holder:p}). Let us first deal with the perturbation terms with the form $a(x) D^{2m}J_{\m}$ (see (\ref{eqn:Holder:p1}))  in $\mathcal{P}$ where $|a(x)|\leq C \delta_0$.
Since
\begin{align*}
a(x) D^{2m} J_{\m}= D^{2m-1} \big( a(x) DJ_{\m} \big) + \mbox{Lower order terms}
\end{align*}
and
\begin{align*}
& \bigg \| D\int_{\R^{2m}} D^{2m-1}G(x-y) \widetilde{a}(y) D \widetilde{J}_{\m}(y) dy
\bigg \|_{L^{p_0}(\R^{2m})} \\
=& \bigg \| \int_{\R^{2m}} D^{2m}G(x-y) \widetilde{a}(y) D \widetilde{J}_{\m}(y) dy
\bigg \|_{L^{p_0}(\R^{2m})} \\
\leq & C \| \widetilde{a}(x) D \widetilde{J}_{\m}(x) \|_{L^{p_0}(\R^{2m})} \\
\leq & C \|a(x)\|_{L^\infty(B_1)} \|D J_{\m}(x) \|_{L^{p_0}(B_1)} \\
\leq & C \,\delta_0 \, \|D J_{\m}(x) \|_{L^{p_0}(B_1)},
\end{align*}
it follows from estimates for lower order terms in Lemma \ref{lem:decay estimates} that (\ref{ineqy:holder:p}) holds for the terms in the form $a(x) D^{2m} J_{\m}$. In the same manner,  (\ref{ineqy:holder:p}) also holds for the terms in (\ref{eqn:holder:p2}). Hence, our claim holds.

Finally, following the similar argument in the proof of Theorem \ref{thm:Holder} in the Euclidean case $(B_1,g_0)$, the decay estimate (\ref{ineqy:holder:decay}) implies $J_{\m} \in C^{0,\alpha}(B_1)$ for some $\alpha \in (0,1)$. The proof is complete.
\qed

\section{Higher regularity for \texorpdfstring{$m$}{m}-harmonic almost complex structures}\label{sec:smooth}
In this section, we will apply Theorem \ref{thm:higher regularity} to prove the following theorem.
\begin{theorem}\label{thm:J:smooth}
Suppose $n\geq 2m$ $(m \geq 1)$ and $J \in C^{0,\alpha} \cap W^{m,2}$ is a weakly $m$-harmonic almost complex structure on $(M^n,g)$. Then $J$ is smooth.
\end{theorem}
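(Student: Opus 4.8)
The plan is to localize and reduce the problem to an application of Theorem~\ref{thm:higher regularity}. Since smoothness is local, I fix a point of $M$ and work in geodesic normal coordinates on a small ball $B_1$, in which $J$ is represented by a matrix-valued function $J_{\m}\in W^{m,2}(B_1,M_n(\R))\cap C^{0,\alpha}$ with $\|J_{\m}\|_{L^\infty}\le\sqrt n$ (Lemma~\ref{lem:L_infty_J}), and in which the metric coefficients, their inverse, the Christoffel symbols and all their derivatives up to order $2m$ are smooth (and, after a scaling as in Section~\ref{sec:Holder}, as close to the Euclidean data as we wish — here we need only smoothness). The case $m=1$ is the classical second-order semilinear case and is immediate, so I assume $m\ge2$.

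First I would put the Euler--Lagrange equation in divergence form. By Proposition~\ref{prop:EL-div}, $J$ satisfies $\Delta^m J=\sum_{s=0}^{m-1}(-1)^{m+1+s}\nabla^s\cdot g_s$ with $g_s$ a linear combination of triple products $\nabla^{k_1}J\,\nabla^{k_2}J\,\nabla^{k_3}J$ with $k_1+k_2+k_3=2m-s$ and $0\le k_i\le m$. Since $2m-s\ge m+1$, at least two of the $k_i$ are positive, so after pulling out the bounded factor $|J_{\m}|\le\sqrt n$ from any zero index and applying Young's inequality with exponents $\tfrac{2m-s}{k_i}$ (whose reciprocals sum to $1$) one gets the pointwise bound $|g_s|\le C\sum_{l=1}^{m}|\nabla^l J_{\m}|^{(2m-s)/l}$, which is exactly the structure (\ref{thm:HR:eqn:H}) of the highest-order term $H$. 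Converting covariant derivatives to ordinary ones, exactly as in the general case of Section~\ref{sec:Holder}, introduces only extra terms that are products of smooth functions (built from $g$, $g^{-1}$, the Christoffel symbols and their derivatives) with factors $D^jJ_{\m}$ whose total order in $J_{\m}$ is at most $2m-1$; organizing the outer divergence to have order $\le m-1$, these fit the lower-order structure (\ref{thm:HR:eqn:L}) with inner order $\le 2m-1-k$. The one delicate point is that the reduction of $(\Delta^m J)_{\m}$ to the Euclidean $\Delta_0^m J_{\m}$ also produces genuine order-$2m$ perturbation terms $a_\beta(x)D^\beta J_{\m}$ ($|\beta|=2m$) whose smooth coefficients vanish when $g=\delta$; equivalently, one keeps the elliptic operator $\Delta_g^m$ on the left and notes that the proof of Theorem~\ref{thm:higher regularity} uses only interior elliptic estimates for the underlying operator and its Green's function — estimates that remain valid, with the same singularity structure, for any smooth-coefficient uniformly elliptic operator of order $2m$.

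With the equation written as $\Delta_g^m J_{\m}=H+L$ (or $\Delta_0^m J_{\m}=H+L+P$ with $P$ the perturbation terms above), $H$ satisfying (\ref{thm:HR:eqn:H}) and $L$ satisfying (\ref{thm:HR:eqn:L}), Theorem~\ref{thm:higher regularity} yields $J_{\m}\in C^\infty(B_1)$, hence $J$ is smooth. Concretely one re-runs its two Claims: the Morrey estimate $\sup_{B_\rho(x)\subset B_R}\rho^{2m-n-2\alpha}\int_{B_\rho(x)}|\nabla^m J_{\m}|^2<\infty$, obtained by testing with $\eta^{2m}(J_{\m}-\overline{J_{\m}})$, and then the iteration $C^{[\nu],\sigma}\Rightarrow$ improved Morrey exponent $\Rightarrow C^{[\nu'],\sigma'}$, after which ordinary Schauder bootstrapping finishes. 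I expect the main obstacle to be exactly the metric: controlling the top-order perturbation terms $a_\beta(x)D^{2m}J_{\m}$ inside the energy estimate of Claim~(1), where they must be absorbed into the left-hand term $\int\eta^{2m}|\nabla^m J_{\m}|^2$. This is handled either by using the $C^{2m}$-smallness of $g-\delta$ on the small coordinate ball — mirroring the treatment of the perturbation term $\mathcal{P}$ via the potential estimate (\ref{ineqy:holder:p}) in Section~\ref{sec:Holder} — or, more cleanly, by carrying the operator $\Delta_g^m$ unchanged through the whole argument. Everything else is the bookkeeping of Proposition~\ref{prop:EL-div}, Young's inequality, and the verbatim arguments of Theorem~\ref{thm:higher regularity}.
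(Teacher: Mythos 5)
Your proposal is correct and follows essentially the same route as the paper's proof of Theorem~\ref{thm:J:smooth}: localize, invoke Proposition~\ref{prop:EL-div} to put the Euler--Lagrange equation in divergence form, verify the nonlinearities fit the structural hypotheses~(\ref{thm:HR:eqn:H})--(\ref{thm:HR:eqn:L}) of Theorem~\ref{thm:higher regularity}, and apply that theorem with the non-Euclidean metric handled as in Section~\ref{sec:Holder}. You are somewhat more explicit than the paper about the Young-inequality bound on $g_s$ and about the top-order perturbation terms $a_\beta(x)D^{2m}J_{\m}$ arising from $\Delta_g^m-\Delta_0^m$ (the paper simply observes that all tools used in Theorem~\ref{thm:higher regularity} have Riemannian counterparts, which is your option of carrying $\Delta_g^m$ unchanged, and leaves the details to the reader), but the argument is in substance identical.
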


\begin{proof}
Since the higher regularity is a local property in nature, we assume that $(M,g)$ is a unit ball $B_1$ in $\R^n$ equipped with a smooth metric $g \in C^\infty(\overline{B}_1)$.

Firstly, it follows from Proposition \ref{prop:EL-div} that any $m$-harmonic almost complex structure $J$ satisfies the following equation in distributional sense
\begin{align}\label{eqn:smooth:el}
\Delta^m J=\sum_{s=0}^{m-1}(-1)^{m+1+s} \nabla^s \cdot g_s,
\end{align}
where
\begin{align*}
g_s =\sum_{\substack {k_1+k_2+k_3=2m-s \\ 0 \leq k_1, k_2, k_3 \leq m}}
C(k_1, k_2, k_3)\,
\nabla^{k_1} J \nabla^{k_2} J \nabla^{k_3} J,
\quad C(k_1,k_2,k_3)\in \mathbb{Z}.
\end{align*}

Next, by the same argument as in the proof of Theorem \ref{thm:Holder} in the general case $(B_1,g)$, we can reduce (\ref{eqn:smooth:el}) to the following equation
\begin{align}\label{eqn:smooth:reduce}
\Delta^m J_{\m}= \sum_{s=0}^{m-1}(-1)^{m+1+s} \nabla^s \cdot \widetilde{g}_s + \mathcal{L},
\end{align}
where $\widetilde{g}_s$ is the main body of nonlinearity in the following form
\begin{align*}
\widetilde{g}_s = \sum C_{k_1k_2k_3} \nabla^{k_1} J_{\m}\, \nabla^{k_2} J_{\m} \,\nabla^{k_3} J_{\m},
\end{align*}
where $C_{k_1k_2k_3}$ are constants and $k_1+k_2+k_3=2m-s$, and $\mathcal{L}$ contains the lower order terms in the following form
\begin{align*}
\mathcal{L}
=\sum D^{i_1}g \ast \cdots \ast D^{i_s} g\ast
\big( D^j J_{\m} + D^{k_1} J_{\m} \ast D^{k_2}  J_{\m} \ast D^{k_3} J_{\m} \big)
\end{align*}
with $i_\mu \geq 1, \mu=1, \cdots, s$, $j=k_1+k_2+k_3$ and $(\sum_{\mu=1}^s i_{\mu}) +j=2m$.

Obviously, $J_{\m} \in C^\infty(B_1)$ follows from Theorem \ref{thm:higher regularity} provided metric $g$ is Euclidean. Note that all tools we employ in proving Theorem \ref{thm:higher regularity} are Sobolev inequalities, Poincar\'e inequalities, interpolation inequalities, Sobolev extension Theorems, Green functions and a priori estimates for elliptic equations on Euclidean spaces. Since there are corresponding versions of above-mentioned tools on Riemannian manifold $(B_1,g)$ with metric $g\in C^\infty(\overline{B}_1)$, we can obtain $J_{\m} \in C^\infty(B_1)$ for the general case $g\in C^\infty(\overline{B}_1)$ by following the same arguments as in the proof of Theorem \ref{thm:higher regularity}. The details are left to the reader.
\end{proof}


\section{Appendix}\label{sec:appendix}
In this section, we will rewrite $m$-harmonic almost complex structure equation in a good divergence form.

\medskip
We first derive an equivalent form of $m$-harmonic almost complex structure equation.
\begin{theorem}\label{thm:el:nonliearity}
The Euler-Lagrange equation with respect to the functional $\mathcal{E}_{m}(J)$
\begin{align*}
[\Delta^{m}J,J]=0
\end{align*}
is equivalent to
\begin{align*}
\Delta^{m}J=\frac{1}{2}JQ_m=\frac{1}{2}Q_m J
\end{align*}
where
\begin{align*}
Q_m = \Delta^m(J^2)-\Delta^m J \,J- J\, \Delta^m J.
\end{align*}
Hence $m$-harmonic almost complex structures satisfies the following equation
\begin{align}\label{eqn:EL:m-poly}
4 \Delta^m J= T_m(J,\nabla J, \cdots, \nabla^{2m-1}J)
\end{align}
where $T_m=JQ_m+Q_mJ$.
\end{theorem}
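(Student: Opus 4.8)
The plan is to treat this as a purely algebraic identity in the ring of endomorphisms of $TM$ (equivalently, in $T^1_1(M)$ with composition as product), using only associativity of composition together with the pointwise constraint $J^2=-\mathrm{id}$; compatibility with $g$ plays no role here. First I would record that, since $J^2=-\mathrm{id}$ is the constant tensor $-\mathrm{id}$, one has $\Delta^m(J^2)=\Delta^m(-\mathrm{id})=0$, so that
\[
Q_m=\Delta^m(J^2)-\Delta^m J\,J-J\,\Delta^m J=-\bigl(\Delta^m J\,J+J\,\Delta^m J\bigr).
\]
Next I would compute $JQ_m$ and $Q_mJ$ directly from this expression. Composing on the left by $J$ and using $J^2=-\mathrm{id}$ on the adjacent pair gives $JQ_m=\Delta^m J-J\,\Delta^m J\,J$, and composing on the right by $J$ gives $Q_mJ=\Delta^m J-J\,\Delta^m J\,J$ as well. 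Hence the identity $\tfrac12 JQ_m=\tfrac12 Q_mJ$ holds for \emph{every} $J\in\mathcal J_g$, with no use of the Euler--Lagrange equation; the genuine content of the theorem is the first equality $\Delta^m J=\tfrac12 JQ_m$.

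For the equivalence I would then argue as follows. By the previous step, $\Delta^m J=\tfrac12 JQ_m$ is the same as $2\Delta^m J=\Delta^m J-J\,\Delta^m J\,J$, i.e. $\Delta^m J=-J\,\Delta^m J\,J$. Composing this last equation on the left by $J$ and using $J^2=-\mathrm{id}$ converts it into $J\,\Delta^m J=\Delta^m J\,J$, which is exactly $[\Delta^m J,J]=0$; since left composition by $J$ is invertible (compose by $-J$), every step is reversible, so the two assertions are equivalent. Finally, adding $2\cdot\tfrac12 JQ_m$ and $2\cdot\tfrac12 Q_mJ$ yields $4\Delta^m J=JQ_m+Q_mJ=T_m$, which is \eqref{eqn:EL:m-poly}.

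There is no serious obstacle: the proof is essentially a two-line computation. The only points requiring care are (i) keeping the order of factors straight, since composition of $(1,1)$-tensors is noncommutative, so that $J^2=-\mathrm{id}$ is invoked only on genuinely adjacent occurrences of $J$; and (ii) recognizing why the formulation through $\Delta^m(J^2)$ — rather than simply writing $Q_m=-(\Delta^m J\,J+J\,\Delta^m J)$ — is the useful one downstream: expanding $\Delta^m(J^2)$ by the Leibniz rule cancels the two extreme terms $\Delta^m J\,J$ and $J\,\Delta^m J$ against the subtracted ones, leaving $Q_m$, and hence $T_m=JQ_m+Q_mJ$, as a sum of terms $\nabla^{k_1}J\,\nabla^{k_2}J$ with $1\le k_1,k_2$ and $k_1+k_2=2m$. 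Thus $Q_m$ involves at most $2m-1$ derivatives on each factor, so \eqref{eqn:EL:m-poly} is genuinely semilinear; the precise bookkeeping of these lower-order terms, and their rewriting in good divergence form, is what I would defer to Proposition~\ref{prop:T_lambda}.
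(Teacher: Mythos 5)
Your proof is correct and follows essentially the same route as the paper: both observe that $J^2=-\mathrm{id}$ gives $\Delta^m(J^2)=0$, hence $Q_m=-(\Delta^m J\,J+J\,\Delta^m J)$, and both reduce to the algebraic identity $JQ_m=Q_mJ=\Delta^m J-J\,\Delta^m J\,J$, from which the equivalence with $[\Delta^m J,J]=0$ and the formula $4\Delta^m J=T_m$ follow by composing with $J$. The paper only writes out the computation for $m=2$ and declares the equivalence "clear," whereas you carry it through for general $m$ and spell out the reversibility by invertibility of left multiplication by $J$; that is a mild improvement in explicitness but not a different argument.
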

\begin{proof}
Since the proof is just making simple derivative computations, we give it only for the case $m=2$.

For every almost complex structure $J$, we have $J^2=-id$ and $\Delta (id)=0$. Then we compute
\begin{align*}
0=\Delta^2 (J^2)
&=\Delta(\Delta J J +2 \nabla J \nabla J + J \Delta J) \\
&=\Delta^2 J J + 2 \nabla \Delta J \nabla J  + 2\nabla J \nabla \Delta J+ 2\Delta J \Delta J
+ 2 \Delta( \nabla J )^2
+ J \Delta^2 J.
\end{align*}
Let
\begin{align*}
Q_2
:=2 \nabla \Delta J \nabla J  + 2\nabla J \nabla \Delta J+ 2\Delta J \Delta J
+ 2 \Delta( \nabla J )^2,
\end{align*}
and  we obtain
\begin{align*}
Q_2=\Delta^2 (J^2)-\Delta^2 J J - J \Delta^2 J=-\Delta^2 J J - J \Delta^2 J,
\end{align*}
which implies that
\begin{align*}
JQ_2=Q_2J=\Delta^2 J - J \Delta^2 J J.
\end{align*}
Clearly, $[\Delta^2 J, J]=0$ is equivalent to
\begin{align*}
JQ_2=Q_2J=\Delta^2 J - J \Delta^2 J J=2\Delta^2 J.
\end{align*}
which completes the proof.

\end{proof}

Now let us state our main conclusion in this section.
\begin{proposition}\label{prop:T_lambda}
Suppose $m\in \{1,2,3 \}$ and suppose $B_1 \subset \R^n$ is a unit Euclidean ball and $g$ is a smooth metric tensor on $B_1$. If $J$ is a smooth square matrix valued function and satisfies $J^2=-id$, then for every fixed constant matrix $\lambda_0$, $T_m$ defined in Theorem \ref{thm:el:nonliearity} can be rewritten in the following form
\begin{align}
T_m=T_{\lambda_0} - \big[J-\lambda_0, [\Delta^m J, J]\big]
\end{align}
where $T_{\lambda_0}$ is a linear combination of the following terms
\begin{align*}
\nabla^{\alpha}\bigg((J-\lambda_{0})\ast\nabla^{\beta}J\ast\nabla^{\gamma}J\bigg)
\quad or\quad
\lambda_0\ast\nabla^{\alpha}\bigg((J-\lambda_{0})\ast\nabla^{\delta}J\bigg),
\end{align*}
where $\alpha,\beta,\gamma,\delta$ are multi-indices such that $1\leq|\alpha| \leq 2m-1$, $0\leq |\beta|,|\gamma|,|\delta|\leq m$, $|\alpha|+|\beta|+|\gamma|=2m$
and $|\alpha|+|\delta|=2m$.

\end{proposition}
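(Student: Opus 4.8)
The plan is to start from the identity in Theorem~\ref{thm:el:nonliearity}, namely $T_m = JQ_m + Q_m J$ with $Q_m = \Delta^m(J^2) - \Delta^m J\,J - J\,\Delta^m J$, and to insert the constant matrix $\lambda_0$ in a systematic way so that every term acquires at least one factor of $J-\lambda_0$ in a position where derivatives can be moved onto it. First I would expand $\Delta^m(J^2)$ by the Leibniz rule: since $J^2 = -\mathrm{id}$ is constant, $\Delta^m(J^2)=0$, so $Q_m = -(\Delta^m J\,J + J\,\Delta^m J)$, but the point of the proposition is to keep $\Delta^m(J^2)$ written out as a sum of terms $\nabla^{k_1}J\,\nabla^{k_2}J$ with $k_1+k_2=2m$ and to use the vanishing only at the very end. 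Concretely, $\Delta^m(J^2) = \sum_{k_1+k_2=2m} c_{k_1 k_2}\,\nabla^{k_1}J\,\nabla^{k_2}J$ for integer coefficients $c_{k_1k_2}$; the two extreme terms $k_1=2m$ and $k_2=2m$ are exactly $\Delta^m J\,J$ and $J\,\Delta^m J$, so $Q_m$ is a sum over $1\le k_1,k_2\le 2m-1$ of products $\nabla^{k_1}J\,\nabla^{k_2}J$.

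Next I would rewrite each such product in "good divergence form." In each term $\nabla^{k_1}J\,\nabla^{k_2}J$ with $1\le k_1 \le 2m-1$, choose the factor carrying the larger number of derivatives — say $\nabla^{k_1}J$ with $k_1\ge m$ (if both are $\le m-1$ there is nothing to do, the term is already low order) — split $k_1 = |\alpha| + k_1'$ with $|\alpha|\ge 1$ and $k_1'\le m$, and integrate by parts to write $\nabla^{k_1}J\,\nabla^{k_2}J = \nabla^\alpha(\nabla^{k_1'}J\,\nabla^{k_2}J) + (\text{terms with fewer derivatives on the first factor})$, exactly as in Remark~\ref{rem:indices}(3). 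Iterating, and then replacing the innermost bare $J$ by $(J-\lambda_0)+\lambda_0$, produces terms of the shape $\nabla^\alpha\big((J-\lambda_0)*\nabla^\beta J*\nabla^\gamma J\big)$ plus terms where a factor of $\lambda_0$ has been pulled out, which become $\lambda_0 * \nabla^\alpha\big((J-\lambda_0)*\nabla^\delta J\big)$ after one more $(J-\lambda_0)+\lambda_0$ substitution and one more integration by parts. Finally I multiply on the left and right by $J$ (to form $JQ_m+Q_mJ$), again writing $J=(J-\lambda_0)+\lambda_0$; the $\lambda_0$ part keeps a term of the listed type (possibly producing the second family), while the $(J-\lambda_0)$ part, when it multiplies the two extreme terms $\Delta^m J\,J$ and $J\,\Delta^m J$ that were \emph{not} of good-divergence form, is precisely what assembles into the commutator correction $-\big[J-\lambda_0,[\Delta^m J,J]\big]$. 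Tracking the index bookkeeping gives $|\alpha|+|\beta|+|\gamma|=2m$, $|\alpha|+|\delta|=2m$, $|\beta|,|\gamma|,|\delta|\le m$, $|\alpha|\ge 1$, and $|\alpha|\le 2m-1$ follows since $|\beta|+|\gamma|\ge 1$ (the product cannot have all derivatives on a single factor after the splitting, because $\Delta^m(J^2)$ contributes no $\nabla^{2m}J\cdot J$ term once the extremes are removed).

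The main obstacle I expect is the noncommutativity of matrix multiplication combined with the need to verify the \emph{exact} commutator remainder $-[J-\lambda_0,[\Delta^m J,J]]$ rather than just "some lower-order junk." One has to be careful that when $\lambda_0$ is reinserted in the two leading terms, the pieces $\lambda_0\,\Delta^m J\,J$, $J\,\Delta^m J\,\lambda_0$, etc., recombine correctly: the identity $JQ_m=Q_mJ=\Delta^m J - J\Delta^m J J$ from Theorem~\ref{thm:el:nonliearity} shows $T_m = 2(\Delta^m J - J\Delta^m J J)$, and one checks directly that $2(\Delta^m J - J\Delta^m J J) + [J-\lambda_0,[\Delta^m J,J]]$ equals a sum of good-divergence and $\lambda_0$-type terms by expanding the double commutator, using $J^2=-\mathrm{id}$, and grouping. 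Because the claim is only made for $m\in\{1,2,3\}$, it suffices to carry out this expansion explicitly in each of the three cases (as the proof of Theorem~\ref{thm:el:nonliearity} already does for $m=2$); the combinatorics of $\Delta^m(J^2)$ is completely mechanical for these small $m$, so the argument reduces to a finite, if tedious, verification once the structural recipe above is in place.
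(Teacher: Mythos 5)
Your proposal aims at the same end goal as the paper (a finite, case-by-case verification for $m\in\{1,2,3\}$), and your algebraic observation that $T_m = 2(\Delta^m J - J\,\Delta^m J\,J)$ whenever $J^2=-id$ is correct and is indeed established in the proof of Theorem~\ref{thm:el:nonliearity}. However, the "structural recipe" you sketch is not the mechanism the paper's proof actually uses, and I do not think it would survive being carried out. The paper's mechanism for each $m$ is to recognize the leading part of $T_m$ as a \emph{double commutator} (for $m=1$, the whole of $T_1$ equals $[\nabla J,[\nabla J, J]]$, a fact that crucially uses $\nabla J\,J+J\,\nabla J=0$), and then to apply the product rule $\nabla\big[J-\lambda_0, K\big]=[\nabla J, K]+[J-\lambda_0,\nabla K]$ with $K$ chosen so that $\nabla K$ produces $[\Delta^m J, J]$. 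This is precisely how the correction term $-[J-\lambda_0,[\Delta^m J, J]]$ arises, and how the remaining divergence-type expression $T_{\lambda_0}$ is constructed. For $m=2,3$ the same idea is run through a much longer bookkeeping of pieces $\mathbf{I},\mathbf{II},\dots$, repeatedly using the Leibniz consequences of $J^2=-id$ such as $\Delta J\,J+2(\nabla J)^2+J\,\Delta J=0$.

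Your proposed order of operations — first integrate by parts inside $Q_m=\sum_{1\le k_1,k_2\le 2m-1}c_{k_1k_2}\nabla^{k_1}J\,\nabla^{k_2}J$ to reach a divergence form, then multiply by $J$ and split $J=(J-\lambda_0)+\lambda_0$ — has several concrete problems. First, $Q_m$ contains no undifferentiated factor $J$ (both $k_1,k_2\ge 1$), so there is no "innermost bare $J$" to replace by $(J-\lambda_0)+\lambda_0$ at that stage. Second, once you have a term $\nabla^\alpha(\cdot)$ and only then multiply by $J$ on the outside, the result $J\,\nabla^\alpha(\cdot)$ is no longer in divergence form; undoing that reintroduces exactly the kind of terms you were trying to remove. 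Third, the "integrate by parts all the way" variant brings back the extreme terms $J\,\Delta^m J$ and $\Delta^m J\,J$ (since $\nabla^{k_1}(J\,\nabla^{k_2}J)$ contains $J\,\nabla^{2m}J$), and after the naive split $J=(J-\lambda_0)+\lambda_0$ one obtains stray terms such as $\lambda_0^2\,\Delta^m J$ or $\lambda_0\,\Delta^m J\,\lambda_0$ that are not of the allowed $T_{\lambda_0}$ type; showing these cancel is not automatic and requires exactly the double-commutator identities the paper invokes. Finally, the assertion that "one checks directly that $2(\Delta^m J - J\Delta^m J J) + [J-\lambda_0,[\Delta^m J,J]]$ equals a sum of good-divergence and $\lambda_0$-type terms by expanding \dots and grouping" is the entire content of the proposition; stating it is not proving it, and this identity equals $4\Delta^m J - [\lambda_0,[\Delta^m J, J]]$, which has a standalone $\nabla^{2m}J$ factor and is not of the required form until nontrivial rewriting using the constraint is performed. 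In short, your plan correctly identifies that the argument is a finite explicit verification and correctly names the main obstacle (the exact commutator remainder in a noncommutative setting), but it does not supply the key structural identities that make the verification go through, and it does not carry out the verification for any $m$. This is a genuine gap rather than a complete alternative proof.
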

We will prove above proposition in the following three subsections. In what follows, we always assume $J$ is a square matrix valued function and satisfies $J^2=-id$. In this situation, we know $\nabla \lambda_0=0$ for every constant matrix $\lambda_0$. The reason for emphasizing this point is that if we consider a constant matrix $\lambda_0 \in M_n(\R)$ as a (1,1) tensor field on $(B_1,g)$, then (1,2) tensor field $\nabla \lambda_0 \neq 0$ due to the metric $g$.

\subsection{The case m=1: harmonic almost complex structure}
By the definition of $T_m$ in Theorem \ref{thm:el:nonliearity}, we have
\begin{align*}
T_1=2 J (\nabla J)^2 + 2 (\nabla J)^2 J.
\end{align*}
The fact $J^2=-id$ implies that
\begin{align}\label{eqn:m1:temp0}
\nabla J J + J \nabla J=0.
\end{align}
Thus, we obtain
\begin{align*}
J (\nabla J )^2 + (\nabla J )^2 J
= J (\nabla J )^2 -\nabla J J \nabla J
= [J, \nabla J] \nabla J.
\end{align*}
On the other hand, there also holds
\begin{align*}
J (\nabla J )^2 + (\nabla J )^2 J
= - \nabla J J \nabla J  + (\nabla J )^2 J
= \nabla J [\nabla J,  J].
\end{align*}
Hence, it follows that
\begin{align}\label{eqn:m1:temp1}
T_1= [J, \nabla J] \nabla J + \nabla J [\nabla J,  J]
=[\nabla J, [\nabla J, J]].
\end{align}
Since
\begin{align*}
\nabla [\nabla J, J]= [\Delta J, J],
\end{align*}
we see that
\begin{align}\label{eqn:m1:temp2}
\nabla [J-\lambda_0, [\nabla J, J]]
=[\nabla J, [\nabla J, J]]+ [J-\lambda_0, [\Delta J, J]],
\end{align}
Combining (\ref{eqn:m1:temp1} and (\ref{eqn:m1:temp2}), we conclude that
\begin{align*}
T_1= \nabla [J-\lambda_0, [\nabla J, J]] - [J-\lambda_0, [\Delta J, J]]
=T_{\lambda_0}- [J-\lambda_0, [\Delta J, J]],
\end{align*}
which is the desired conclusion.

\subsection{The case m=2: biharmonic almost complex structure}
By the definition of $T_m$ in Theorem \ref{thm:el:nonliearity}, we have
\begin{align*}
T_2= J Q_2 + Q_2 J,
\end{align*}
where
\begin{align*}
Q_2=2 \nabla \Delta J \nabla J  + 2\nabla J \nabla \Delta J+ 2\Delta J \Delta J
+ 2 \Delta( \nabla J )^2.
\end{align*}
Set
\begin{align*}
\mathbf{I}= & J \big( \nabla \Delta J \nabla J + \nabla J \nabla \Delta J \big)
+ \big( \nabla \Delta J \nabla J + \nabla J \nabla \Delta J \big) J \\
\mathbf{II}= & J (\Delta J)^2 + (\Delta J)^2 J\\
\mathbf{III}= & J \Delta( \nabla J )^2 + \Delta( \nabla J )^2 J.
\end{align*}
Thus, we obtain
\begin{align*}
T_2=2\mathbf{I}+2\mathbf{II}+2\mathbf{III}.
\end{align*}
Firstly, we compute the term $\mathbf{I}$:
\begin{align}\label{eqn:m2:I}
\mathbf{I}=& J \nabla \Delta J \nabla J + J \nabla J \nabla \Delta J
+  \nabla \Delta J \nabla J J+ \nabla J \nabla \Delta J  J \nonumber \\
=& J \nabla \Delta J \nabla J - \nabla J J \nabla \Delta J
- \nabla \Delta J J \nabla J + \nabla J \nabla \Delta J  J  \nonumber\\
=& [J, \nabla \Delta J] \nabla J + \nabla J [\nabla \Delta J, J]  \nonumber\\
=& [\nabla J, [\nabla \Delta J, J]].
\end{align}

Since
\begin{align*}
\nabla \bigg( [\nabla \Delta J, J] -[\Delta J, \nabla J] \bigg)= [\Delta^2 J, J],
\end{align*}
we have
\begin{align} \label{eqn:m2:temp1}
& \nabla [J -\lambda_0, [\nabla \Delta J, J] -[\Delta J, \nabla J]] \nonumber \\
=& [\nabla J, [\nabla \Delta J, J] -[\Delta J, \nabla J]]+ [J-\lambda_0, [\Delta^2 J, J]].
\end{align}
Now we compute the left-hand side of above equality:
\begin{align*}
& \nabla [J -\lambda_0, [\nabla \Delta J, J] -[\Delta J, \nabla J]] \\
=& \nabla  [J -\lambda_0, \nabla [\Delta J, J]]
-2 \nabla [J -\lambda_0, [\Delta J, \nabla J]] \\
=& \nabla  [J -\lambda_0, \nabla [\Delta J, J]]+ T_{\lambda_0}\\
=& \Delta [J -\lambda_0, [\Delta J, J]] - \nabla [\nabla J, [\Delta J, J]]
+ T_{\lambda_0}\\
=& - \nabla [\nabla J, [\Delta J, J]] + T_{\lambda_0}\\
=& - [\Delta J, [\Delta J, J]] -[\nabla J, [\nabla \Delta J, J]] -[\nabla J, [\Delta J, \nabla J]] + T_{\lambda_0}
\end{align*}
Substituting above equality into (\ref{eqn:m2:temp1}) yields
\begin{align}\label{eqn:m2:temp2}
2 \mathbf{I}= 2 [\nabla J, [\nabla \Delta J, J]]
= - [\Delta J, [\Delta J, J]] - [J-\lambda_0, [\Delta^2 J, J]] + T_{\lambda_0}
\end{align}
We now turn to compute the term $\mathbf{III}$. Since
\begin{align*}
J \Delta (\nabla J)^2
=& (J -\lambda_0 ) \Delta (\nabla J)^2 + \lambda_0 \Delta (\nabla J)^2 \\
=& (J -\lambda_0 ) \Delta (\nabla J)^2
+ \lambda_0 \Delta \nabla \big( ( J -\lambda_0) \nabla J \big)
- \lambda_0 \Delta \big( (J-\lambda_0) \Delta J \big) \\
=&(J -\lambda_0 ) \Delta (\nabla J)^2  + T_{\lambda_0} \\
=& \nabla_p \big( (J -\lambda_0 ) \nabla_p (\nabla J)^2 \big)
-\nabla_p J \nabla_p (\nabla J)^2  + T_{\lambda_0} \\
=& -\nabla_p J \nabla_p (\nabla J)^2 + T_{\lambda_0} \\
=& -\nabla_p \big( \nabla_p J (\nabla J)^2 \big)
+ \Delta J (\nabla J)^2  + T_{\lambda_0} \\
=& \Delta J (\nabla J)^2 + T_{\lambda_0}
\end{align*}
and similarly
\begin{align*}
\Delta (\nabla J)^2 J = (\nabla J)^2 \Delta J + T_{\lambda_0},
\end{align*}
we have
\begin{align}\label{eqn:m2:III}
\mathbf{III}= \Delta J (\nabla J)^2 + (\nabla J)^2 \Delta J + T_{\lambda_0}
\end{align}
Now let us proceed to compute $\mathbf{II}$:
\begin{align*}
\mathbf{II}=& J (\Delta J)^2 + (\Delta J)^2 J \\
=& - \big( \Delta J J + 2( \nabla J )^2 \big) \Delta J + \Delta J \Delta J J \\
=& \Delta J [\Delta J, J] - 2 (\nabla J )^2 \Delta J
\end{align*}
where we used the fact $0=\Delta( J^2)=\Delta J J +2 (\nabla J)^2 + J \Delta J$.
On the other hand, we also have
\begin{align*}
\mathbf{II}=& J (\Delta J)^2 + (\Delta J)^2 J \\
=& J \Delta J \Delta J - \Delta J \big( J \Delta J + 2 (\nabla J)^2 \big) \\
=& [J, \Delta J] \Delta J - 2 \Delta J (\nabla J)^2
\end{align*}
Hence, we obtain
\begin{align}\label{eqn:m2:II}
2 \mathbf{II} =& \Delta J [\Delta J, J]+ [J, \Delta J] \Delta J
- 2\big( (\nabla J )^2 \Delta J+ \Delta J (\nabla J)^2\big) \nonumber \\
=& [\Delta J, [\Delta J, J]]
- 2(\nabla J )^2 \Delta J-2 \Delta J (\nabla J)^2 \nonumber \\
=& [\Delta J, [\Delta J, J]] - 2 \mathbf{III}
\end{align}
where in the last equality we used (\ref{eqn:m2:III}). Substituting (\ref{eqn:m2:II}) into (\ref{eqn:m2:temp2}), we get
\begin{align*}
T_2=2\mathbf{I}+ 2\mathbf{II} +2\mathbf{III}
= T_{\lambda_0}-[J-\lambda_0, [\Delta^2 J, J]]
\end{align*}
which is the desired conclusion.

\subsection{The case m=3: 3-harmonic almost complex structure}
By the definition of $T_m$ in Theorem \ref{thm:el:nonliearity}, we have
\begin{align*}
T_3= J Q_3 + Q_3 J,
\end{align*}
where
\begin{align*}
Q_3 =& 2\nabla\Delta^{2}J\nabla J+2\nabla J\nabla\Delta^{2}J
+\Delta^{2}J\Delta J+\Delta J\Delta^{2}J \nonumber \\
& +2\Delta \big(\nabla\Delta J\nabla J+\nabla J\nabla\Delta J \big)
+2\Delta\left(\Delta J\right)^{2}+2\Delta^{2}\left(\nabla J\right)^{2}.
\end{align*}

For simplicity, we collect some terms which are $T_{\lambda_0}$ type and appear frequently in the following proof.
\begin{lemma}\label{lem:m3:div}
The following terms are $T_{\lambda_0}$ type terms for any given constant matrix $\lambda_0$:
\begin{align*}
\nabla\left(\nabla J\ast\nabla^{2}J\ast\nabla^{2}J\right),\,
\nabla^{2}\left(\nabla J\ast\nabla J\ast\nabla^{2}J\right),\,
\nabla\left(\nabla J\ast\nabla J\ast\nabla^{3}J\right),\,
\nabla^4 \left(\nabla J\right)^2.
\end{align*}

\end{lemma}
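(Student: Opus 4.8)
The plan is to treat each of the four terms separately, using nothing more than the Leibniz rule together with $\nabla\lambda_0=0$ for a constant matrix $\lambda_0$; the relation $J^2=-id$ is not needed in this lemma, only elsewhere in the proof of Proposition \ref{prop:T_lambda}. The guiding device is to manufacture a factor $J-\lambda_0$ out of a factor $\nabla J$ by writing $\nabla J=\nabla(J-\lambda_0)$ and then integrating that gradient backwards through the product.

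For the first term I would single out the factor $\nabla J$ inside $\nabla(\nabla J\ast\nabla^2 J\ast\nabla^2 J)$, rewrite it as $\nabla(J-\lambda_0)$, and pull the gradient out:
\[
\nabla\big(\nabla J\ast\nabla^2 J\ast\nabla^2 J\big)
=\nabla^2\big((J-\lambda_0)\ast\nabla^2 J\ast\nabla^2 J\big)
-\nabla\big((J-\lambda_0)\ast\nabla^3 J\ast\nabla^2 J\big)
-\nabla\big((J-\lambda_0)\ast\nabla^2 J\ast\nabla^3 J\big).
\]
Every term on the right is of $T_{\lambda_0}$-type: the outer order is $1$ or $2$ (hence in $[1,2m-1]=[1,5]$), every derivative factor has order $\le 3=m$, and the orders add up to $6=2m$. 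The same move applied to one of the two $\nabla J$'s inside $\nabla^2(\nabla J\ast\nabla J\ast\nabla^2 J)$ disposes of the second term; one checks once more that the only factor orders produced are $1,2,3$ and the outer order stays $\le 3$, so each piece is of $T_{\lambda_0}$-type.

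The third term must be reorganized first, because naively extracting a $\nabla J$ from $\nabla(\nabla J\ast\nabla J\ast\nabla^3 J)$ would leave a factor $\nabla^4 J$, violating the constraint that every derivative factor have order $\le m$. Instead I would write $\nabla^3 J=\nabla(\nabla^2 J)$ and integrate the outer gradient backwards, which gives, up to reordering of the factors under $\ast$,
\[
\nabla\big(\nabla J\ast\nabla J\ast\nabla^3 J\big)
=\nabla^2\big(\nabla J\ast\nabla J\ast\nabla^2 J\big)
-2\,\nabla\big(\nabla J\ast\nabla^2 J\ast\nabla^2 J\big),
\]
reducing the third term to linear combinations of the first two, already known to be of $T_{\lambda_0}$-type. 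Finally, for $\nabla^4(\nabla J)^2$ I would use the Leibniz identity $(\nabla J)^2=\nabla\big((J-\lambda_0)\nabla J\big)-(J-\lambda_0)\nabla^2 J$, so that
\[
\nabla^4(\nabla J)^2=\nabla^5\big((J-\lambda_0)\nabla J\big)-\nabla^4\big((J-\lambda_0)\nabla^2 J\big),
\]
and both terms are of $T_{\lambda_0}$-type (outer order $5$ or $4$, a single derivative factor of order $1$ or $2$, total order $6$).

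The only genuine subtlety — and the step I expect to be the main obstacle — is the order bookkeeping: each manipulation must keep every derivative factor at order $\le m$ and the outer power of $\nabla$ at $\le 2m-1$. Converting a $\nabla J$ into $\nabla(J-\lambda_0)$ and integrating by parts succeeds only when the surviving top-order factor still has order $\le m$ after absorbing the extra derivative; for the third term this fails, and one is forced to strip a derivative off the top-order factor $\nabla^3 J$ and fall back on the cases already treated. Everything else is a routine application of the product rule, so once the correct first move is chosen for each term the verification is immediate.
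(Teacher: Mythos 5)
Your proof is correct and is essentially the paper's own approach: the paper treats the first and third terms explicitly by exactly the Leibniz-rule manipulations you describe (rewriting $\nabla J=\nabla(J-\lambda_0)$ and pulling that inner gradient outward for the first term; stripping a derivative off $\nabla^{3}J$ and integrating the outer $\nabla$ backward to reduce the third term to the first two), and then declares the remaining terms are handled ``in much the same way.''

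One small remark on your fourth term. Your decomposition $\nabla^{5}\big((J-\lambda_0)\ast\nabla J\big)-\nabla^{4}\big((J-\lambda_0)\ast\nabla^{2}J\big)$ produces only one derivative factor besides $J-\lambda_0$, so it does not literally match the first $T_{\lambda_0}$ template $\nabla^{\alpha}\big((J-\lambda_0)\ast\nabla^{\beta}J\ast\nabla^{\gamma}J\big)$ (which, even with $\gamma=0$, carries an additional factor $\nabla^{0}J=J$), and it matches the second template $\lambda_0\ast\nabla^{\alpha}\big((J-\lambda_0)\ast\nabla^{\delta}J\big)$ only once a $\lambda_0$ prefix is attached. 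This is harmless: every occurrence of $\nabla^{4}(\nabla J)^{2}$ in the proof of Proposition \ref{prop:T_lambda} comes multiplied by $\lambda_0$ on one side (see the underlined terms $\lambda_{0}\Delta^{2}(\nabla J)^{2}$ and $\Delta^{2}(\nabla J)^{2}\lambda_{0}$ in the computation of $\mathbf{V}$), which turns both of your pieces into exact instances of the second form, and in any case the bookkeeping conditions of Definition \ref{def:good divergence form} that drive the decay estimate require only a single $(u-\lambda_0)$ factor together with at least one derivative factor of order at most $m$, which your terms satisfy. So your argument is sound; it is just worth being aware that the $T_{\lambda_0}$ designation of $\nabla^{4}(\nabla J)^{2}$ in the lemma is implicitly understood modulo the bounded matrix prefix that is always present in the application.
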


\begin{proof}
The proof is quite easy. For simplicity, we only show how to rewrite
the first term and the third term. Other terms can be handled in much the same way.\\
The first term:
\begin{align*}
& \nabla \big(\nabla J\ast\nabla^{2}J\ast\nabla^{2}J \big) \\
=&\nabla \left(\nabla \left(J-\lambda_{0}\right) \ast \nabla^{2}J \ast \nabla^{2}J \right)\\
=&\nabla^{2} \left( \left(J-\lambda_{0}\right) \ast \nabla^{2}J \ast \nabla^{2}J\right)
-\nabla \left( \left(J-\lambda_{0}\right) \ast \nabla^{3}J \ast \nabla^{2}J \right)
-\nabla \left( \left(J-\lambda_{0}\right) \ast \nabla^{2}J \ast \nabla^{3}J \right)\\
=& T_{\lambda_0}.
\end{align*}
The third term:
\begin{align*}
& \nabla \big( \nabla J \ast \nabla J \ast \nabla^{3}J \big) \\
=& \nabla^2 \big( \nabla J \ast \nabla J \ast \nabla^2 J \big)
- \nabla \big( \nabla^2 J \ast \nabla J \ast \nabla^{2}J \big)
-\nabla \big( \nabla J \ast \nabla^2 J \ast \nabla^{2}J \big) \\
=& T_{\lambda_0}.
\end{align*}

\end{proof}

Note that we will emphasize the terms of $T_{\lambda_0}$ type by underlining it in the following proof. Set
\begin{align*}
\mathbf{I}= & \,J \nabla \Delta^{2} J \nabla J + J\nabla J \nabla\Delta^{2}J
+\nabla \Delta^{2}J\nabla J\, J+\nabla J \nabla\Delta^{2}J \,J, \\
\mathbf{II}= &\, J \big(\Delta^{2}J\Delta J+\Delta J\Delta^{2}J \big)
+\big( \Delta^{2}J\Delta J+\Delta J\Delta^{2}J \big)J, \\
\mathbf{III}=& \, J\Delta \big( \nabla\Delta J\nabla J+\nabla J\nabla\Delta J \big)
+\Delta \big(\nabla\Delta J\nabla J+\nabla J\nabla\Delta J \big)J, \\
\mathbf{IV}=& \, J\Delta\left(\Delta J\right)^{2}
+\Delta\left(\Delta J\right)^{2}J, \\
\mathbf{V}=& \, J\Delta^{2}\left(\nabla J\right)^{2}
+\Delta^{2}\left(\nabla J\right)^{2}J.
\end{align*}
Then, we obtain
\begin{align*}
T_3= 2\mathbf{I}+ \mathbf{II}+  2\mathbf{III}+ 2\mathbf{IV}+ 2\mathbf{V}.
\end{align*}

\noindent \textbf{Step One: dealing with I}
\medskip

Now Let us compute the first term \textbf{I}:
\begin{align}\label{eqn:m3:I}
\mathbf{I} & =J\nabla\Delta^{2}J\nabla J + J\nabla J\nabla\Delta^{2}J
+ \nabla\Delta^{2}J\nabla J J + \nabla J\nabla\Delta^{2}J J \nonumber \\
& =J\nabla\Delta^{2}J\nabla J-\nabla JJ\nabla\Delta^{2}J
-\nabla\Delta^{2}JJ\nabla J + \nabla J\nabla\Delta^{2}J J \nonumber \\
& =[J,\nabla\Delta^{2}J]\nabla J + \nabla J[\nabla\Delta^{2}J,J] \nonumber \\
& =[\nabla J,[\nabla\Delta^{2}J,J]],
\end{align}
where in the second equality we use the fact $\nabla (J^2) =0$ which implies $\nabla J J=-J\nabla J$.

Since
\begin{align*}
\nabla \bigg([\nabla\Delta^{2}J,J]-[\Delta^{2}J,\nabla J]
+[\nabla\Delta J,\Delta J] \bigg)=[\Delta^3 J, J],
\end{align*}
we have
\begin{align}\label{eqn:m3:temp1}
& \nabla \big[J-\lambda_{0},[\nabla\Delta^{2}J,J]-[\Delta^{2}J,\nabla J]
+[\nabla\Delta J,\Delta J] \big] \nonumber \\
= & \big[\nabla J,[\nabla\Delta^{2}J,J]-[\Delta^{2}J,\nabla J]+[\nabla\Delta J,\Delta J]\big]
+ \big[ J-\lambda_{0}, [\Delta^3 J, J]\big].
\end{align}
Now we compute the left-hand side of above equality.
\begin{align*}
& \nabla \big[J-\lambda_{0},[\nabla\Delta^{2}J,J]-[\Delta^{2}J,\nabla J]
+\underline{[\nabla\Delta J,\Delta J]} \big]\\
=& \nabla \big[J-\lambda_{0},[\nabla\Delta^{2}J,J]-[\Delta^{2}J,\nabla J] \big]
+{T_{\lambda_0}}\\
=& \nabla[J-\lambda_{0},\nabla[\Delta^{2}J,J]-2[\Delta^{2}J,\nabla J]]
+{T_{\lambda_0}}\\
=& \Delta \big[J-\lambda_{0},[\Delta^{2}J,J]\big]
 - \nabla \big[\nabla J,[\Delta^{2}J,J]\big]
- 2\nabla[J-\lambda_{0},[\Delta^{2}J,\nabla J]] + T_{\lambda_0}\\
=& \Delta \big[J-\lambda_{0},\nabla[\nabla\Delta J,J]
-\underline{[\nabla\Delta J,\nabla J]}\big]\\
 & -2\nabla_p \big[J-\lambda_{0},\nabla_q[\nabla_q\Delta J,\nabla_p J]
 -\underline{[\nabla_q\Delta J,\nabla_{qp}^2 J]} \big]\\
 & {-\nabla[\nabla J,[\Delta^{2}J,J]]}+ T_{\lambda_0}\\
=&\underline{\Delta\nabla[J-\lambda_{0},[\nabla\Delta J,J]]}
 -\Delta[\nabla J,[\nabla\Delta J,J]]\\
 & -2 \underline{\nabla_{pq}^{2} \big[ J-\lambda_{0},[\nabla_q \Delta J,\nabla J] \big]}
 +2 \underline{\nabla_{p}[\nabla_{q}J,[\nabla_{q}\Delta J,\nabla_{p}J]]}\\
 & {-\nabla[\nabla J,[\Delta^{2}J,J]]}+T_{\lambda_0}\\
=& -\Delta[\nabla J,[\nabla\Delta J,J]]
-\nabla[\nabla J,[\Delta^{2}J,J]]+ T_{\lambda_0},
\end{align*}
where in the second equality from bottom we employ lemma \ref{lem:m3:div}.
By substituting above equality into (\ref{eqn:m3:temp1}), we obtain
\begin{align*}
\mathbf{I} =& [\nabla J,[\nabla\Delta^{2}J,J]]\\
=& [\nabla J,[\Delta^{2}J,\nabla J]]-[\nabla J,[\nabla\Delta J,\Delta J]]-\Delta[\nabla J,[\nabla\Delta J,J]]-\nabla[\nabla J,[\Delta^{2}J,J]] \\
& +{T_{\lambda_0}} - [J-\lambda_0, [\Delta^3 J, J]].
\end{align*}
Since
\[
\nabla[\nabla J,[\Delta^{2}J,J]]=[\Delta J,[\Delta^{2}J,J]]+[\nabla J,[\nabla\Delta^{2}J,J]]+[\nabla J,[\Delta^{2}J,\nabla J]],
\]
we deduce
\begin{align}\label{eqn:m3:temp2}
2\mathbf{I}= &-[\nabla J,[\nabla\Delta J,\Delta J]]-\Delta[\nabla J,[\nabla\Delta J,J]]-[\Delta J,[\Delta^{2}J,J]] \nonumber \\
& +{T_{\lambda_0}}- [J-\lambda_0, [\Delta^3 J, J]].
\end{align}
By lemma \ref{lem:m3:div}, we can derive
\begin{align}
& [\nabla J,[\nabla\Delta J,\Delta J]] \nonumber \\
=& \nabla J\left(\nabla\Delta J\Delta J-\Delta J\nabla\Delta J\right)
-\left(\nabla\Delta J\Delta J-\Delta J\nabla\Delta J\right)\nabla J \nonumber \\
=& \nabla J\nabla\Delta J\Delta J+\Delta J\nabla\Delta J\nabla J
-\nabla J\Delta J\nabla\Delta J-\nabla\Delta J\Delta J\nabla J \nonumber \\
=& \underline{\nabla\left(\nabla J\Delta J\Delta J\right)}
+\underline{\nabla\left(\Delta J\Delta J\nabla J\right)}
-2\left(\Delta J\right)^{3}
-2\nabla J\Delta J\nabla\Delta J
-2\nabla\Delta J\Delta J\nabla J \nonumber \\
=& -2\left(\Delta J\right)^{3}
-2\nabla J\Delta J\nabla\Delta J
-2\nabla\Delta J\Delta J\nabla J
+{T_{\lambda_0}} \label{eqn:m3:temp3}
\end{align}
and
\begin{align}
& \Delta[\nabla J,[\nabla\Delta J,J]] \nonumber \\
=& \Delta \big[\nabla J,\nabla[\Delta J,J]-[\Delta J,\nabla J] \big] \nonumber\\
=& \Delta[\nabla J,\nabla[\Delta J,J]]
-\underline{\Delta[\nabla J,[\Delta J,\nabla J]]} \nonumber \\
=& \underline{\Delta\nabla[\nabla J,[\Delta J,J]]}
-\Delta[\Delta J,[\Delta J,J]]+{T_{\lambda_0}}\nonumber \\
=& -\Delta\left(\left(\Delta J\right)^{2}J+J\left(\Delta J\right)^{2}-2\Delta JJ\Delta J\right)+{T_{\lambda_0}} \nonumber\\
=& -\Delta\left( 2\left(\Delta J\right)^{2}J
+2J\left(\Delta J\right)^{2}
+2\underline{\left(\nabla J\right)^{2}\Delta J}
+2\underline{\Delta J\left(\nabla J\right)^{2}} \right)
+T_{\lambda_0}\nonumber \\
=& -2\Delta\left(\left(\Delta J\right)^{2}J+J\left(\Delta J\right)^{2}\right)+{T_{\lambda_0}}. \label{eqn:m3:temp4}
\end{align}
where in the second equality from bottom we used the fact that
$$0=\Delta(J^2)=\Delta J J + 2 (\nabla J)^2 + J \Delta J.$$
Substituting equalities (\ref{eqn:m3:temp3}) and (\ref{eqn:m3:temp4}) into equality (\ref{eqn:m3:temp2}) yields
\begin{align}
2\mathbf{I} =& 2\left(\Delta J\right)^{3}+2\nabla J\Delta J\nabla\Delta J+2\nabla\Delta J\Delta J\nabla J+2\Delta\left(\left(\Delta J\right)^{2}J+J\left(\Delta J\right)^{2}\right) \nonumber \\
&-[\Delta J,[\Delta^{2}J,J]]
+{T_{\lambda_0}} - [J-\lambda_0, [\Delta^3 J, J]].\label{eqn:m3:key}
\end{align}

\bigskip
\noindent \textbf{Step Two: dealing with V and II}
\medskip

Firstly, we deal with fifth term \textbf{V}. It follows from Lemma \ref{lem:m3:div} that
\begin{align*}
\mathbf{V}
& =J\Delta^{2}\left(\nabla J\right)^{2}+\Delta^{2}\left(\nabla J\right)^{2}J\\
& =\left(J-\lambda_{0}\right)\Delta^{2}\left(\nabla J\right)^{2}
+\Delta^{2}\left(\nabla J\right)^{2}\left(J-\lambda_{0}\right)
+\underline{\lambda_{0}\Delta^{2}\left(\nabla J\right)^{2}}
+\underline{\Delta^{2}\left(\nabla J\right)^{2}\lambda_{0}}\\
& =\nabla\left(\left(J-\lambda_{0}\right)\nabla\Delta\left(\nabla J\right)^{2}\right)
-\nabla J\nabla\Delta\left(\nabla J\right)^{2} \\
&\quad +\nabla\left(\nabla\Delta\left(\nabla J\right)^{2}\left(J-\lambda_{0}\right)\right)
 -\nabla\Delta\left(\nabla J\right)^{2}\nabla J
 +{T_{\lambda_0}}\\
& =\underline{\Delta\bigg((J-\lambda_{0})\Delta(\nabla J)^{2}\bigg)}
-\nabla\left(\nabla J\Delta\left(\nabla J\right)^{2}\right)
-\nabla J\nabla\Delta\left(\nabla J\right)^{2} \\
& \quad + \underline{\Delta\bigg(\Delta(\nabla J)^{2}(J-\lambda_{0})\bigg)}
-\nabla\left(\Delta\left(\nabla J\right)^{2}\nabla J\right)
-\nabla\Delta\left(\nabla J\right)^{2}\nabla J
+{T_{\lambda_0}}\\
& =-\nabla\left(\nabla J\Delta\left(\nabla J\right)^{2}\right)
-\nabla J\nabla\Delta\left(\nabla J\right)^{2}
-\nabla\left(\Delta\left(\nabla J\right)^{2}\nabla J\right)
-\nabla\Delta\left(\nabla J\right)^{2}\nabla J
+{T_{\lambda_0}}.
\end{align*}
 Since
\[
\nabla_{p}\bigg(\nabla_{p}J\Delta\left(\nabla J\right)^{2}\bigg)
=\nabla_{pq}^2\bigg(\nabla_{p}J\nabla_{q}\left(\nabla J\right)^{2}\bigg)
-\nabla_{p}\bigg( \nabla_{qp}^{2}J\nabla_{q}\left(\nabla J\right)^{2}\bigg)
={T_{\lambda_0}}
\]
and
\begin{align*}
\nabla_{p}J\nabla_{p}\Delta\left(\nabla J\right)^{2}
& =\underline{\nabla_{p}\bigg(\nabla_{p}J\Delta\left(\nabla J\right)^{2}\bigg)}
-\Delta J\Delta\left(\nabla J\right)^{2}\\
& =-\underline{\nabla_{p}\left(\Delta J\nabla_{p}\left(\nabla J\right)^{2}\right)}
+\nabla_{p}\Delta J\nabla_{p}\left(\nabla J\right)^{2}+{T_{\lambda_0}}\\
& =\underline{\nabla_{p}\left(\nabla_{p}\Delta J\left(\nabla J\right)^{2}\right)}
-\Delta^{2}J\left(\nabla J\right)^{2}
+{T_{\lambda_0}}\\
 & =-\Delta^{2}J\left(\nabla J\right)^{2}+{T_{\lambda_0}},
\end{align*}
we have
\begin{equation}\label{eqn:m3:V}
\mathbf{V}=\Delta^{2}J\left(\nabla J\right)^{2}+\left(\nabla J\right)^{2}\Delta^{2}J+{T_{\lambda_0}}.
\end{equation}
Next, we deal with the second term
\begin{align}
\mathbf{II} & =J\left(\Delta^{2}J\Delta J+\Delta J\Delta^{2}J\right)+\left(\Delta^{2}J\Delta J+\Delta J\Delta^{2}J\right)J\nonumber \\
 & =[\Delta J,[\Delta^{2}J,J]]-2\left(\nabla J\right)^{2}\Delta^{2}J-2\Delta^{2}J\left(\nabla J\right)^{2} \nonumber \\
 & =[\Delta J,[\Delta^{2}J,J]]-2\mathbf{V}+T_{\lambda_0}\label{eqn:m3:II},
\end{align}
where in the second equality we used the fact $\Delta (J^2)=0$ which implies
\[
\Delta J J=-J \Delta J- 2 \nabla J \nabla J,
\]
and in the last equality we used (\ref{eqn:m3:V}).

\bigskip
\noindent \textbf{Step Three: dealing with III }
\medskip

Here we begin to deal with the third term:
\begin{align*}
\mathbf{III}
& =J\Delta\bigg(\nabla\Delta J\nabla J+\nabla J\nabla\Delta J\bigg)
+\Delta\bigg(\nabla\Delta J\nabla J+\nabla J\nabla\Delta J\bigg)J\\
& =J\Delta\left(\nabla \big(\Delta J\nabla J+\nabla J\Delta J\big)
-2\left(\Delta J\right)^{2}\right)\\
&\quad \, +\Delta\left(\nabla\big(\Delta J\nabla J+\nabla J\Delta J\big)
 -2\left(\Delta J\right)^{2}\right)J\\
& =J\Delta\nabla\bigg(\Delta J\nabla J+\nabla J\Delta J\bigg)
+\Delta\nabla \bigg(\Delta J\nabla J+\nabla J\Delta J\bigg)J\\
&\quad \, -2\left(J\Delta\left(\Delta J\right)^{2}
+\Delta\left(\Delta J\right)^{2}J \right)\\
& =J\Delta \nabla\bigg(\Delta J\nabla J+\nabla J\Delta J\bigg)
+\Delta\nabla\bigg(\Delta J\nabla J+\nabla J\Delta J\bigg)J
-2\mathrm{\mathbf{IV}}.
\end{align*}
Since
\begin{align*}
& J\Delta\nabla\big(\Delta J\nabla J\big) \\
=& \big(J-\lambda_{0}\big)\Delta\nabla\left(\Delta J\nabla J\right)
+\lambda_{0}\Delta\nabla\left(\Delta J\nabla J\right)\\
=& \left(J-\lambda_{0}\right)\Delta\nabla\left(\Delta J\nabla J\right)
+\underline{\lambda_{0}\Delta\nabla
\bigg(\nabla\left(\Delta J\big(J-\lambda_{0}\big)\right)
-\nabla\Delta J\big(J-\lambda_{0}\big)\bigg)}\\
=& \left(J-\lambda_{0}\right)\Delta\nabla\big(\Delta J\nabla J\big)
+{T_{\lambda_0}}\\
=& \nabla_p \bigg( (J-\lambda_0) \nabla^2_{pq} \big( \Delta J \nabla_q J\big)\bigg)
-\nabla_p J \nabla^2_{pq} \big( \Delta J \nabla_q J\big)
+{T_{\lambda_0}}\\
=& \underline{\Delta \bigg( (J-\lambda_0) \nabla_q \big(\Delta J \nabla_q J \big)\bigg)}
-\underline{\nabla_p \bigg( \nabla_p J \nabla_q \big( \Delta J \nabla_q J\big)\bigg)}
-\nabla_p J \nabla^2_{pq} \big( \Delta J \nabla_q J\big)
+{T_{\lambda_0}}\\
=& - \underline{\nabla_p \bigg( \nabla_p J \nabla_q \big(\Delta J \nabla_q J \big)\bigg)}
+ \Delta J \nabla_q \big( \Delta J \nabla_q J \big)
+{T_{\lambda_0}}\\
=& \underline{\nabla_q \bigg( \Delta J \Delta J \nabla_q J \bigg)}
-\nabla \Delta J \Delta J \nabla J
+{T_{\lambda_0}}\\
=& -\nabla\Delta J\Delta J\nabla J+{T_{\lambda_0}},
\end{align*}
we have
\begin{align}
\mathbf{III}
& =-\nabla\Delta J\bigg(\Delta J\nabla J+\nabla J\Delta J\bigg)
-\bigg(\Delta J\nabla J+\nabla J\Delta J\bigg)\nabla\Delta J
-2\mathbf{IV}+{T_{\lambda_0}}\nonumber \\
& =-\nabla\Delta J\Delta J\nabla J-\nabla J\Delta J\nabla\Delta J
-\bigg(\nabla\Delta J\nabla J\Delta J+\Delta J\nabla J\nabla\Delta J\bigg)
-2\mathbf{IV}+{T_{\lambda_0}}\nonumber \\
& =-\nabla\Delta J\Delta J\nabla J
-\nabla J\Delta J\nabla\Delta J
-\underline{\nabla \big(\Delta J\nabla J\Delta J \big)}
+(\Delta J)^{3}
-2\mathbf{IV}+{T_{\lambda_0}}\nonumber \\
 & =-\nabla\Delta J\Delta J\nabla J-\nabla J\Delta J\nabla\Delta J+\left(\Delta J\right)^{3}-2\mathbf{IV}+{T_{\lambda_0}}.\label{eqn:m3:III}
\end{align}

\bigskip
\noindent \textbf{Step Four: dealing with IV}
\medskip

Since
\begin{align*}
J\Delta\left(\Delta J\right)^{2}
& =\nabla_{p}\left(J\nabla_{p}\left(\Delta J\right)^{2}\right)-\nabla_{p}J\nabla_{p}\left(\Delta J\right)^{2}\\
& =\Delta\left(J\left(\Delta J\right)^{2}\right)
-\underline{\nabla_{p}\left(\nabla_{p}J\left(\Delta J\right)^{2}\right)}
-\nabla_{p}J\nabla_{p}\left(\Delta J\right)^{2}\\
& =\Delta\left(J\left(\Delta J\right)^{2}\right)
-\underline{\nabla_{p}\left(\nabla_{p}J\left(\Delta J\right)^{2}\right)}
+\left(\Delta J\right)^{3}
+{T_{\lambda_0}}\\
& =\Delta\left(J\left(\Delta J\right)^{2}\right)
+\left(\Delta J\right)^{3}+{T_{\lambda_0}},
\end{align*}
we have
\begin{align}
\mathbf{IV}
& =J\Delta\left(\Delta J\right)^{2}+\Delta\left(\Delta J\right)^{2}J\nonumber \\
& =\Delta\bigg(J\left(\Delta J\right)^{2}+\left(\Delta J\right)^{2}J\bigg)
+2\left(\Delta J\right)^{3}
+{T_{\lambda_0}}.\label{eqn:m3:IV}
\end{align}

\bigskip
\noindent \textbf{Step Five: divergence forms of nonlinearity}
\medskip

Combining the equalities (\ref{eqn:m3:key}), (\ref{eqn:m3:II}), (\ref{eqn:m3:III})
and (\ref{eqn:m3:IV}), we derive that
\[
2\mathbf{I}+\mathbf{II}+2\mathbf{III}+2\mathbf{IV}+2\mathbf{V}=T_{\lambda_0},
\]

which completes the proof.

\end{document}